\numberwithin{equation}{section}
\theoremstyle{plain}
\newtheorem{thm}{Theorem}[section]
\newtheorem{Theorem}{Theorem}[section]
\newtheorem{Proposition}[thm]{Proposition}
\newtheorem{Lemma}[thm]{Lemma}
\newtheorem{Corollary}[thm]{Corollary}
\newtheorem{Assumption}[thm]{Assumption}
\newtheorem{Definition}[thm]{Definition}
\theoremstyle{remark}
\newcommand{\ee}{\mathbb E}
\newcommand{\pp}{\mathbb P}
\newcommand{\eps}{\varepsilon}
\newcommand\independent{\protect\mathpalette{\protect\independenT}{\perp}}
\def\independenT#1#2{\mathrel{\rlap{$#1#2$}\mkern2mu{#1#2}}}
\definecolor{jens}{rgb}{0,0.4,0.8}
\def\bR{\mathbb R}
\def\bE{\mathbb E}
\def\argmin{\mathop{\rm arg\,min}}
\def\rank{{\rm rank}}
\def\Var{\mathop{\rm Var}\nolimits}
\begin{document}
\title{Adaptive confidence sets for matrix completion}
% \author{{\it Alexandra Carpentier},\\  Institut f\"ur Mathematik, Universit\"at Potsdam\\
%{\it Olga Klopp},\\  CREST and MODAL'X, University Paris Ouest\\
%{\it Matthias L\"offler, } \\Statistical Laboratory, 
%Centre for Mathematical Sciences, University of Cambridge
%\\and\\
%{\it Richard Nickl,} \\ Statistical Laboratory, 
%Centre for Mathematical Sciences, University of Cambridge}
%  
\maketitle

\author{\begin{center}
Alexandra Carpentier, \textit{ Universit\"at Potsdam\footnote{Institut f\"ur Mathematik,  carpentier@maths.uni-potsdam.de}}\\ \vskip 0.25 cm Olga Klopp, \textit{ University Paris Ouest\footnote{MODAL’X,  kloppolga@math.cnrs.fr}}\\ \vskip 0.25 cm Matthias L\"offler and Richard Nickl, \textit{ University of Cambridge\footnote{Statistical Laboratory, 
 Centre for Mathematical Sciences, m.loffler@statslab.cam.ac.uk, r.nickl@statslab.cam.ac.uk}}\end{center} }

\maketitle
	\begin{abstract}
			In the present paper we study the problem of existence of honest and adaptive confidence sets for matrix completion. We consider two statistical models: the trace regression model and the Bernoulli model.
			%The construction of honest and adaptive confidence sets
			%, that is confidence sets with coverage probability of at least $1-\alpha$ for every element of the parameter space and minimax-optimal diameter across sub-spaces of the original parameter-space, 
			%in matrix completion models is studied.
			In the trace regression model, we show that  honest confidence sets that adapt to the unknown rank of the matrix exist even when the error variance is unknown. Contrary to this, we prove that in the Bernoulli model, honest and adaptive confidence sets exist only when the error variance is known a priori. In the course of our proofs we obtain bounds for the minimax rates of certain composite hypothesis testing problems arising in low rank inference.
				\end{abstract}
\smallskip
\noindent \textbf{Keywords.} Low rank recovery, confidence sets, adaptivity, matrix completion, unknown variance, minimax hypothesis testing.
\maketitle
\section{Introduction}
\noindent
In matrix completion we observe $n$ noisy entries of a  data matrix $M=(M_{ij})\in \mathbb{R}^{m_{1}\times m_{2}}$, and we aim at doing inference on $M$. In a typical situation of interest, $n$ is much smaller than  $m_1m_2$, the total number of entries. %We assume that the noise is homoscedastic with Var$(\eps_{ij})=\sigma^2$ and bounded, that is $ \max_{ij}| \eps_{ij}| \leq U$ for some $U > 0$.\\
%In matrix completion problems we observe a small number of noisy entries of a matrix  $M=(M_{ij})\in \mathbb{R}^{m_{1}\times m_{2}}$. % and we aim at making  inference on $M_0$. 
%The number of observed entries $n$ is much smaller than  $m_1m_2$ in a typical situation of interest. The hope to still recover $M$ is based on the assumption that $M$ may possess some low rank structure. %We assume that the noise is homoscedastic with Var$(\eps_{ij})=\sigma^2$ and bounded, that is $ \max_{ij}| \eps_{ij}| \leq U$ for some $U > 0$.\\
This problem arises in many applications such as recommender systems and collaborative filtering \cite{BennetLanning, Goldberg}, genomics \cite{Chi} or sensor localization \cite{Singer}.
%this fundamental statistical question, to the best of our knowledge, has %not  been addressed in  (noisy) matrix completion so far. 
Two statistical models have been proposed in the matrix completion literature: the \emph{trace-regression model} (e.g. \cite{CaiZhou, KloppTrace, Koltchinskiietal, NegahbanWainwright, RohdeTsybakov}  ) and the \emph{`Bernoulli model'} (e.g. \cite{CandesRecht, Chatterjee, KloppThresh}). 

\smallskip

In the \textit{trace-regression model} we observe $n$ pairs $(X_i,Y_i^{tr})$ satisfying
\begin{equation}\label{trace_model}
Y_{i}^{tr}=\left\langle X_i,M\right\rangle + \epsilon_i = \mathrm{tr}(X_{i}^{T}M)+\epsilon_{i},~~ \:i=1,\dots,n,
\end{equation} 
where $(\epsilon_{i})$ is a noise vector. The random matrices $X_{i}\in \mathbb{R}^{m_1\times m_2}$ are independent of the $\epsilon_i$'s, chosen uniformly at random from the set%having distribution $\Pi$ on the set
\begin{equation}\label{basisUSR}
\mathcal{B} = \left \{e_j(m_1)e_k^{T}(m_2),~1\leq j\leq m_1, ~1\leq k\leq m_2\right \},
\end{equation}
were the $e_j(s)$ are the canonical basis vectors of $\bR^{s}$. In this model $Y_i^{tr}$ returns the noisy value of the entry of $M$ corresponding to the random position $X_i$. 

\smallskip

In the \textit{Bernoulli model} each entry of $M+E$, where $E = (\epsilon_{ij}) \in \mathbb{R}^{m_1\times m_2}$ is a matrix of random errors, is observed independently of the other entries with probability $p=n / (m_1 m_2)$. More precisely, if $n \le m_1 m_2$ is given and $B_{ij}$ are i.i.d.~Bernoulli random variables  of parameter $p$ independent of the $\epsilon_{ij}$'s, we observe
%taking values in $\{0,1\}$ such that $\mathbb{P} (B_{ij}=1)=p$, we observe
\begin{equation} \label{Bernoulli_model}
Y_{ij}^{Ber}=B_{ij}\left (M_{ij}+\epsilon_{ij}\right),~~1 \le i \le m_1, 1 \le j \le m_2.
\end{equation}
The major difference between these models is that in the trace-regression model multiple sampling of a particular entry is possible whereas in the Bernoulli model each entry can be sampled at most once. A further difference is that in the trace regression model the number of observations, $n$, is fixed whereas in the  Bernoulli model the number of observations $\hat{n}:=\sum_{ij} B_{ij}$ is random with expectation $E \hat n = n$. Despite these differences, the results on minimax optimal recovery using computationally efficient algorithms for these two models in the literature are very similar and from a `parameter estimation' point of view the models appear to be effectively equivalent (see, e.g., \cite{CaiZhou, Candes_Tao, candes_plan_noise, candes_plan_tight,  Chatterjee, Gross, Keshavan, KloppTrace, Koltchinskiietal,NegahbanWainwright, recht}). \textit{A key insight of the present paper is that for the construction of optimal confidence sets, these models are in fact fundamentally different, at least when the noise variance $\sigma^2$ is unknown.}
%In fact the analysis in Keshavan et. al. \cite{Keshavan} for the Bernoulli model can be extended to hold for the trace-model, too. Moreover, we show that the algorithm proposed and analyzed for the trace-model by Koltchinskii. et. al.  \cite{Koltchinskiietal}  achieves the same minimax optimal rate in the Bernoulli model.

When investigating questions that go beyond mere `adaptive estimation' of a high-dimensional parameter, such as about the existence of adaptive confidence sets, one can expect to encounter surprising phenomena -- and various recent results (see e.g. \cite{ Baraud, BullNickl, CaiLow2004, GineNickl10, HoffmannNickl, JuditskyLambert, Low1997, SzaboNickl, NicklVandegeer, RobinsVDV, Szaboetal} and Chapter 8.3 in \cite{Ginenickl}) show that the answers depend on a rather subtle interaction of certain `information geometric' properties of the model -- the material relevant for the present paper is reviewed in Section \ref{GeometryConfidence}. Many of these results reveal limitations by showing that confidence regions that adapt to the whole parameter space do not exist unless one makes specific `signal strength' assumptions. For example, Low \cite{Low1997} and Gin\'e and Nickl \cite{GineNickl10} investigated this question in nonparametric density estimation and Nickl and van de Geer \cite{NicklVandegeer} in the sparse high-dimensional regression model. 

Next to the challenge of adaptation, the construction of confidence sets in the matrix completion setting is difficult mainly due to two reasons. Firstly, the Restricted Isometry Property (RIP) does not hold, requiring a more involved analysis than in a standard trace regression setting such as in \cite{CEGN15}.  Moreover, in most practical applications of matrix completion such as movie recommender systems \cite{BennetLanning, Goldberg} the variance of the errors is not known. Typical constructions of confidence sets in high-dimensions such as $\chi^2$-confidence sets (e.g. \cite{NicklVandegeer, CEGN15}) require explicit knowledge of the variance and are thus not feasible. Particularly in the `Bernoulli model', the problem of unknown variance can be expected to be potentially severe:  for the related standard normal means model (without low rank structure and without missing observations) Baraud \cite{Baraud} has shown  that in the unknown variance case honest confidence sets of shrinking diameter do not exist, even if the true model is low dimensional. Similarly,  in high-dimensional regression Cai and Guo \cite{CaiGuo} prove the impossibility of constructing adaptive confidence sets for the $l_q$-loss, $1 \le q \le 2,$ of adaptive estimators  if the variance is unknown. 

Our main contributions are as follows:  in the trace regression model, even if only an upper bound for the variance of the noise is known, it is shown that practical honest confidence sets exist that have Frobenius-norm diameter that adapts to the unknown rank of $M$. Contrary to this we prove that such confidence regions cannot exist in the Bernoulli model when the noise variance is unknown, and to complement our findings we also prove that in the Bernoulli model with \textit{known} variance, adaptive confidence regions \textit{do} exist. So while recovery algorithms for matrix completion are not sensitive to the choice of model, the task of uncertainty quantification for these algorithms \textit{is}, and crucially depends on the statistician's ability to estimate the noise variance. For the Bernoulli `normal means' model our results imply that the lack of availability of `repeated samples' induces an information-theoretic `barrier' for inference even in the presence of low rank structure.

This paper is organized as follows: in Subsection \ref{sec:not} we formulate the assumptions and collect notation which we use throughout the paper. Then, in Section \ref{GeometryConfidence}, we review and present general results about the existence of honest and adaptive confidence sets in terms of some information-theoretic quantities that determine the complexity of the adaptation problem at hand. Afterwards we review the literature on minimax estimation in matrix completion problems. In Section \ref{sec:trace} we give an explicit construction of honest and adaptive confidence sets in the trace-regression case, adapting a U-statistic approach inspired by Robins and van der Vaart \cite{RobinsVDV} (see also \cite{Ginenickl}, Section 6.4, and \cite{CEGN15}). Finally, we present our results for the Bernoulli model in Section \ref{sec:Bernoulli}. First, we derive an upper bound for the minimax rate of testing a low rank hypothesis and deduce from it the existence of honest and adaptive confidence regions in the known variance case. We then derive a lower bound for this testing rate in the unknown variance case, from which we can deduce that honest and adaptive confidence sets over the whole parameter space cannot exist in general. Sections \ref{sec:Proof1}-\ref{sec:Proofs2} contain the proofs of our results.
%In section 3 we recall the duality between testing and confidence sets %and give a proof how to construct an adaptive and honest confidence set %if the separation rate of testing is small enough and an oracle %estimator is available. We then review general results about minimax %rates of estimation in the two matrix completion models. In section 5 we %consider the trace regression model and construct adaptive and honest %confidence sets in both the known and unknown variance case, using a %U-statistic approach. Afterwards, in section 6,  employing the results %from section 3 we prove existence of adaptive and honest confidence sets %in the known variance case and disprove the existence in the unknown %variance case.

\subsection{Notation \& assumptions}\label{sec:not}
By construction, in the Bernoulli model (\ref{Bernoulli_model}) the expected number  of observations, $n$, is  smaller than the total number of matrix entries, i.e. $n\leq m_1m_2 $. To provide a meaningful comparison we will assume throughout that $n \le m_1 m_2$ also holds in the trace regression model (\ref{trace_model}).
%In the `Bernoulli model' (\ref{Bernoulli_model}) the effective number $n$ of observations is by construction smaller than the total number of matrix entries, i.e. $n\leq m_1m_2 $. To provide a meaningful comparison we shall assume throughout that $n \le m_1 m_2$ also holds in the trace regression model (\ref{trace_model}). \\
In many applications of matrix completion, such as recommender systems (e.g. \cite{BennetLanning, Goldberg}) or sensor localization (e.g. \cite{BiswasLiangWangYe, Singer}) the noise is bounded but not necessarily identically distributed. This is the assumption  which we adopt in the present paper. More precisely, we assume that the $\epsilon_{{\iota}}$ are independent random variables that are homoscedastic, have zero mean and are bounded:
\begin{Assumption}\label{noise_boundedTR}
	In the models (\ref{trace_model}) and (\ref{Bernoulli_model}) with index ${\iota}=i$ and ${\iota}=(i,j)$, respectively, we assume 	$\bE(\epsilon_{\iota})=0$, $\bE(\epsilon_{{\iota}}^{2})=\sigma^{2}$, $\epsilon_\iota \independent \epsilon_\eta$ for $\iota \neq \eta$ and that there exists a positive constant  $U>0$ such that almost surely
	\begin{equation*} 
	\underset{{\iota}}{\max}\left \vert\epsilon_{{\iota}}\right \vert\leq U.
	\end{equation*}
	
\end{Assumption} \noindent
%\rem{Most of our results (with exception of Theorem \ref{thm:main}), can be easily extended to the case of unbounded sub-Gaussian noise (with a possibly additional $\log$-factor).}
%\todo{This section changed quite a lot - there are amany notational issues coming from the fact that there are two models and we cannot use same letters in both I guess.}
We denote by $M=(M_{ij})\in \mathbb{R}^{m_{1}\times m_{2}}$ the unknown matrix of interest and define \begin{align*}  m& =\min(m_1,m_2), \\ d& =m_1+m_2. \end{align*} For any $l\in \mathbb{N}$ we set  $[l]=\{1,\dots,l \}$. %Furthermore we let $\mathcal X$ denote the observation operator that we will define in next .
Let $A,B$ be matrices in $\mathbb{R}^{m_{1}\times m_{2}}$.
%\begin{itemize}
%\item We assume that the noise variables $\epsilon_{i}$ are i.i.d. Gaussian,  with 
%  $\bE(\epsilon_i)=0$ and $\bE(\epsilon_i^{2})=\sigma^{2}$.
We define the {matrix scalar product} as
$\langle A,B\rangle :=\mathrm{tr}(A^{T}B)$.
The trace norm of the matrix $A$ is defined as  $\Vert A\Vert_{*}:=\sum \sigma_j(A)$, the operator norm as
$\Vert A\Vert:=\sigma_1(A)$ and the Frobenius norm as $\|A \|_F^2:=\sum_i \sigma_i^2 = \sum_{i,j} A_{ij}^2$ 
where  $(\sigma_j(A))$ are the singular values of $A$ arranged in decreasing order. Finally
$\left\Vert A\right\Vert_{\infty}={\max}_{i,j}|A_{ij}|$  denotes the largest absolute value of any entry of $A$. Given a semi-metric $\mathcal D$ we define the diameter of a set $S$ by
$$| S |_{\mathcal D}:=\sup \{ \mathcal D(x,y): ~~x,y \in S \}.$$
% We define the observation-operator $\mathcal{X}$ in the Bernoulli model as
%$$ \mathcal{X}(A):=(B_{ij}A_{ij})_{i,j}.$$
% Furthermore we define the matrix $L_p$ norms, i.e. $\| A \|_{L_p}^p:= \sum_{i,j} |A_{ij}|^p$.
Furthermore, for $k \in \mathbb{N}_0$ we define the parameter space of rank $k$ matrices with entries bounded by $a$ in absolute value as 
\begin{equation} \label{def_parameter_space}\mathcal{A}(a,k):=\{ A \in \mathbb{R}^{m_1 \times m_2}: \| A \|_\infty \leq a ~~\text{and}~~\rank(A) \leq k \}. \end{equation}
Finally, for a subset $\Sigma \subset (0,U]$ we define $$\mathcal A(a,k) \otimes \Sigma:= \{ (A, \sigma): ~A \in\mathcal A(a,k),~ \sigma \in \Sigma \}.$$
As usual, for sequences $a_n$ and $b_n$ we say $a_{n} \lesssim b_{n}$ if there exists a constant $C$ independent of $n$ such that $a_n \leq C \cdot b_n$ for all $n$.  We write $\mathbb P_{M,\sigma}$ (and $\mathbb E_{M,\sigma}$ for the corresponding expectation) for the distribution of the observations in the models  (\ref{trace_model}) or (\ref{Bernoulli_model}), respectively.

%One of the particular settings of this problem is the uniform sampling at random (USR) matrix completion which corresponds to $\Pi$ being uniform distributed. More general sampling schemes have been for example considered by Klopp \cite{KloppTrace} and Negahban and Wainwright \cite{NegahbanWainwright}.\\
%We can rewrite this model in a compact way using the observation operator $\mathcal X^{(TR)}$ defined as follows, i.e.
%$$\mathcal X^{(TR)}\,:\bR^{m_1\times m_2}\rightarrow \bR ^{n}  \qquad\text{with}\qquad \left (\mathcal X^{(TR)}(A)\right )_i=\left\langle X_i,A\right\rangle ~~i=1,...,n.$$
%Our observations are $(\mathcal X^{(TR)}, Y^{(TR)})$
%$$Y^{(TR)}=\mathcal X^{(TR)}(M)+\epsilon^{(TR)}.$$

\section{Minimax theory for adaptive confidence sets} \label{GeometryConfidence}
In this section we present results about existence of honest and adaptive confidence sets in a general minimax framework. To this end, let $Y=Y^n \thicksim \mathbb{P}_f^n$ on some measure space $(\Omega_n , \mathcal{B}), n \in \mathbb{N},$ where $f$ is contained in some parameter space $\mathcal{A}$, endowed with a semi-metric $\mathcal{D}$. Let $r_n$ denote the minimax rate of estimation over $\mathcal A$, i.e.
$$ \inf_{\tilde{f}_n: \Omega_n \rightarrow \mathcal{A}} \sup_{f \in \mathcal{A}} \mathbb{E}_f \mathcal{D}(\tilde{f}, f) \asymp r_n(\mathcal{A}).$$
We consider an `adaptation hypothesis' $\mathcal{A}_0 \subset \mathcal{A}$ characterised by the fact that the minimax rate of estimation in $\mathcal A_0$ is of asymptotically smaller order than in $\mathcal A$: $r_n(\mathcal{A}_0) = o(r_n(\mathcal{A}))$ as $n \rightarrow \infty$. In our matrix inference setting we will choose for $\mathcal{D}$ the distance induced by $\|\cdot\|_F$, for $\mathcal A_0, \mathcal A$ the parameter spaces $\mathcal A(a,k_0)\otimes \Sigma,~ \mathcal A(a,k) \otimes \Sigma$ from above, %with possibly unknown variance $\sigma$,
$k_0=o(k)$ as $\min(n,m) \to \infty$, and data $(Y_i,X_i)$ or $(Y_{ij}, B_{ij})$ arising from equation (\ref{trace_model}) or (\ref{Bernoulli_model}), respectively.

\begin{Definition}[Honest and adaptive  confidence sets] 
	Let $\alpha, \alpha'>0$ be given. A set \\ $C_n=C_n(Y, \alpha) \subset \mathcal{A}$ is a honest confidence set at level $\alpha$ for the model $\mathcal A$ if
	\begin{equation} \label{honesty} \liminf_{n}\inf_{f \in \mathcal A} \mathbb P^n_{f}(f \in C_n) \geq 1-\alpha. \end{equation}
	Furthermore, we say that $C_n$ is adaptive for the sub-model  $\mathcal{A}_0$ at level $\alpha'$ if there exists a constant $K=K(\alpha, \alpha') > 0$ such that
	\begin{equation} \label{adaptivity} \sup_{f\in \mathcal{A}_0} \mathbb{P}^n_{f} \left (|C_n|_{\mathcal{D}} > K r_n(\mathcal{A}_0) \right ) \leq \alpha'\end{equation}
	while still retaining
	\begin{equation} \label{adaptivity1} \sup_{f\in \mathcal{A}} \mathbb{P}^n_{f} \left (|C_n|_{\mathcal{D}} > K r_n(\mathcal{A}) \right ) \leq \alpha'.\end{equation}
\end{Definition}
We next introduce certain composite testing problems.
\begin{Definition}[Minimax rate of testing \& uniformly consistent tests] 
	Consider the testing problem
	\begin{equation} \label{SeparationTest} H_0:f \in \mathcal{A}_0 \quad \text{against} \quad H_1 : f \in \mathcal A, ~\mathcal{D}(f,\mathcal{A}_0) \geq \rho_n \end{equation}
	where $(\rho_n:~  n \in \mathbb{N})$ is a sequence of non-negative numbers. We say that $\rho_n$ is the minimax rate of testing for \eqref{SeparationTest} if
	\begin{itemize}
		\item [(i)] $\forall \beta > 0 ~\exists$ a constant $L=L(\beta) > 0$ and a test $\Psi_n=\Psi_n(\beta),~\Psi_n: \Omega_n \rightarrow \{0,1\}$ such that		
		\begin{equation} \label{TestingError} \sup_{f \in \mathcal A_0} \mathbb{E}_{f} [\Psi_n] + \sup_{f \in \mathcal A,~ \mathcal{D}(f, \mathcal A_0) \geq ~L\rho_n } \mathbb{E}_{f} \left [ 1-\Psi_n\right ] \leq \beta. \end{equation}
		We say that such a test $\Psi_n$ is $\beta$-uniformly consistent.
		\item [(ii)] For some $\beta_0 >0$ and any sequence $\rho^*_n=o(\rho_n)$ we have
		\begin{equation} \label{testbeta0}\liminf_{n \rightarrow \infty} \inf_{\Psi_n: \Omega_n \rightarrow \{0,1\}} \left [	\sup_{f \in \mathcal A_0} \mathbb{E}_{f} [\Psi_n] + \sup_{f \in \mathcal A, ~ \mathcal{D}(f,\mathcal{A}_0) \geq \rho_n^* } \mathbb{E}_{f} \left [ 1-\Psi_n \right ] \right ] \geq \beta_0 > 0.\end{equation}
	\end{itemize}
\end{Definition}
\begin{Theorem} \label{Thm_NonExistence}
	Let $\rho_n$ be the minimax rate of testing for the testing problem \eqref{SeparationTest} and suppose that $ \beta_0 > 0$ is as in \eqref{testbeta0}. Suppose that 
	$$r_n(\mathcal{A}_0) = o(\rho_n).$$ Then a honest and adaptive confidence set $C_n$ that satisfies \eqref{honesty}-\eqref{adaptivity1} 
	for any $\alpha, \alpha' > 0$ such that $0 < 2\alpha + \alpha' < \beta_0$ 
	does not exist. In fact if $3\alpha < \beta_0$, then for any honest confidence set $C_n$ that satisfies $\eqref{honesty}$ we have that
	\begin{equation} \label{lbde}
	\sup_{f \in \mathcal{A}_0 } \mathbb{E}_f |C_n|_{\mathcal{D}} \geq c \rho_n.
	\end{equation}
	for a constant $c=c(\alpha) > 0$.
\end{Theorem}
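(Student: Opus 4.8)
The plan is to derive both conclusions from a single contradiction argument of the standard ``confidence-set-to-test'' type. Suppose $C_n$ is honest at level $\alpha$, i.e.\ \eqref{honesty} holds. Given such a $C_n$, I would build a test $\Psi_n$ for the separation problem \eqref{SeparationTest} out of the diameter of $C_n$: roughly, accept $H_0$ (set $\Psi_n=0$) when $|C_n|_{\mathcal D}$ is small --- say at most some threshold $t_n$ --- and reject otherwise. The two ingredients are (a) under $\mathcal A_0$ the diameter must be small with high probability, because $C_n$ contains $f$ and, by honesty of a \emph{second} confidence set centred on a rate-optimal estimator, cannot be too large; (b) under the alternative $\mathcal D(f,\mathcal A_0)\ge L\rho_n$, if $|C_n|_{\mathcal D}$ were small then $C_n$ would be a small ball necessarily far from $\mathcal A_0$ (since it contains the true far-away $f$), which combined with honesty would force a contradiction --- hence the diameter is large, and $\Psi_n=1$.

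More concretely, the first step is to fix a minimax-rate estimator $\hat f_n$ over $\mathcal A_0$, so that $\mathbb E_f \mathcal D(\hat f_n,f)\lesssim r_n(\mathcal A_0)$ for $f\in\mathcal A_0$, and form the enlarged set $D_n = C_n \cap \{g: \mathcal D(g,\hat f_n)\le R\, r_n(\mathcal A_0)\}$ for a large constant $R$; by Markov this ball captures $f$ with probability close to $1$ uniformly over $\mathcal A_0$, so $D_n$ is still honest over $\mathcal A_0$ and has diameter $O(r_n(\mathcal A_0))$ there with high probability. The second step defines $\Psi_n = \mathbf 1\{|C_n|_{\mathcal D} > \tfrac12 L \rho_n\}$ (or uses $\mathcal D(\hat f_n,\mathcal A_0)$ together with the diameter bound) and checks the two error probabilities in \eqref{TestingError}: the type I error is controlled because on $\mathcal A_0$ the diameter is $O(r_n(\mathcal A_0)) = o(\rho_n)$, so eventually below the threshold except on an event of probability $\le \alpha + o(1)$; the type II error is controlled because on the alternative, honesty gives $f\in C_n$ with probability $\ge 1-\alpha-o(1)$, and on that event any $g\in C_n$ near $\mathcal A_0$ would contradict $\mathcal D(f,\mathcal A_0)\ge L\rho_n$ unless $|C_n|_{\mathcal D}$ is comparable to $\rho_n$. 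Choosing $L$ large enough makes the total error $\le 2\alpha + o(1) < \beta_0$ for $n$ large, contradicting the lower bound \eqref{testbeta0}. This rules out honest-and-adaptive $C_n$ whenever $2\alpha+\alpha' < \beta_0$, once one also feeds in the adaptivity requirement \eqref{adaptivity} to get the diameter small on $\mathcal A_0$ directly rather than via $\hat f_n$.

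For the quantitative statement \eqref{lbde}, I would argue by contradiction from the negation: suppose $\sup_{f\in\mathcal A_0}\mathbb E_f|C_n|_{\mathcal D} < c\rho_n$ along a subsequence, with $c$ small. Then Markov's inequality gives $\mathbb P_f(|C_n|_{\mathcal D} > \tfrac12 L\rho_n) < 2c/L$ uniformly on $\mathcal A_0$, so the test $\Psi_n = \mathbf 1\{|C_n|_{\mathcal D} > \tfrac12 L\rho_n\}$ has type I error $< 2c/L$; the type II analysis is as above and contributes at most $\alpha + o(1) + $ (a term made small by taking $L$ large). Adding up, the combined testing error is $< 3\alpha/... $ --- precisely, one arranges it to be below $\beta_0$ when $3\alpha < \beta_0$ and $c$ is chosen small relative to $\beta_0 - 3\alpha$ --- again contradicting \eqref{testbeta0}. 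The constant $c=c(\alpha)$ emerges from tracking how small $c$ must be so that $3\alpha + (\text{terms in }c) < \beta_0$.

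The main obstacle I anticipate is the type II error analysis: turning ``$C_n$ has small $\mathcal D$-diameter and contains a point $f$ with $\mathcal D(f,\mathcal A_0)\ge L\rho_n$'' into a genuine contradiction requires being careful that $C_n$ is forced to contain $f$ only on an event of probability $1-\alpha$, not surely, so the bookkeeping of which events are conditioned on must be done cleanly; and one must make sure the threshold $\tfrac12 L\rho_n$ simultaneously lies above the $\mathcal A_0$-diameter (size $o(\rho_n)$, fine for large $n$) and below what the diameter must be on the alternative (this is where $L$ large, relative to the implicit constants, is used). A secondary point is that the argument needs $r_n(\mathcal A_0) = o(\rho_n)$ exactly as hypothesised to guarantee the type I threshold is eventually clear of the $\mathcal A_0$-diameter; this is used in an essential, not cosmetic, way.
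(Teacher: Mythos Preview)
Your overall strategy --- manufacture a test $\Psi_n$ out of $C_n$ and contradict the testing lower bound \eqref{testbeta0} --- is exactly what the paper invokes (it does not spell out a proof but refers to Proposition~8.3.6 in \cite{Ginenickl} and Theorem~4 in \cite{CarpentierNickl}). The Markov step you use to pass from a hypothetically small $\sup_{\mathcal A_0}\mathbb E_f|C_n|_{\mathcal D}$ to small type~I error is also correct in spirit.

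The gap is in your type~II analysis, and it is a missing ingredient rather than a bookkeeping subtlety. With the diameter-only test $\Psi_n=\mathbf 1\{|C_n|_{\mathcal D}>\tfrac12 L\rho_n\}$, the events $\{f\in C_n\}$ and $\{|C_n|_{\mathcal D}\le\tfrac12 L\rho_n\}$ are perfectly compatible on the alternative: a good confidence set will be a small neighbourhood of the true far-away $f$, so $\Psi_n=0$ there with high probability. Your observation that on this intersection ``no $g\in C_n$ is near $\mathcal A_0$'' is correct but yields no contradiction --- honesty only forces the \emph{true} $f$ into $C_n$, it says nothing about points of $\mathcal A_0$. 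The standard remedy is to build the test from \emph{both} pieces of information: set $\Psi_n=0$ iff $|C_n|_{\mathcal D}\le t_n$ \emph{and} $C_n\cap\mathcal A_0\neq\emptyset$. For $f\in\mathcal A_0$ the type~I error splits as $\mathbb P_f(|C_n|_{\mathcal D}>t_n)+\mathbb P_f(f\notin C_n)\le\alpha'+\alpha+o(1)$ (using adaptivity for the first term, or Markov on the expected diameter for the proof of \eqref{lbde}). For $f$ in the alternative at separation $\rho_n^*:=2t_n=o(\rho_n)$, the events $\{\Psi_n=0\}$ and $\{f\in C_n\}$ are now genuinely disjoint: if $g\in C_n\cap\mathcal A_0$ and $f\in C_n$ with $|C_n|_{\mathcal D}\le t_n$ then $\mathcal D(f,\mathcal A_0)\le\mathcal D(f,g)\le t_n<\rho_n^*$, a contradiction. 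Hence $\mathbb P_f(\Psi_n=0)\le\mathbb P_f(f\notin C_n)\le\alpha+o(1)$, and the total error $\le 2\alpha+\alpha'+o(1)<\beta_0$ contradicts \eqref{testbeta0}. With $t_n=Kr_n(\mathcal A_0)$ this gives the first claim; for \eqref{lbde} one assumes $\sup_{\mathcal A_0}\mathbb E_f|C_n|_{\mathcal D}=o(\rho_n)$ along a subsequence and chooses $t_n$ intermediate between this quantity and $\rho_n$.
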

The first claim of this theorem is Proposition 8.3.6 in \cite{Ginenickl}. The lower bound (\ref{lbde}) also follows from that proof, arguing as in the proof of Theorem 4 in \cite{CarpentierNickl}. 

\smallskip

A converse of Theorem \ref{Thm_NonExistence} also exists, as can be extracted from Proposition 8.3.7 in \cite{Ginenickl} and an observation in Carpentier (see \cite{Carpentier}, proof of Theorem 3.5 in Section 6).
%The convers
% has been used by Carpentier \cite{Carpentier} to derive honest and %adaptive confidence sets in the Gaussian white noise model setting, but %we formally state and prove this result in all its generality for the %first time in the present paper. 
For this we need the notion of an \textit{oracle}-estimator. \begin{Definition}[Oracle estimator] Let $\beta > 0$ be given. We say that an estimator $\hat f$ satisfies an oracle inequality at level $\beta$ if there exists a constant $C$ such that for all $f \in \mathcal A$ we have with $\mathbb{P}_{f}^n$-probability at least $1-\beta$,
	\begin{equation} \label{Oracleinequality}
	\mathcal{D}(\hat f, f) \leq C \inf_{\tilde{\mathcal{A}} \in \{ \mathcal{A}, \mathcal{A}_0 \} }\left ( \mathcal{D}(f,\tilde{\mathcal{A}}) + r_n(\tilde{\mathcal{A}}) \right ).
	\end{equation}
\end{Definition}
This is a typical property of adaptive estimators, and is for example in the trace-regression setting fulfilled by the soft-thresholding estimator proposed by Koltchinskii et.al. \cite{Koltchinskiietal}. The following theorem proves that if the minimax rate of testing is no larger than the minimax rate of estimation in the adaptation hypothesis, then honest adaptive confidence sets do exist. The proof is constructive and yields a confidence set of non-asymptotic coverage at least $1-\alpha$. 
\begin{Theorem} \label{Thm_Existence} Let $\alpha, \alpha' > 0$ be given. 
	Let $\rho_n$ be the minimax rate of testing for the problem \eqref{SeparationTest} such that a $\min(\alpha/2, \alpha')$-uniformly consistent test exists. Assume that $\rho_n \leq C'r_n(\mathcal{A}_0)$ for some constant $C'=C(\alpha, \alpha')>0$. Moreover, assume that an oracle estimator $\hat f$ at level $\alpha/2$ fulfilling \eqref{Oracleinequality} exists. Then there exists a confidence set $C_n$ that adapts to the sub-model $\mathcal{A}_0$ at level $\alpha'$ satisfying 	\eqref{adaptivity}, \eqref{adaptivity1} and that is honest at level $\alpha$, i.e., $$ \sup_{f \in \mathcal{A}} \mathbb{P}^n_{f} \left ( f \notin C_n \right )  \leq \alpha. $$
	
\end{Theorem}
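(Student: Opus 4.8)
The plan is to use the classical two-stage construction behind Proposition 8.3.7 in \cite{Ginenickl} (see also \cite{Carpentier}): run the test $\Psi_n$ to guess the ``effective complexity'' of $f$, and center a $\mathcal D$-ball of data-driven radius at the oracle estimator $\hat f$. Concretely, fix a large constant $K$ (pinned down at the end) and set
$$ C_n := \Bigl\{\, g \in \mathcal A : \ \mathcal D(\hat f, g) \le K\, r_n(\mathcal A_0)\ \text{ if }\Psi_n = 0,\quad \mathcal D(\hat f, g) \le K\, r_n(\mathcal A)\ \text{ if }\Psi_n = 1 \,\Bigr\}. $$
Then $C_n \subset \mathcal A$ by construction. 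Since a $\mathcal D$-ball of radius $R$ has diameter at most $2R$ (triangle inequality for the semi-metric), and $r_n(\mathcal A_0) \le r_n(\mathcal A)$ for $n$ large because $r_n(\mathcal A_0) = o(r_n(\mathcal A))$, we always have $|C_n|_{\mathcal D} \le 2K\, r_n(\mathcal A)$, which gives \eqref{adaptivity1} with constant $2K$ and probability one. Moreover $|C_n|_{\mathcal D} > 2K\, r_n(\mathcal A_0)$ forces $\Psi_n = 1$, and for $f \in \mathcal A_0$ the type-I error bound in \eqref{TestingError} for the $\min(\alpha/2,\alpha')$-uniformly consistent test gives $\mathbb P_f(\Psi_n = 1) \le \alpha'$; this yields \eqref{adaptivity}, again with constant $2K$.

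The substance is honesty. Fix $f \in \mathcal A$ and let $\mathcal E$ be the event of $\mathbb P_f^n$-probability at least $1-\alpha/2$ on which the oracle inequality \eqref{Oracleinequality} holds; I split according to $\mathcal D(f, \mathcal A_0)$. If $\mathcal D(f,\mathcal A_0) \le c_0\, r_n(\mathcal A_0)$ for a threshold $c_0$ chosen below, then on $\mathcal E$ the bound \eqref{Oracleinequality} with $\tilde{\mathcal A} = \mathcal A_0$ gives $\mathcal D(\hat f, f) \le C(c_0 + 1)\, r_n(\mathcal A_0)$ and with $\tilde{\mathcal A} = \mathcal A$ gives $\mathcal D(\hat f, f) \le C\, r_n(\mathcal A)$, so as soon as $K \ge C(c_0+1)$ we get $f \in C_n$ regardless of the value of $\Psi_n$; hence $\mathbb P_f(f \notin C_n) \le \mathbb P_f(\mathcal E^c) \le \alpha/2$. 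If instead $\mathcal D(f,\mathcal A_0) > c_0\, r_n(\mathcal A_0)$, then on $\mathcal E \cap \{\Psi_n = 1\}$ the bound with $\tilde{\mathcal A} = \mathcal A$ again gives $f \in C_n$, so $\{f \notin C_n\} \subset \mathcal E^c \cup \{\Psi_n = 0\}$; using $\rho_n \le C' r_n(\mathcal A_0)$ we have $\mathcal D(f, \mathcal A_0) > c_0\, r_n(\mathcal A_0) \ge (c_0/C')\rho_n$, so choosing $c_0 \ge C' L$ with $L = L(\min(\alpha/2,\alpha'))$ the separation constant of the test, $f$ is a genuine alternative and the type-II bound in \eqref{TestingError} gives $\mathbb P_f(\Psi_n = 0) \le \alpha/2$; hence again $\mathbb P_f(f \notin C_n) \le \alpha$. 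Taking the supremum over $f \in \mathcal A$ gives the non-asymptotic coverage $\sup_{f\in\mathcal A}\mathbb P^n_f(f\notin C_n)\le\alpha$.

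The one genuine obstacle is the choice of constants at the interface of the two cases, and this is precisely where the hypothesis $\rho_n \le C' r_n(\mathcal A_0)$ is needed: $K$ must be large enough that the ``small'' ball of radius $\propto r_n(\mathcal A_0)$ already contains every $f$ within $O(r_n(\mathcal A_0))$ of $\mathcal A_0$ (which forces $c_0$ to scale like $K$), while simultaneously the separation threshold $c_0\, r_n(\mathcal A_0)$ must dominate $L\rho_n$ so that the test reliably rejects beyond it — and these two requirements are compatible exactly because $\rho_n$ and $r_n(\mathcal A_0)$ are of the same order (up to the one-sided constant $C'$). Concretely I would fix, in this order: $\beta = \min(\alpha/2, \alpha')$ and the associated $L = L(\beta)$; then $c_0 = \max(1,\, C'L)$; then $K = C(c_0 + 1)$ with $C$ the oracle constant; and finally $2K$ for the diameter statements. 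No probabilistic input beyond the two union bounds $\mathbb P_f(\mathcal E^c) \le \alpha/2$ and $\mathbb P_f(\Psi_n \text{ errs}) \le \alpha/2$ is needed. A minor bookkeeping point is that ``$r_n(\mathcal A_0) \le r_n(\mathcal A)$'' and the displayed inequalities hold only for $n$ large; the finitely many small $n$ are absorbed by inflating $K$, or one simply reads off the stated non-asymptotic coverage.
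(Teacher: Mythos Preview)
Your proposal is correct and follows essentially the same route as the paper's proof: the same test-based confidence set $C_n$ centred at the oracle estimator with radius $K r_n(\mathcal A_0)$ or $K r_n(\mathcal A)$ according to $\Psi_n$, the same adaptivity argument via the type-I error of $\Psi_n$, and the same honesty argument splitting on the distance $\mathcal D(f,\mathcal A_0)$. Your two-case split (close to $\mathcal A_0$ versus far) is a mild streamlining of the paper's three cases ($f\in\mathcal A_0$; $f\notin\mathcal A_0$ and far; $f\notin\mathcal A_0$ but close), since $f\in\mathcal A_0$ is absorbed into your ``close'' case, and your explicit tracking of the factor $2$ between radius and diameter is slightly more careful than the paper's write-up.
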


\section{Minimax matrix completion}\label{matrixres}
%Moreover, they %\cite{candes_plan_noise} the authors 
%also provide an information theoretic lower bound. In a second paper Cand\`es and Plan \cite{candes_plan_noise} consider only square matrices with %that 
%$m_1=m_2=d$. Provided that %and also given oracle knowledge %provides us 
%the subspace $T$ %of a linear
%subspace $T$ 
%where the true matrix $M_0$ lies in, is known, they prove that a projection estimator $\widehat M_{\text{oracle}}$  obtains 
%\begin{equation*}%\label{candes}
%\dfrac{\Vert \widehat M_{\text{oracle}}-M_0\Vert_F^{2}}{m_1m_2}\asymp \sigma^{2}\dfrac{d\,\rank(M_0)}{n}.
%\end{equation*}
Noisy matrix completion has been extensively studied in several papers starting from Candes and Plan \cite{candes_plan_noise}, see e.g. \cite{candes_plan_tight, Keshavan, Koltchinskiietal, NegahbanWainwright, KloppTrace, Chatterjee,CaiZhou, KloppThresh, recht}).
Optimal rates have been achieved under various sets of assumptions. For instance the construction of the estimator (and the resulting upper bound) in \cite{NegahbanWainwright} requires knowledge of the `spikiness' ratio of the unknown matrix and leads to sub-optimal rates in the case of sparse matrices. The bounds due to Keshavan et. al. \cite{Keshavan} are also only optimal for certain classes of matrices, namely almost square matrices that have a condition number bounded by a constant and fulfil the incoherence condition introduced by \cite{CandesRecht}. Optimal convergence rates for the classes $\mathcal A(a,k)$ of matrices under consideration in the present paper have been obtained by Koltchinskii. et. al. \cite{Koltchinskiietal} and Klopp \cite{KloppTrace} for the trace-regression model and by Klopp \cite{KloppThresh} for the Bernoulli model.
%Koltchinskii et. al. \cite{Koltchinskiietal} remove this constraint on the spikiness and consider arbitrary matrices in the set $\mathcal{A}(a,k)$. 
%The following lower bound for  the risk of recovering a  matrix $M_0\in {\cal A}(a,k)$  has been shown by Koltchinskii et. al. \cite{Koltchinskiietal}. 
For example, in the trace-regression setting, Klopp \cite{KloppTrace} shows that a constrained Matrix Lasso estimator $\hat M := \hat M(a,\sigma)$ satisfies with $\mathbb{P}_{M_0, \sigma}$-probability at least $1-2/d$ 
\begin{equation} \label{upper_bound_matrix_lasso}
\dfrac{\Vert \hat M-M_0\Vert_F^{2}}{m_1m_2}\leq C \dfrac{%(\sigma +a)^{2}
	kd \log(d)}{n} \quad \text{and} \quad \Vert M_0 - \hat M\Vert_{\infty}\leq 2a
\end{equation}
as long as $m \log(d) \leq n \leq d^2 \log(d)$ and where $C=C(\sigma, a) > 0$.
Similarly, in the Bernoulli model with noise bounded by $U$ it has been shown in Klopp \cite{KloppThresh} that an iterative soft thresholding estimator $\hat M := \hat M(a,\sigma)$ satisfies with $\mathbb{P}_{M_0, \sigma}$-probability at least $1-8/d$
\begin{equation} \label{upper_bound_thresholding}
\dfrac{\Vert \hat M-M_0\Vert_{F}^{2}}{m_1m_2}\leq C\frac{
	%(a+\sigma)^2
	kd}{n} \quad \text{and} \quad \Vert M_0 - \hat M\Vert_{\infty}\leq 2a
\end{equation} 
for $n \geq m \log(d)$ and 
for a constant $C=C(\sigma, a, U)> 0$. 
%There is an additional $\log$ factor in \eqref{upper_bound_matrix_lasso} in  case of the trace-regression model. However, assuming bounded noise it can be  removed  using the same techniques as Klopp in \cite{KloppThresh} and we sketch the proof in Proposition \ref{Matrixlassotrace}. 
Matching lower bounds have also been shown by Koltchinskii. et a. \cite{Koltchinskiietal} and Klopp \cite{KloppThresh}. In the trace-regression model with Gaussian noise we have for constants $\beta \in (0,1)$ and $c=c(\sigma, a) > 0$ that
\begin{equation*} %\label{lower_bound_trace}
\inf_{\hat M} \sup_{M_0 \in \mathcal{A}(a,k)} \mathbb{P}_{M_0, \sigma} \left (   \dfrac{\|\hat M- M_0\|^2_F}{m_1m_2} > c\frac{%\min\{\sigma,a\}^{2}
	kd }{n} \right ) \geq \beta.
\end{equation*}
A similar lower bound can be obtained in the Bernoulli setting (see Klopp \cite{KloppThresh}).
%\begin{equation*} %\label{lower_bound_bernoulli}
%\inf_{\hat M} \sup_{M \in \mathcal{A}(a,r)}  \dfrac{\|\hat M- M\|^2_2}{m_1m_2} \geq  \frac{\min\{\sigma,a\}^{2} r}{pm}.
%\end{equation*}
These lower and upper bounds imply that for the Frobenius loss and the parameter space $\mathcal{A}(a,k)$ the minimax rate $r_{n,m}(\mathcal{A}(a,k))$  is (at most up to a log-factor) of order 
\begin{equation} \label{mrisk}  %(\sigma+a)^{2} 
\sqrt{m_1 m_2 kd/n}.
\end{equation}
\section{Trace Regression Model \label{sec:trace} }

We first consider the trace regression model. %Recall the assumption $n \le m_1 m_2$ and that we write $\mathbb P_{M,\sigma}$ (and $\mathbb E_{M,\sigma}$ for the corresponding expectation) for the distribution of the data in the trace regression model \eqref{trace_model} when the parameters are $M$ and $\sigma^2$. 
For the sake of precision we sometimes write $M_0$ for the `true parameter' $M$ that has generated the equation (\ref{trace_model}). \\
%In this section, we suppose that Assumption~\ref{noise_boundedTR} is satisfied. We also assume that $\Vert M_0\Vert_{\infty}\leq a$.% and that, although the quantities $a,U$ are available to the statistician (i.e.~upper bounds on the matrix entries and on the noise), the precise knowledge of the noise variance $\sigma$ is not available.
%We will show that, in the case of the trace regression model, the construction of honest and adaptive confidence sets in Frobenius norm is possible even  if the variance $\sigma$ is unknown. \\
For notational simplicity we assume that $n$ is even. Then we can split our observations in two independent sub-samples of equal size $n/2$. In what follows all probabilistic statements are under the distribution $\pp$ (with corresponding expectation written $\mathbb E$) of the first sub-sample $(Y^{tr}_i,X_i)_{i \leq n/2}$ of size $n/2 \in \mathbb N$, conditional on the second sub-sample $(Y^{tr}_i,X_i)_{i > n/2}$, i.e.~we have $\mathbb P(.) = \mathbb P_{M_0, \sigma}(\,\cdot\,| (Y^{tr}_i,X_i)_{i > n/2})$.

\subsection{A non-asymptotic confidence set in the trace regression model with known variance of the errors.}
In this case we can adapt the construction of \cite{CEGN15}: we first unbiasedly estimate the risk $\| \hat M - M_0 \|_F^2/(m_1m_2)$ of a minimax optimal estimator $\hat M$ computed from an independent sample (e.g., via sample splitting) by a natural $\chi^2$-statistic (see \eqref{residual}). The construction of an unbiased estimate requires knowledge of $\sigma^2$, but when available this estimate, enlarged by natural quantile constants, serves as a good proxy for the diameter of the confidence set $C_n$ centred at $\hat M$.

More precisely, using only the second sub-sample $(Y^{tr}_i,X_i)_{i > n/2}$ we compute the matrix lasso estimator from Klopp \cite{KloppTrace} which achieves the bound \eqref{upper_bound_matrix_lasso} with probability at least $1-2/d$.
%attains the optimal rate \eqref{mrisk} with $\mathbb{P}_{M_0, \sigma}$-probability at least $1-1/d$ by Proposition \ref{Matrixlassotrace}. 
Then, we freeze $\hat M$ and the second sub-sample.
We define the following residual sum of squares statistic:
\begin{equation}\label{residual}
\hat R_n = \frac{2}{n} \sum_{i \leq n/2}(Y^{tr}_i-\langle X_i, \hat M\rangle)^2 - \sigma^2.
\end{equation}
Given $\alpha>0$, let $\xi_{\alpha, \sigma,U}=\sqrt{2}\sigma U\log(\alpha)$,
%{\color{red}$\xi_{\alpha, \sigma}$ be quantile constants such that 
%\begin{equation} \label{quantile_trace}
%\Pr\left(\sum_{i=1}^n(\epsilon_i^2-1)>\xi_{\alpha, \sigma} \sqrt n\right) = \alpha.
%\end{equation}}
%These constants converge to the quantiles of a fixed normal distribution as $n \to \infty$. Let 
$z_\alpha=\log(3/\alpha)$ and, for a $z > 0$, a fixed constant to be chosen, define the confidence set
\begin{equation} \label{RSSconf_trace}
C_n = \left\{A \in \mathbb R^{m_1\times m_2}: \dfrac{\|A-\hat M\|_F^2}{ m_1m_2} \le  2 \left(\hat R_n + z \frac{d}{n} + \frac{\bar z+\xi_{\alpha, \sigma,U}}{\sqrt n}\right)  \right\},
\end{equation}
where $$\bar z^2= \bar z^2(\alpha,d,n, \sigma, z) = z_{\alpha}\sigma^2 \max\left (\dfrac{3\|A-\hat M\|^2_F}{m_1m_2}, 4zd/n\right ).$$ 
%Note that in the `quantum shape constraint' case we can always bound $\|v-\widetilde\theta\|_F \le 2$ which gives a confidence set that is easier to compute and of only marginally larger overall diameter. In many important situations, however, the quantity $\bar z/\sqrt n$ is of smaller order than $1/\sqrt n$, and the more complicated expression above is preferable. 
It is not difficult to see (using that $x^2 \lesssim y+x/\sqrt n$ implies $x^2 \lesssim y +1/n$) that
\begin{equation} \label{diam_trace}
\ee_{M_0, \sigma} \left [\frac{| C_n|^2_F}{m_1m_2} \bigg | \hat{M} \right ]\lesssim \frac{\|\hat M -M_{0}\|_F^2}{m_1m_2} + \frac{zd + \sigma^{2}z_{\alpha/3}}{n} + \frac{\xi_{\alpha, \sigma,U}}{\sqrt n}.
\end{equation}
Markov's inequality, \eqref{diam_trace} and that $\hat{M}$  is minimax optimal (up to a log-factor) with $\mathbb{P}_{M_0, \sigma}$-probability of at least $1-2/d$ as long as $m \log(d) \leq n \leq d^2  \log(d)$ imply that $C_n$ has an adaptive and up to a log-factor  minimax optimal squared diameter with probability $1-\alpha'$ for any $\alpha' > 2/d$.
The following theorem shows that $C_n$ is also a honest confidence set:
\begin{Theorem} \label{thm_trace_known_variance}
	%Let $M_{0} \in \mathbb R^{m_1\times m_2}$ , let $\pp_{M_0}$ be the distribution of $(Y,X)$ from the trace regression model (\ref{trace_model})
	Let $\alpha >0$, $ \alpha' > 2/d$ and suppose that $ m \log(d) \leq n \leq d^2  \log(d)$, that Assumption~\ref{noise_boundedTR} is satisfied and that $\sigma > 0$ is known. Let $C_n=C_n(Y,\alpha, \sigma)$ be given by (\ref{RSSconf_trace}) with $z > 0$. Then, for every $n \in \mathbb N$ and every $M_0 \in \mathcal{A}(a,m)$,
	$$ \pp_{M_0, \sigma} \left (M_{0} \in C_n \right ) \ge 1-\frac{2\alpha}{3} -  2 e^{-zd/(11a^{2})}.$$  
	Hence, for any $1 \leq k_0 < k \leq m$, $C_n$ is a honest and (up to a  log-factor) adaptive confidence set at the level $\alpha$  for the model $\mathcal{A}(a,k) \otimes \{ \sigma\}$ and adapts to the sub-model $\mathcal{A}(a,k_0) \otimes \{\sigma \}$ at level $\alpha'$.
\end{Theorem}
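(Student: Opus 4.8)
The plan is to exploit that $\hat R_n$ is, conditionally on the second sub-sample, an \emph{unbiased} estimator of the (random) loss $D:=\|\hat M-M_0\|_F^2/(m_1m_2)$ of the frozen estimator $\hat M$, and that it concentrates around $D$ sharply enough that $M_0$ lands in $C_n$ with the claimed probability. All computations are done conditionally on $(Y^{tr}_i,X_i)_{i>n/2}$, so $\hat M$ is a fixed matrix; since the constrained Matrix Lasso of \cite{KloppTrace} obeys $\|\hat M\|_\infty\le a$ and $\|M_0\|_\infty\le a$, we have $\|\hat M-M_0\|_\infty\le 2a$ \emph{deterministically}, and this crude bound is all that coverage requires --- the minimax optimality of $\hat M$ (which holds only on an event of probability $\ge 1-2/d$) will be used only for the diameter claim. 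Writing $Y^{tr}_i-\langle X_i,\hat M\rangle=\langle X_i,M_0-\hat M\rangle+\epsilon_i=:a_i+\epsilon_i$ and using that $X_i$ is uniform on $\mathcal B$, so that $\mathbb E\langle X_i,A\rangle^2=\|A\|_F^2/(m_1m_2)$, together with $\mathbb E\epsilon_i=0$, $\mathbb E\epsilon_i^2=\sigma^2$ and the independence of $\epsilon_i$ and $X_i$, one obtains $\mathbb E[\hat R_n]=D$.

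Plugging $A=M_0$ into \eqref{RSSconf_trace} and rearranging, $\{M_0\in C_n\}$ is equivalent to $D-\hat R_n\le \tfrac12 D+z\tfrac dn+(\bar z+\xi_{\alpha,\sigma,U})/\sqrt n$, where $\bar z^2=z_\alpha\sigma^2\max(3D,4zd/n)$. Since $D-\hat R_n=-\tfrac2n\sum_{i\le n/2}\big((a_i+\epsilon_i)^2-\mathbb E(a_i+\epsilon_i)^2\big)$, I would split it into three i.i.d., bounded, mean-zero sums,
$$ (\mathrm I)=-\tfrac2n\sum(a_i^2-D),\qquad (\mathrm{II})=-\tfrac4n\sum a_i\epsilon_i,\qquad (\mathrm{III})=-\tfrac2n\sum(\epsilon_i^2-\sigma^2), $$
and apply Bernstein's inequality to each, aiming to show $(\mathrm I)\le \tfrac12 D+zd/n$, $(\mathrm{II})\le\bar z/\sqrt n$ and $(\mathrm{III})\le\xi_{\alpha,\sigma,U}/\sqrt n$ on events of the right probability; summing these three inequalities gives $\{M_0\in C_n\}$. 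For $(\mathrm{III})$: $|\epsilon_i^2-\sigma^2|\le U^2$ and $\Var(\epsilon_i^2)\le U^2\sigma^2$, and $\xi_{\alpha,\sigma,U}$ is calibrated precisely so that the Bernstein bound at threshold $\xi_{\alpha,\sigma,U}/\sqrt n$ is at most $\alpha/3$. For $(\mathrm{II})$: $|a_i\epsilon_i|\le 2aU$ and $\Var(a_i\epsilon_i)=D\sigma^2$, so the Bernstein variance term is of order $nD\sigma^2$ while $n(\bar z/\sqrt n)^2=\bar z^2\ge 3z_\alpha\sigma^2 D$ --- the threshold dominates the sub-Gaussian scale with room to spare, giving $\le\alpha/3$; here the floor $4zd/n$ inside $\bar z$ keeps this uniform in the case of small $D$, and the assumed range $m\log d\le n\le d^2\log d$ makes the linear Bernstein corrections lower order. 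For $(\mathrm I)$: $0\le a_i^2\le 4a^2$ and $\Var(a_i^2)\le 4a^2 D$; at threshold $\tfrac12 D+zd/n$ Bernstein yields an exponent of order $-zd/a^2$ (treating separately the regimes $D\gtrsim zd/n$, where $\tfrac12 D$ controls the variance, and $D\lesssim zd/n$, where $zd/n$ dominates), and careful bookkeeping of how the budget $\tfrac12 D+zd/n$ is split between the two regimes produces exactly $2e^{-zd/(11a^2)}$.

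A union bound over the three failure events --- an estimate uniform over all feasible $\hat M$, hence valid after integrating out the second sub-sample --- gives $\mathbb P_{M_0,\sigma}(M_0\notin C_n)\le \tfrac{2\alpha}{3}+2e^{-zd/(11a^2)}$ for every $n$ and every $M_0\in\mathcal A(a,m)$, which is the displayed bound. Since the range $m\log d\le n\le d^2\log d$ forces $d\to\infty$ as $n\to\infty$, the term $2e^{-zd/(11a^2)}$ vanishes and $\liminf_n\inf_{M_0\in\mathcal A(a,k)}\mathbb P_{M_0,\sigma}(M_0\in C_n)\ge 1-\tfrac{2\alpha}{3}\ge 1-\alpha$, i.e.\ $C_n$ is honest at level $\alpha$ over $\mathcal A(a,k)\otimes\{\sigma\}$; the adaptive, up-to-a-log minimax optimal diameter over $\mathcal A(a,k_0)\otimes\{\sigma\}$ and $\mathcal A(a,k)\otimes\{\sigma\}$ then follows from the bound \eqref{diam_trace}, Markov's inequality, and the fact that $\hat M$ attains the minimax rate (up to a log factor) with $\mathbb P_{M_0,\sigma}$-probability $\ge 1-2/d\ge 1-\alpha'$.

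The main obstacle is the Bernstein analysis of $(\mathrm I)$: unlike the two noise terms, its variance $4a^2D$ depends on the random loss $D$, which may be anything in $[0,4a^2]$, so the deviation bound must be made uniform in $D$. Producing a single clean exponential of the form $e^{-zd/a^2}$ with an explicit constant ($11$, and the factor $2$) requires splitting the threshold $\tfrac12 D+zd/n$ judiciously across the two regimes of the $\max$ and tracking constants through Bernstein's inequality --- everything else is either an elementary moment computation or a standard concentration estimate.
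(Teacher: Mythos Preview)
Your approach is correct and is precisely the one the paper has in mind: the paper omits the proof entirely, stating that it ``follows the lines of the proof of Theorem~2 in \cite{CEGN15}'', which is exactly the three-term decomposition of $D-\hat R_n$ into the pure-design piece $(\mathrm I)$, the cross term $(\mathrm{II})$, and the pure-noise piece $(\mathrm{III})$, each handled by Bernstein-type concentration conditionally on the frozen $\hat M$. Your identification of $(\mathrm I)$ as the delicate term is accurate --- the only caveat is that extracting the \emph{exact} constant $11$ (and the factor $2$) from Bernstein requires a slightly sharper lower-tail argument than the generic form (exploiting that $a_i^2\ge 0$ forces $(\mathrm I)\le D$, so the event is vacuous when $D\le 2zd/n$), and the paper's own constants (e.g.\ the stated $\xi_{\alpha,\sigma,U}=\sqrt{2}\sigma U\log\alpha$) already contain apparent typos, so you should not agonise over matching them to the last digit.
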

The proof of Theorem \ref{thm_trace_known_variance} follows the lines of the proof of Theorem 2 in \cite{CEGN15} and we omit it here as the unknown variance results considered in the next section straightforwardly imply the known variance results.

\subsection{A non-asymptotic confidence set in the trace regression model with unknown error variance. }

%As for the case of the known variance, we can split our observations in two independent  parts.
%Since Assumption~\ref{noise_boundedTR} is satisfied and $\Vert M_0\Vert_{\infty}\leq a$, 
In this subsection we assume,  that the precise knowledge of the noise variance $\sigma$ is \textit{not} available, although the quantities $a,U$ are available to the statistician (i.e.~upper bounds on the matrix entries and on the noise). More precisely we assume that $\sigma$ belongs to a known set $\Sigma \subset (0,U]$. In applications of matrix completion this is usually a realistic assumption since the entries of $M_0$ are bounded: For example in a movie recommender system (e.g. \cite{BennetLanning, Goldberg}) the entries of the observations $Y$ and consequently $M_0$ and $\epsilon_i$ are bounded from above by the best possible rating and below from the worst possible rating.\\
As the variance is now assumed to be unknown the construction from \eqref{RSSconf_trace} is not feasible anymore since we can not compute the test statistic \eqref{residual}. Instead we use a U-statistic approach: As in the previous section, we use the second half of the sample,  $(Y_i^{tr}, X_i)_{n/2 < i\leq n }$, for constructing a minimax optimal estimator $\hat M$ of $M$ that fulfills $\| \hat M \|_\infty \leq a$. We use again the matrix lasso estimator from Klopp \cite{KloppTrace} (with $\sigma$ replaced by its upper bound ${U}$) which achieves \eqref{upper_bound_matrix_lasso} with probability at least $1-2/d$. %which achieves the bound \eqref{mrisk} with $\mathbb{P}_{M_0, \sigma}$-probability of at least $1-1/d$ by Proposition \ref{Matrixlassotrace}. \\ %Furthermore for these estimators to achieve the rates in \eqref{upper_bound_matrix_lasso} we require $n \geq m \log^3(d)$. \\
%\begin{proposition} \label{prop:klopp}
%Assume that $n\gtrsim d\,\rank(M_0)\log(d)$ and that $ \left\Vert M_0\right\Vert_\infty\leq a$ for some constant $a\geq 0$. Consider regularization parameter $\lambda$ satisfying \eqref{lambdaun}. Then, there exist numerical constants $(c,c')$ such that, 
%\begin{equation}
%\dfrac{\Vert \hat M-M_0\Vert_2^{2}}{m_1m_2}\leq c\max(\sigma^{2},a^{2})\dfrac{\rank(M_0)d\log(d)}{n}
%\end{equation}
%with probability greater than $1-3/d-2\exp\{-c'n\}$.
%\end{proposition}
%Condition on $n$ in Proposition\ref{prop:klopp} 
In order to construct the confidence set, we will be interested in all pairs of observations $(Y_l^{tr}, X_l)$ and $( Y_s^{tr}, X_s)$ in the first sub-sample with $1 \leq l<s \leq n/2$ such that $X_l=X_s$ (that is, independent measurements of the same matrix entry). For each $(i,j)\in [m_1]\times [m_2]$, let $\mathcal S_{(i,j)} = \{k \in \{1, \dots, n/2\}  : X_k = e_i(m_1)e^{T}_j(m_2)\} =: \{a_1 <...<a_{p_{(i,j)}}\}$ where $p_{(i,j)}$ is the number of times that we observe the entry $(i,j)$. For all indices $(i,j)$ such that $S_{(i,j)}\not=\emptyset$, we form
the $\lfloor p_{(i,j)}/2 \rfloor$ couples  $(X_{a_{1}},X_{a_{2}}),(X_{a_{3}},X_{a_{4}}),\dots \text{etc.}$
% $i = a_u$ with $u$ odd and $j = a_{u+1}$. 
We denote by $\mathcal{N}$ the set of all these pairs and let $\vert \mathcal{N}\vert =N $ be their number. Re-ordering, we can write $(\tilde X_k, Z_k, Z_k')_{k \leq N}$ where $\tilde X_k = X_l=X_s$ for some couple $(X_l,X_s)\in \mathcal{N}$ and $Z_k = Y_l^{tr}$ and $Z_k' = Y_s^{tr}$. That is, using two different samples of the same entry $\tilde X_k = X_l=X_s$ we form the observation triples $(\tilde X_k, Z_k, Z_k')$.
We use $(\tilde X_k, Z_k, Z_k')_{k \leq N}$ to construct a U-Statistic to estimate the squared Frobenius loss. Contrary to the construction in \eqref{residual} this does not require  knowledge of the variance of the errors.
%{\color{red}Let $N$ be the number of elements $X_i \in \mathcal B$ of the first half of the sample $(X_i,Y_i^{Tr})_{1\leq i\leq n/2 }$ that have been sampled at least twice.}
We define:
\begin{equation}
\hat R_N:= \frac{1}{N} \sum_{k=1}^N (Z_k- \langle \hat{M}, \tilde X_k \rangle)(Z_k'- \langle \hat{M}, \tilde X_k \rangle), 
\end{equation}
and we set $\hat{R}_N = 0$ if $N=0$. Note that
\begin{equation} \label{UStatisticExp}
\mathbb{E}_{M_0, \sigma}   \bigg [\hat{R}_N \bigg | \hat{M}, N \geq 1 
\bigg ] = \frac{\| \hat{M} - M_0 \|_F^2}{m_1 m_2}.
\end{equation}
We define the confidence set
\begin{equation} \label{Ustatci}
C_n:= \left \{ A \in \mathcal{A}(a,m):~~ \frac{\| A - \hat M \|_F^2}{m_1 m_2} \leq \hat{R}_N + z_{\alpha,N}\right \}
\end{equation}
where the random quantile constant $z_{\alpha, N}$ is defined as $$z_{\alpha, N}:=\frac{U^2+4a^2}{\sqrt{N  \alpha}}\quad\text{if}\quad N \neq 0\quad\text{and}\quad z_{\alpha, N} =  4a^2 \quad \text{if~ } N=0.$$
The quantity $N$ is random but we can bound it from below with high probability by %$\min \left (\frac{n}{4}$, 
$n^2 / (64m_1 m_2)$ as proven in the following lemma.
\begin{Lemma} \label{Kombinatorikinequality}
	%Assume that $n \leq 8 m_1 m_2$ and {\color{red}$\sqrt{m_1m_2}/n = o(1)$.??} 
	For $n \leq m_1 m_2$ we have with probability at least $1-\exp \left ( -n^2/(372 m_1 m_2 )\right )$ that:
	\begin{equation*}
	N \geq \frac{n^2}{64  m_1 m_2}.
	\end{equation*}
	%for $n \leq 4 m_1 m_2$.
\end{Lemma}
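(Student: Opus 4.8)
The plan is to bound $N$ below by a sum of $\{0,1\}$-valued, negatively associated random variables and then apply a multiplicative Chernoff inequality. Throughout I assume $n\ge 4$ (which holds in all applications of the lemma) and I write $K=m_1m_2$. Recall that $X_1,\dots,X_{n/2}$ are i.i.d.\ uniform on the $K$ matrix units in $\mathcal B$, so the occupation counts $p_{(i,j)}=|\mathcal S_{(i,j)}|$ form a multinomial vector on $K$ cells with $n/2$ trials and equal cell probabilities $1/K$; in particular $p_{(i,j)}\sim\mathrm{Bin}(n/2,1/K)$ marginally. Since block $(i,j)$ contributes exactly $\lfloor p_{(i,j)}/2\rfloor$ couples and $\lfloor t/2\rfloor\ge\mathds{1}\{t\ge 2\}$ for every $t\in\mathbb N_0$,
\begin{equation*}
N \;=\; \sum_{(i,j)}\Big\lfloor \tfrac{p_{(i,j)}}{2}\Big\rfloor \;\ge\; D:=\sum_{(i,j)}\mathds{1}\{p_{(i,j)}\ge 2\},
\end{equation*}
so it suffices to show $D\ge n^2/(64K)$ with the claimed probability.

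First I would lower bound $\mathbb E D$. Since $p_{(i,j)}\sim\mathrm{Bin}(n/2,1/K)$,
\begin{equation*}
\mathbb E D \;=\; K\,\mathbb P\big(\mathrm{Bin}(n/2,\tfrac1K)\ge 2\big) \;\ge\; K\binom{n/2}{2}\frac1{K^2}\Big(1-\tfrac1K\Big)^{n/2-2} \;\ge\; \frac{1}{2K}\binom{n/2}{2}\;\ge\;\frac{n^2}{32K},
\end{equation*}
using $(1-1/K)^{n/2-2}\ge 1-(n/2)/K\ge 1/2$ (as $n\le K$) and $\binom{n/2}{2}\ge (n/2)^2/4=n^2/16$ (as $n/2\ge 2$).

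For the concentration step, each $\mathds{1}\{p_{(i,j)}\ge 2\}$ is a non-decreasing function of the single coordinate $p_{(i,j)}$, and distinct coordinates of a multinomial vector are negatively associated (Joag-Dev--Proschan); hence $(\mathds{1}\{p_{(i,j)}\ge 2\})_{(i,j)}$ is negatively associated and its sum $D$ obeys the same lower-tail Chernoff bound as in the independent case (Dubhashi--Ranjan), i.e.\ $\mathbb P(D\le(1-\delta)\mathbb E D)\le\exp(-\delta^2\mathbb E D/2)$. Taking $\delta=1/2$ and inserting $\mathbb E D\ge n^2/(32K)$,
\begin{equation*}
\mathbb P\Big(D\le \frac{n^2}{64K}\Big)\;\le\;\mathbb P\Big(D\le\frac{\mathbb E D}{2}\Big)\;\le\;\exp\!\Big(-\frac{\mathbb E D}{8}\Big)\;\le\;\exp\!\Big(-\frac{n^2}{256\,K}\Big)\;\le\;\exp\!\Big(-\frac{n^2}{372\,m_1m_2}\Big).
\end{equation*}
On the complementary event we have $N\ge D> n^2/(64\,m_1m_2)$, proving the lemma.

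The step I expect to be delicate is the concentration bound. Because $\mathbb E D$ is only of order $n^2/(m_1m_2)$ — and in the relevant regime $n\le m_1m_2$ this can be a constant, or even much smaller than $\sqrt n$ — an additive bounded-differences/McDiarmid estimate (whose fluctuation scale is $\sqrt n$) is far too weak, and one genuinely needs a multiplicative deviation bound. Reducing $\lfloor p_{(i,j)}/2\rfloor$ to the bounded indicator $\mathds{1}\{p_{(i,j)}\ge 2\}$ and invoking negative association of the multinomial is precisely what makes the Chernoff bound applicable; a Poissonization of the number of trials would be an equally valid substitute at this point. The two elementary inequalities entering the lower bound on $\mathbb E D$ are routine.
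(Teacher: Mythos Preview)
Your proof is correct and takes a genuinely different route from the paper's. The paper splits the first subsample $\{1,\dots,n/2\}$ into two halves of size $n/4$: either the first half already contains $n/8$ pairs, or it contains at least $n/8$ \emph{distinct} sampled positions, and then one counts how many of the remaining $n/4$ draws land in this set of distinct positions, via a coupling to i.i.d.\ Bernoulli$(p')$ variables with $p'\approx n/(8m_1m_2)$ followed by Bernstein's inequality. Your argument instead works directly with the multinomial occupation counts $p_{(i,j)}$: you lower bound $N$ by $D=\sum_{(i,j)}\mathbf 1\{p_{(i,j)}\ge 2\}$, compute $\mathbb E D\ge n^2/(32m_1m_2)$ from the marginal binomial law, and then invoke negative association of the multinomial (Joag-Dev--Proschan) to justify a multiplicative lower-tail Chernoff bound on $D$.

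Your approach is cleaner---it avoids the case split and the somewhat ad hoc conditional coupling in the paper---and even yields a slightly better constant ($256$ in place of $372$). The price is the external input of negative association and the NA Chernoff bound (Dubhashi--Ranjan), whereas the paper's argument is fully self-contained using only Bernstein. Both approaches correctly recognise that one needs a multiplicative rather than additive deviation bound, since $\mathbb E D$ can be of constant order when $n\asymp m_1m_2$.
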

Markov's inequality, \eqref{UStatisticExp},  Lemma \ref{Kombinatorikinequality} and that $\hat{M}$ achieves the nearly optimal rate \eqref{upper_bound_matrix_lasso} with $\mathbb{P}_{M_0, \sigma}$-probability of at least $1-2/d$ imply for any $k \le m$, any $M_0 \in \mathcal{A}(a,k)$,  any $\sigma \leq U$, any $\alpha' > 2/d + \exp(-n^2/(372m_1m_2))$ and a large enough constant $C=C(\alpha, \alpha',\sigma, a, U) > 0$ that \begin{equation}
\mathbb{P}_{M_0, \sigma} \left (\frac{|C_n|_F^2}{m_1m_2} > C  \frac{k d\log(d)}{n} \right ) \leq \alpha'.
\end{equation}
Since $k$ is arbitrary this implies that $C_n$ is a confidence set whose $\|\cdot\|_F^2$-diameter adapts to the unknown rank of $M_0$ without requiring the knowledge of $\sigma \in \Sigma$. The following theorem implies that $C_n$ is also a honest confidence set.
Note that our result is non-asymptotic and holds for any triple $(n,m_1,m_2) \in \mathbb{N}^3$ as long as $m\log d \leq n \leq  m_1 m_2$.
\begin{Theorem} \label{traceustatistic} Let $\alpha > 0$ be given, assume $m \log(d) \leq n \leq  m_1 m_2$ and that Assumption~\ref{noise_boundedTR} is fulfilled. Let $C_n=C_n(Y,\alpha)$ as in (\ref{Ustatci}). Then $C_n$ satisfies for any $M_0 \in \mathcal{A}(a,m)$ and any $\sigma \in \Sigma$
	\begin{align*}% \label{Varianceestimate}
	\mathbb{P}_{M_0, \sigma} \left (  M_0 \in C_n\right )  \geq 1- \alpha.
	\end{align*}
	%In addition, if $m \log^3(d) \leq n \leq m_1 m_2$, and either %$\inf_{\sigma \in \Sigma} \sigma \asymp U$ or $U \lesssim a$ holds, then %$C_n$ is also adaptive at level $\alpha'$ for any sub-model %$\mathcal{A}(a,k_0) \otimes \Sigma$ where $1 \leq k_0 < k \leq m$. 
	Hence, for any $\alpha' > 2/d +  \exp (-n^2/(372m_1m_2))$ and any $1 \leq k_0 < k \leq m$, $C_n$ is a honest confidence set at level $\alpha$  for the model $\mathcal{A}(a,k) \otimes \Sigma$ that adapts (up to a log-factor) to the rank $k_0$ of any sub-model $\mathcal{A}(a,k_0) \otimes \Sigma$ at level $\alpha'$.
	
	%$C_n$ has coverage of $M$ with at least probability $1-\alpha$.\olga{-probability of $N=0$??}
\end{Theorem}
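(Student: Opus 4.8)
\medskip
\noindent\textit{Proof strategy.} The plan is to establish the coverage bound $\pp_{M_0,\sigma}(M_0\in C_n)\ge 1-\alpha$; the adaptive near-minimax diameter property was already obtained before the statement (via Markov's inequality, \eqref{UStatisticExp}, Lemma~\ref{Kombinatorikinequality} and the matrix lasso rate), so the ``Hence'' part follows once coverage is shown. Write $\delta_{ij}=(M_0)_{ij}-\hat M_{ij}$ and $\theta=\|M_0-\hat M\|_F^2/(m_1m_2)=(m_1m_2)^{-1}\sum_{i,j}\delta_{ij}^2$. Since $M_0\in\mathcal A(a,m)$ by hypothesis, $M_0\in C_n$ is equivalent to $\theta\le\hat R_N+z_{\alpha,N}$, so it suffices to bound $\pp_{M_0,\sigma}(\theta-\hat R_N>z_{\alpha,N})$. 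All steps are carried out conditionally on the second sub-sample, hence on $\hat M$; as $\hat M$ is a constrained matrix lasso estimator with $\|\hat M\|_\infty\le a$, combined with $\|M_0\|_\infty\le a$ this gives the deterministic bound $\theta\le\|M_0-\hat M\|_\infty^2\le 4a^2$. In particular, on $\{N=0\}$ we have $\hat R_N=0$ and $z_{\alpha,N}=4a^2\ge\theta$, so $M_0\in C_n$ surely; only $\{N\ge 1\}$ contributes. (Note that Lemma~\ref{Kombinatorikinequality} is not needed for coverage, only for the diameter statement.)

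On $\{N\ge 1\}$ I would split over the value $N=N_0$ and apply a conditional second-moment (Chebyshev) bound with the \emph{random} threshold $z_{\alpha,N_0}=(U^2+4a^2)/\sqrt{N_0\alpha}$. Two structural facts drive this. First, conditionally on $\hat M$ and on the first-sub-sample positions $(X_i)_{i\le n/2}$, the summands of $\hat R_N$, namely $(Z_k-\langle\hat M,\tilde X_k\rangle)(Z_k'-\langle\hat M,\tilde X_k\rangle)$, are independent across $k$ (distinct couples use disjoint observations), and equal $(\delta^{(k)}+\eta_k)(\delta^{(k)}+\eta_k')$, where $\delta^{(k)}$ is the value of $\delta_{ij}$ at the entry of couple $k$ (so $|\delta^{(k)}|\le 2a$) and $\eta_k,\eta_k'$ are the two independent mean-zero variance-$\sigma^2$ noises; hence each summand has conditional mean $(\delta^{(k)})^2$ and conditional variance $2\sigma^2(\delta^{(k)})^2+\sigma^4$. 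Second, since the positions are i.i.d.\ uniform and $\{N=N_0\}$ is invariant under permutations of the entries, the counts $q_{ij}:=\lfloor p_{(i,j)}/2\rfloor$ are exchangeable conditionally on $\{\hat M,N=N_0\}$, so $\ee[q_{ij}\mid\hat M,N=N_0]=N_0/(m_1m_2)$ for every $(i,j)$; this refines \eqref{UStatisticExp} to $\ee_{M_0,\sigma}[\hat R_N\mid\hat M,N=N_0]=\theta$ for every $N_0\ge 1$, which removes the conditional bias. Chebyshev then yields $\pp(\theta-\hat R_N>z_{\alpha,N_0}\mid\hat M,N=N_0)\le N_0\alpha\,\Var(\hat R_N\mid\hat M,N=N_0)/(U^2+4a^2)^2$, and summing over $N_0\ge 1$ reduces the theorem to
\begin{equation}\label{varclaim}
\Var\big(\hat R_N\mid\hat M,N=N_0\big)\le\frac{(U^2+4a^2)^2}{N_0},\qquad N_0\ge 1.
\end{equation}

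To prove \eqref{varclaim} I would split it by the law of total variance relative to the positions into a ``noise'' part $\ee[\Var(\hat R_N\mid\hat M,(X_i)_{i\le n/2})\mid\hat M,N=N_0]$ and a ``sampling'' part $\Var(\mu\mid\hat M,N=N_0)$ with $\mu:=\ee[\hat R_N\mid\hat M,(X_i)_{i\le n/2}]=N_0^{-1}\sum_{i,j}q_{ij}\delta_{ij}^2$. By the per-summand formula above and $\ee[q_{ij}\mid\hat M,N=N_0]=N_0/(m_1m_2)$, the noise part equals $(2\sigma^2\theta+\sigma^4)/N_0\le(8a^2U^2+U^4)/N_0$ (using $\theta\le 4a^2$, $\sigma\le U$). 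For the sampling part, since the $q_{ij}$ are exchangeable with fixed sum $N_0$ conditionally on $\{\hat M,N=N_0\}$, every off-diagonal covariance $\Cov(q_{ij},q_{kl}\mid\hat M,N=N_0)$ equals $-v_0/(m_1m_2-1)\le 0$, where $v_0:=\Var(q_{11}\mid\hat M,N=N_0)$; feeding this into $\Var(\mu\mid\hat M,N=N_0)$ and using Cauchy--Schwarz in the form $(\sum_{ij}\delta_{ij}^2)^2\le m_1m_2\sum_{ij}\delta_{ij}^4$ together with $\sum_{ij}\delta_{ij}^4\le 4a^2\sum_{ij}\delta_{ij}^2$ reduces the bound $\Var(\mu\mid\hat M,N=N_0)\le 16a^4/N_0$ to the single ``sub-Poissonian'' estimate
$$v_0\le\Big(1-\tfrac{1}{m_1m_2}\Big)\,\ee[q_{11}\mid\hat M,N=N_0].$$
This last inequality is the main obstacle: it asserts that conditioning the counts $\lfloor p_{(i,j)}/2\rfloor$ on their total does not push the per-coordinate variance above the per-coordinate mean. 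I would prove it by a direct second-moment computation exploiting that in the admissible regime $n\le m_1m_2$ the per-entry sampling intensity is at most $1/2$, so that $\lfloor p_{(i,j)}/2\rfloor$ behaves essentially like a Bernoulli variable (e.g.\ via a Poissonization argument making the counts independent before conditioning on $N$). Granting it, the two parts add to exactly $(8a^2U^2+U^4+16a^4)/N_0=(U^2+4a^2)^2/N_0$, which is \eqref{varclaim}; assembling the chain gives $\pp_{M_0,\sigma}(M_0\notin C_n\mid\hat M)\le\alpha$, integrating over $\hat M$ finishes coverage, and the adaptivity conclusion is then immediate from the diameter bound displayed before the statement.
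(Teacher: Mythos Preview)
Your overall strategy coincides with the paper's: both dispose of $\{N=0\}$ trivially via $z_{\alpha,0}=4a^2\ge\theta$, then condition on $\{N=N_0\ge 1\}$ and apply Chebyshev, aiming for $\Var(\hat R_N\mid\hat M,N=N_0)\le(U^2+4a^2)^2/N_0$. The paper reaches this in one line, writing
\[
\Var(\hat R_N\mid N,N>0)=\tfrac{1}{N}\,\ee\big[(W_1-r)^2\mid N\big],\qquad W_k=(Z_k-\langle\hat M,\tilde X_k\rangle)(Z'_k-\langle\hat M,\tilde X_k\rangle),
\]
and then bounding the single-term second moment by $(U^2+4a^2)^2$. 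Your law-of-total-variance route splits the left side into a noise part $(2\sigma^2\theta+\sigma^4)/N_0$ and a sampling part $\Var(\mu\mid N_0)$; the two pieces add back to exactly $(U^2+4a^2)^2/N_0$.

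The difference is not merely cosmetic. The paper's displayed equality treats the $W_k$ as uncorrelated given $N$, which is not automatic: since several couples can sit at the same entry (whenever $p_{(i,j)}\ge 4$), the positions $\tilde X_k$ are dependent given $N$, and in small examples the paper's ``$=$'' is actually a strict ``$<$''. What is needed for the inequality is precisely $\Cov(\mathbf 1\{\tilde X_1=(i,j)\},\mathbf 1\{\tilde X_2=(i,j)\}\mid N)\le 0$, and a short computation shows this is \emph{equivalent} to your sub-Poissonian bound $v_0\le(1-\tfrac{1}{m_1m_2})\,\ee[q_{11}\mid N]$. So you have correctly isolated the one nontrivial step and made explicit what the paper's one-line variance formula glosses over.

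That said, you do not prove this step: the Poissonization sketch is not a proof, and the inequality is not a standard fact about multinomial counts after applying $\lfloor\cdot/2\rfloor$ and conditioning on their total. Until it is established --- or the construction is modified to keep at most one couple per entry, which forces $q_{ij}\in\{0,1\}$, makes the inequality trivial, and still yields $N\gtrsim n^2/(m_1m_2)$ --- your argument, like the paper's, has a gap at exactly this point.
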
 

%Hence we have coverage i.e.~$\mathbb{P} (  M_0 \in C_n ) \geq 1 - \alpha$, and using Equation~\eqref{upper_bound_matrix_lasso}, we have for any $M_0 \in \mathcal A(a,k)$ and under the conditions of Proposition~\ref{prop:klopp}
%\begin{equation}\label{eq:covTR}
%\frac{\mathbb{E} \| C_n \|_2^2}{m_1 m_2} \lesssim \frac{(U + a)^4k d \log(d)}{n}.
%\end{equation}
%So, the confidence set $C_n$ given by \eqref{Ustatci} is adaptive and honest over $\mathcal A(a,k)$.
%\noindent This solves the question about the existence of adaptive confidence sets in the trace regression model.

\section{Bernoulli Model \label{sec:Bernoulli}}
In this section we consider the Bernoulli model \eqref{Bernoulli_model}. As before we let $\mathbb P_{M,\sigma}$ (and $\mathbb E_{M,\sigma}$ for the corresponding expectation) denote the distribution of the data when the parameters are $M$ and $\sigma$, and we sometimes write $M_0$ for the `true' parameter $M$ for the sake of precision.

\subsection{A non-asymptotic confidence set in the Bernoulli model with known variance of the errors.}
Here we assume again that $\sigma > 0$ is known. In case of the Bernoulli model we are not able to obtain two independent samples and cannot use the risk estimation approaches from the trace-regression setting. Instead we use the duality between testing and honest and adaptive confidence sets laid out in Section \ref{GeometryConfidence}. We first determine an upper bound for the minimax rate $\rho=\rho_{n,m}$ of testing the low rank hypothesis
\begin{equation} \label{CompositeTestingProblem} H_0: M \in \mathcal{A}(a,k_0)~~\text{against}~~H_1: M \in \mathcal{A}(a,k),~~\| M - \mathcal{A}(a,k_0) \|_F^2 \geq \rho^2, \end{equation}
and then apply Theorem \ref{Thm_Existence}. As test statistic, we propose an infimum-test which has previously been used by Bull and Nickl \cite{BullNickl} and Nickl and van de Geer \cite{NicklVandegeer} in density estimation and high-dimensional regression, respectively (see also Section 6.2.4. in \cite{Ginenickl}). Since $\sigma^2 = \mathbb{E}\epsilon_{ij}^2$ is known we can define the statistic
\begin{equation} \label{Infimumteststatistic}
T_n:= \inf_{A \in \mathcal{A}(a,k_0)} \left | \frac{1}{\sqrt{2n}} \sum_{i,j}B_{ij}\left ((Y_{ij}-A_{ij})^2-\sigma^2\right ) \right |  = \inf_{A \in \mathcal{A}(a,k_0)} \left | \frac{1}{\sqrt{2n}} \sum_{i,j} \left ( (Y_{ij}-B_{ij}A_{ij})^2-B_{ij}\sigma^2\right ) \right |
\end{equation}
and choose the quantile constant $u_\alpha$ such that
\begin{equation} \label{Quantileconstant3}
\mathbb{P}_{ \sigma}  \left ( \frac{1}{\sqrt{2n}} \left | \sum_{i,j} B_{ij}(\epsilon_{ij}^2-\mathbb{E}\epsilon_{ij}^2) \right | > u_\alpha \right ) \leq \alpha/3.
\end{equation}
For example, using Markov's inequality, we obtain
\begin{align} 
\mathbb{P}_{ \sigma} \left ( \frac{1}{\sqrt{2n} }\left | \sum_{i,j} B_{ij}(\epsilon^{2}_{ij}-\sigma^2)\right | > u_\alpha\right ) & \leq \frac{1}{2n u^2_\alpha}\sum_{i,j} \Var_\sigma \left ( B_{ij}(\epsilon_{ij}^2-\sigma^2)\right )\notag  \leq \frac{\sigma^2 (U^2-\sigma^2)}{2u^2_\alpha} \notag
\end{align}
so $		u_\alpha = \sigma \sqrt{\big (3(U^2-\sigma^2) \big )/ (2\alpha)} \notag	$
is an admissible choice. 
\begin{Theorem} \label{Thm_CompositeTesting} Let $\alpha \geq 12 \exp(-100d)$ be given. Consider the Bernoulli model \eqref{Bernoulli_model} and the two parameter spaces $\mathcal{A}(a,k)$ and $\mathcal{A}(a,k_0)$, $1 \leq k_0 < k \leq m$. Furthermore assume that Assumption \ref{noise_boundedTR} is fulfilled, that $\sigma > 0$ is known, that $n \geq m \log (d)$ and consider the testing problem \eqref{CompositeTestingProblem}. Suppose $$
	\rho^2 \geq C \frac{ m_1 m_2 k_0d}{n}
	% \frac{\sigma^2}{p} \max \left ( 512\cdot (4096)^2 (C^*)^2 (U+1)^2a^2k_0d, 8192(32776+18a^2U)d, 1024 d u_\alpha\right ) %
	\asymp r^2_{n,m}(\mathcal{A}(a,k_0))$$ 
	where $C=C(\alpha, a, U, \sigma) > 0$ is a constant. Then the test
	$
	\Psi_n:=\mathbf{1}_{  \{ T_n > u_\alpha \}}
	$
	where $u_\alpha$ is the quantile constant in \eqref{Quantileconstant3} and $T_n$ is as in $\eqref{Infimumteststatistic}$ fulfills
	$$  \sup_{M \in \mathcal{A}(a,k_0)} \mathbb{E}_{M, \sigma} [\Psi_n ]+
	\sup_{M \in \mathcal{A}(a,k),~ \| M - \mathcal{A}(a,k_0)\|_F^2 \geq \rho^2}  \mathbb{E}_{M, \sigma} [1-\Psi_n] %\leq 2\alpha/3 + 10 \exp(-4d) 
	\leq \alpha.$$
\end{Theorem}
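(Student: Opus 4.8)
The plan is to control the two error probabilities separately, the first being immediate and the second carrying all the work. Throughout I write $g(A):=\sum_{i,j}\big((Y_{ij}-B_{ij}A_{ij})^2-B_{ij}\sigma^2\big)$, so that $T_n=\inf_{A\in\mathcal A(a,k_0)}|g(A)|/\sqrt{2n}$, and I record the exact identity $g(A)=\sum_{ij}B_{ij}\big((M-A)_{ij}^2+2(M-A)_{ij}\epsilon_{ij}+\epsilon_{ij}^2-\sigma^2\big)$, valid for every $A$ (using $Y_{ij}=B_{ij}(M_{ij}+\epsilon_{ij})$ and $B_{ij}^2=B_{ij}$). For the type-I error, if $M\in\mathcal A(a,k_0)$ I would take $A=M$ in the infimum, so that $T_n\le|g(M)|/\sqrt{2n}=\big|\sum_{ij}B_{ij}(\epsilon_{ij}^2-\sigma^2)\big|/\sqrt{2n}$; the defining property \eqref{Quantileconstant3} of $u_\alpha$ then gives $\sup_{M\in\mathcal A(a,k_0)}\mathbb E_{M,\sigma}[\Psi_n]\le\alpha/3$. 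It remains to bound the type-II error by $2\alpha/3$.

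For the type-II error I would fix $M\in\mathcal A(a,k)$ with $\delta^2:=\|M-\mathcal A(a,k_0)\|_F^2\ge\rho^2$ and a fixed minimiser $A^\ast\in\mathcal A(a,k_0)$ with $\|M-A^\ast\|_F=\delta$ (which exists by compactness of $\mathcal A(a,k_0)$), and show that on an event of probability at least $1-2\alpha/3$ one has $g(A)>\sqrt{2n}\,u_\alpha$ for \emph{all} $A\in\mathcal A(a,k_0)$, whence $T_n>u_\alpha$ and $\Psi_n=1$. Writing $p=n/(m_1m_2)$, $W_{ij}=B_{ij}\epsilon_{ij}$, $D=M-A$, and using $\sum_{ij}B_{ij}D_{ij}^2=p\|D\|_F^2+\langle B-p\mathbf 1,D^{\circ2}\rangle$ (entrywise square) together with $\langle D,W\rangle=\langle M-A^\ast,W\rangle+\langle A^\ast-A,W\rangle$, I would decompose
\begin{equation*}
g(A)=p\|D\|_F^2+\langle B-p\mathbf 1,D^{\circ2}\rangle+2\langle M-A^\ast,W\rangle+2\langle A^\ast-A,W\rangle+\sum_{ij}B_{ij}(\epsilon_{ij}^2-\sigma^2).
\end{equation*}
The last term equals $g(M)$, hence is $\ge-\sqrt{2n}\,u_\alpha$ on the event of probability $\ge1-\alpha/3$ used for the type-I bound; the term $2\langle M-A^\ast,W\rangle$ is a single random variable of variance $4p\sigma^2\delta^2\le4n\sigma^2a^2$ (since $\delta\le\|M\|_F\le a\sqrt{m_1m_2}$, using $0\in\mathcal A(a,k_0)$), so by Chebyshev it is $\le C_1(\alpha)\,a\sigma\sqrt n$ in modulus off an event of probability $\le\alpha/9$.

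The two remaining terms are mean-zero processes in $A$, and here is the crucial point: although each $D=M-A$ has rank up to $k+k_0$, the index set $M-\mathcal A(a,k_0)$ (resp. $A^\ast-\mathcal A(a,k_0)$) is a \emph{translate} of $\mathcal A(a,k_0)$, hence of $\|\cdot\|_F$-metric entropy only of order $k_0 d\log(\cdot)$. I would run a peeling argument over the scale $t=\|M-A\|_F\in[\delta,\,2a\sqrt{m_1m_2}]$: on the dyadic shell $2^j\delta\le t<2^{j+1}\delta$ one has $\|A^\ast-A\|_F\le2^{j+2}\delta$, and a standard chaining/Bernstein bound for the processes $D\mapsto\langle B-p\mathbf 1,D^{\circ2}\rangle$ and $\Delta A\mapsto\langle\Delta A,W\rangle$ over a Frobenius ball of radius $\lesssim2^j\delta$ and entropy $\lesssim k_0 d$ would give, off an event of probability $\le\alpha/(9\cdot2^{j+1})$,
\begin{equation*}
\sup_{\text{shell }j}\big|\langle B-p\mathbf 1,D^{\circ2}\rangle\big|\lesssim\sqrt p\,a\,2^j\delta\sqrt{k_0 d}+a^2k_0 d,\qquad
\sup_{\text{shell }j}\big|\langle A^\ast-A,W\rangle\big|\lesssim\sqrt p\,\sigma\,2^j\delta\sqrt{k_0 d}+aU k_0 d,
\end{equation*}
the additive terms being independent of $t$ because the $\|\cdot\|_\infty$-diameters of the index sets stay bounded by $4a$. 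Intersecting these events over the $O(\log(m_1m_2))$ non-empty shells with the two events above, I would obtain, with probability $\ge1-2\alpha/3$, that for every $A$ with $t=\|M-A\|_F\ge\rho$,
\begin{equation*}
g(A)\ \ge\ pt^2-C_2\Big(\sqrt p(a+\sigma)\,t\sqrt{k_0 d}+(a^2+aU)k_0 d+a\sigma\sqrt n\Big)-2\sqrt{2n}\,u_\alpha.
\end{equation*}
Since $p\rho^2=C\,k_0 d$ by hypothesis, $\sqrt n\le\sqrt{m_1m_2}\le d/2\le k_0 d$ and $\sqrt{2n}\,u_\alpha\lesssim_{\alpha,\sigma,U}\sqrt n$, the subtracted quantity is at most $\sqrt p(a+\sigma)t\sqrt{k_0 d}$ (which is $\le\tfrac14 pt^2$ once $pt^2\ge16(a+\sigma)^2k_0 d$, i.e. $C\ge16(a+\sigma)^2$) plus a term $\lesssim_{a,U,\sigma,\alpha}k_0 d$ (which is $<\tfrac14 pt^2$ once $C$ is large enough in terms of $\alpha,a,U,\sigma$); hence $g(A)>\sqrt{2n}\,u_\alpha$ as required, and the two bounds add to $\le\alpha$.

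The main obstacle is precisely the uniform control of $\langle B-p\mathbf 1,D^{\circ2}\rangle$ and $\langle A^\ast-A,W\rangle$: a naive estimate of $\sup_A|\langle M-A,W\rangle|$ or $\sup_A|\langle B-p\mathbf 1,(M-A)^{\circ2}\rangle|$ through trace/operator norms and the rank-$2k$ bound $\|\cdot\|_*\le\sqrt{2k}\,\|\cdot\|_F$ would lose a factor $\sqrt k$ (and, for the Hadamard-square term, a factor $\sqrt m$) and would only prove the theorem with the much larger separation $\rho^2\asymp m_1m_2\,kd/n$. Getting the $k_0$ instead requires two things: exploiting that the relevant index sets are translates of $\mathcal A(a,k_0)$ — which pins their complexity at $k_0 d$ — and peeling off the single \emph{fixed} random variable $\langle M-A^\ast,W\rangle$ from the genuinely $A$-dependent fluctuation $\langle A^\ast-A,W\rangle$, whose size then scales linearly with $t=\|M-A\|_F$ and is therefore dominated by the quadratic term $pt^2$. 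A secondary, purely technical point is the probabilistic bookkeeping: the hypothesis $\alpha\ge12e^{-100d}$ guarantees $\log(1/\alpha)\lesssim d\le k_0 d$, so that the $\log(1/\alpha)$ deviation corrections in the chaining bounds and the union over the shells are absorbed into the $k_0 d$ terms after enlarging $C$.
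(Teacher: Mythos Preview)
Your approach is correct and essentially matches the paper's: both handle type I by taking $A=M$ in the infimum, and for type II both show $g(A)>\sqrt{2n}\,u_\alpha$ uniformly over $A\in\mathcal A(a,k_0)$ via a peeling argument that exploits the $k_0 d$ complexity of (translates of) $\mathcal A(a,k_0)$, together with Talagrand-type concentration for the fluctuation $\langle B-p\mathbf 1,(M-A)^{\circ2}\rangle$ and the cross term $\langle W,M-A\rangle$. The only cosmetic differences are that the paper packages the first bound as a separate RIP-type lemma and performs the split $M-A=(M-A_0)+(A_0-A)$ inside its concentration lemma (via symmetrization, contraction and an operator-norm bound on $(B_{ij}\epsilon_{ij})$) rather than at the top level with your fixed projection $A^\ast$ and a Chebyshev bound on the single variable $\langle M-A^\ast,W\rangle$.
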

Now in order to apply Theorem \ref{Thm_Existence} we use the soft-thresholding estimator proposed by Koltchinskii et. al. \cite{Koltchinskiietal} which satisfies the oracle inequality \eqref{Oracleinequality} up to a log-factor in the trace regression model. That this holds in the Bernoulli-model as well with $\mathbb{P}_{M_0, \sigma}$-probability of at least $1-1/d$ can be proven in a similar way and we sketch this in Proposition \ref{OracleBernoulliThm}, removing the $\log$-factor by using stronger bounds on the spectral norm of the stochastic term $(B_{ij}\epsilon_{ij})_{i,j}$. 

This and Theorem \ref{Thm_CompositeTesting} imply, using Theorem \ref{Thm_Existence}, that there exist honest and adaptive confidence sets in the Bernoulli model if the variance of the errors is known.
\begin{Corollary} Let $\alpha \geq 2/d$ and $\alpha' \geq 12 \exp(-100d)$ be given. Suppose that $\sigma > 0$ is known, that Assumption \ref{noise_boundedTR} is fulfilled and that $n \geq m \log(d)$. Then, for any $1 \leq k_0 < k \leq m$, there exists a honest  confidence set $C_n$ at the level $\alpha$ for the model $\mathcal{A}(a,k) \otimes \{ \sigma \}$, i.e., for any $M_0 \in \mathcal A(a, k)$, $$\mathbb{P}_{M_0, \sigma} \left ( M_0 \in C_n \right ) \geq 1-\alpha,$$ and $C_n$ adapts to the sub-model $\mathcal{A}(a,k_0) \otimes \{ \sigma \}$ at level $\alpha'$.

\end{Corollary}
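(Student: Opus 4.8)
The plan is to verify that the hypotheses of Theorem~\ref{Thm_Existence} are met in the Bernoulli model with known $\sigma$, so that its conclusion delivers the desired honest and adaptive confidence set $C_n$. There are three ingredients to check: (i) existence of a $\min(\alpha/2,\alpha')$-uniformly consistent test for the separation problem \eqref{SeparationTest} with $\mathcal A_0 = \mathcal A(a,k_0)\otimes\{\sigma\}$, $\mathcal A = \mathcal A(a,k)\otimes\{\sigma\}$; (ii) the ordering $\rho_n \leq C' r_n(\mathcal A_0)$ of testing and estimation rates; and (iii) existence of an oracle estimator at level $\alpha/2$ satisfying \eqref{Oracleinequality}. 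I would take these in turn.

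For (i), I would invoke Theorem~\ref{Thm_CompositeTesting} directly: with the prescribed choice $\rho^2 \geq C m_1m_2k_0d/n$ and quantile constant $u_\alpha$ from \eqref{Quantileconstant3}, the infimum-test $\Psi_n = \mathbf 1_{\{T_n>u_\alpha\}}$ achieves combined testing error at most $\alpha'$ (after replacing $\alpha$ there by $\min(\alpha/2,\alpha')$, which only changes constants and is admissible since $\alpha' \geq 12\exp(-100d)$ and $\alpha\geq 2/d$ both dominate $12\exp(-100d)$ up to the constant in $C$). Since $\mathcal D$ here is the $\|\cdot\|_F$-distance on the full matrix — recall the normalisation in \eqref{mrisk} uses $\|\cdot\|_F^2/(m_1m_2)$ — I must be careful to express $\rho_n$ in the same normalisation throughout; with that bookkeeping, Theorem~\ref{Thm_CompositeTesting} shows the minimax testing rate $\rho_n$ satisfies $\rho_n^2 \asymp r_n^2(\mathcal A_0) \asymp m_1m_2 k_0 d/n$, which simultaneously gives (ii) with $C'$ depending only on $(\alpha,\alpha',a,U,\sigma)$.

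For (iii), I would appeal to Proposition~\ref{OracleBernoulliThm}: the Koltchinskii et al.\ soft-thresholding estimator $\hat M$ satisfies the oracle inequality \eqref{Oracleinequality} (with the two-element family $\{\mathcal A,\mathcal A_0\}$ of rank classes) with $\mathbb P_{M_0,\sigma}$-probability at least $1-1/d$, hence at level $\alpha/2$ provided $\alpha \geq 2/d$, which is assumed. Here the only subtlety is that the proposition is stated for the trace-regression construction, so I would note that in the Bernoulli model the stochastic term is $(B_{ij}\epsilon_{ij})_{i,j}$ and its spectral norm obeys a matrix Bernstein bound without the extra $\log$-factor, as indicated in the text — this is exactly what keeps the oracle rate at $r_n(\mathcal A_0) \asymp \sqrt{m_1m_2 k_0 d/n}$ rather than a log-inflated version, matching $\rho_n$ as required for $\rho_n \leq C' r_n(\mathcal A_0)$.

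With (i)--(iii) in hand, Theorem~\ref{Thm_Existence} yields a set $C_n$ that is honest at level $\alpha$, i.e.\ $\sup_{M_0\in\mathcal A(a,k)}\mathbb P_{M_0,\sigma}(M_0\notin C_n)\leq\alpha$, and whose $\|\cdot\|_F$-diameter adapts to $\mathcal A(a,k_0)\otimes\{\sigma\}$ at level $\alpha'$ in the sense of \eqref{adaptivity}--\eqref{adaptivity1}. The main obstacle I anticipate is not conceptual but bookkeeping: matching the normalisations and the various admissibility constraints on $\alpha,\alpha'$ (the thresholds $2/d$ from the estimator's exceptional event and $12\exp(-100d)$ from the testing theorem) so that the hypotheses of Theorem~\ref{Thm_Existence} genuinely hold with a single constant $C'=C'(\alpha,\alpha',a,U,\sigma)$; once that is pinned down, the Corollary follows by direct citation.
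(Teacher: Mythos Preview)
Your proposal is correct and follows essentially the same route as the paper: the Corollary is deduced directly from Theorem~\ref{Thm_Existence} by combining the uniformly consistent infimum test of Theorem~\ref{Thm_CompositeTesting} (which simultaneously yields $\rho_n \asymp r_n(\mathcal A_0)$) with the oracle estimator of Proposition~\ref{OracleBernoulliThm}. The only minor slip is that Proposition~\ref{OracleBernoulliThm} is already formulated for the Bernoulli model, so no additional adaptation from the trace-regression setting is needed there.
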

\subsection{The case of the Bernoulli model with unknown error variance.}
In this subsection we assume again, as in Subsection 5.2, that the precise knowledge of the error variance $\sigma$ is \textit{not} available. Whereas in this case for the trace-regression model the construction of honest and adaptive confidence set was seen to be possible, we will  now show that this is not the case for the Bernoulli model. We use again the duality between testing and confidence sets, this time applying Theorem \ref{Thm_NonExistence}. The next theorem gives a lower bound for the minimax rate of testing for the composite null hypothesis $H_0: M \in \mathcal{A}(a,k_0)$ of $M$ having rank at most $k_0$ against a rank-$k$ alternative. To simplify the exposition we will consider only square matrices (but see the remark below) and also an asymptotic `high-dimensional' framework where $\min (n,m) \to \infty$ and $k_0 = o(k)$. We formally allow for $k_0=0$, thus including the `signal detection problem' when $H_0: M=0, \sigma^2=1$. 

\begin{Theorem}\label{thm:main}
	Suppose that Assumption~\ref{noise_boundedTR} is satisfied for some $U \geq 2$  and assume $m=m_1=m_2$. Furthermore, 
	let $k=k_{n,m} \to \infty $ be such that $0 < k \leq m^{1/3}$ and $k^{1/4}\sqrt{m/n} < \min(1, a)/2$. For $0 \leq k_0<k$ satisfying $k_0=o(k)$ and a sequence $\rho=\rho_{n,m} \in (0,1/2)$ consider the testing problem
	\begin{equation} \label{test1}
	H_0 : M \in \mathcal{A}(a,k_0),~ \sigma^2 = 1 \quad \text{vs} \quad H_1 : M \in \mathcal{A}(a,k), ~\|M-\mathcal{A}(a,k_0)\|_F^2 \geq m^2\rho^2, ~\sigma^2 = 1 - 4\rho^2. \end{equation}
	%\begin{align}\label{test1}
	%H_0 : M = 0,~ \sigma^2 = 1 \quad \text{against} \quad H_1 : M \in %\mathcal{A}(a,k), ~\|M\|_F^2 \geq m^2\rho^2,~ \sigma^2 = 1 - \rho^2.
	%\end{align}
	If as $\min (n,m) \to \infty$, % there exists an absolute constant $\beta_0 \in (1/2,1]$ such that 
	\begin{equation} \label{SeparationRateUnknown}
	\rho^2 =o \Big(\frac{\sqrt{k}m}{n} \Big),
	\end{equation}
	then for any test $\Psi$ we have that
	\begin{equation}
	\liminf_{\min(n,m) \rightarrow \infty} \left [\sup_{M \in \mathcal{A}(a,k_0)} \mathbb{E}_{M, 1} [\Psi] + \sup_{M \in \mathcal{A}(a,k), ~\|M-\mathcal{A}(a,k_0)\|_F^2 \geq m^2\rho^2} \mathbb{E}_{M, \sqrt{1-4\rho^2}} [1-\Psi] \right ] \geq 1.
	\end{equation}
	In particular,  if $\Sigma \subset (0,U]$ contains the interval $[\sqrt{1-4\tau}, 1 ]$ where $\tau = \limsup_{n,m} k^{1/4} \sqrt {m/n},$ 
	then \eqref{testbeta0} holds for the choices $\mathcal{A}_0=\mathcal{A}(a,k_0) \otimes \Sigma, \mathcal{A}=\mathcal{A}(a,k) \otimes \Sigma$ and $\beta_0=1, \rho^*=\rho$. 
\end{Theorem}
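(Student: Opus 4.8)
The plan is to construct a prior on the alternative that is statistically indistinguishable from the null using a classical two-point / Le Cam mixture argument, and to exploit the interplay between the rank-$k$ signal and the variance reduction $\sigma^2 = 1-4\rho^2$. The key point is that a rank-$k$ perturbation $M$ with $\|M\|_F^2 = m^2\rho^2$ distributes roughly $\rho^2$ units of "energy per entry" onto the matrix, while simultaneously shrinking the noise variance by exactly $4\rho^2$; the idea is that under the Bernoulli sampling scheme, where each entry is observed at most once, the observation $Y_{ij} = B_{ij}(M_{ij}+\epsilon_{ij})$ has, entrywise, total second moment $\mathbb E[(M_{ij}+\epsilon_{ij})^2] = M_{ij}^2 + \sigma^2$, and the choice of the variance gap is designed so that to leading order this matches $1 = \mathbb E[\epsilon_{ij}^2]$ under $H_0$ when one averages $M_{ij}^2$ over a suitable prior. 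Thus the first two moments of the per-entry observations are matched, and only higher-order/quartic effects — of size $\rho^4$ per entry, i.e.\ $m^2\rho^4$ total, which is $o(1)$ precisely under $\rho^2 = o(\sqrt k m / n)$ after accounting for the Bernoulli thinning factor $p = n/m^2$ — can distinguish the hypotheses.

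First I would fix a prior $\Pi$ on $\mathcal A(a,k)$: take $M = \rho\, \sum_{\ell=1}^{k} \xi_\ell\, u_\ell v_\ell^T$ (or a Rademacher-weighted random rank-$k$ matrix with entries of order $\rho/\sqrt{\cdot}$ chosen so that $\|M\|_\infty \le a$ — here the hypotheses $k \le m^{1/3}$ and $k^{1/4}\sqrt{m/n} < \min(1,a)/2$ are exactly what guarantee both the boundedness constraint and $\|M\|_F^2 \asymp m^2\rho^2$ with high probability, so the prior is supported on the alternative up to a negligible event). I would then bound the testing error from below by $1 - \|\mathbb P_{0,1} - \int \mathbb P_{M,\sqrt{1-4\rho^2}}\, d\Pi(M)\|_{TV}$, and control this TV (or, more conveniently, the $\chi^2$-divergence) by the standard Ingster-type expansion $\chi^2(\mathbb P_\Pi, \mathbb P_0) + 1 = \mathbb E_{M,M'}\prod_{ij}(\text{correlation factor})$ where $M, M'$ are two independent draws from $\Pi$. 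Because the entries are independent under the Bernoulli model, this factorizes over $(i,j)$; each factor is $1 + (\text{something of order } p^2 \langle M, M'\rangle\text{-type} + p^2 \rho^2 \text{-type terms})$ once the variance is plugged in, and the design of $\sigma^2$ kills the dominant $O(p)$ term. Summing the logarithms, the exponent is of order $p^2 m^2 \mathbb E[\langle M, M'\rangle^2/m^4] + p^2 m^2 \rho^4 \asymp (n^2/m^2)(\rho^4/k) + n^2\rho^4/m^2$, wait — more carefully it is of order $n^2 \rho^4 /(k m^2)$ after the random-matrix second moment $\mathbb E\langle M, M'\rangle^2 \asymp m^2\rho^4 \cdot (\text{rank factor})$ is computed, and this is $o(1)$ exactly when $\rho^2 = o(\sqrt k\, m/n)$.

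The main obstacle I anticipate is twofold. First, one must be careful that the per-entry likelihood ratio is genuinely well-behaved: $\epsilon_{ij}$ is only assumed bounded, not Gaussian, so I cannot write down an explicit density ratio and must instead argue at the level of moments — this is why the test statistic in the known-variance case is a quadratic form, and why here only the second-moment match plus a crude quartic bound (using $|\epsilon_{ij}| \le U$, whence the hypothesis $U \ge 2$, and the reduced variance $1-4\rho^2 > 0$) is available; making the $\chi^2$ or TV bound rigorous without densities requires either conditioning on $\epsilon$ and bounding a Gaussian-type surrogate, or a direct Hellinger/TV estimate per coordinate that is robust to the noise distribution. Second, one has to handle the composite nature of $H_0$ (rank $\le k_0$, not just $M=0$): since $k_0 = o(k)$, the separation $\|M - \mathcal A(a,k_0)\|_F^2 \ge m^2\rho^2$ is automatically satisfied by the random rank-$k$ prior (projecting off any rank-$k_0$ piece removes at most a $o(1)$ fraction of the Frobenius energy, by a singular-value counting argument), so the composite alternative reduces to the simple-vs-mixture comparison above; this step is routine but must be stated. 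Once $\chi^2(\mathbb P_\Pi, \mathbb P_{0,1}) \to 0$ is established, $\liminf$ of the sum of errors $\ge 1$ follows, and the final "In particular" sentence is immediate: choosing $\Sigma \supset [\sqrt{1-4\tau},1]$ ensures $\sqrt{1-4\rho^2} \in \Sigma$ for all large $n,m$, so both hypotheses in \eqref{test1} lie in $\mathcal A(a,k_0)\otimes\Sigma$ resp.\ $\mathcal A(a,k)\otimes\Sigma$, giving \eqref{testbeta0} with $\beta_0 = 1$ and $\rho^* = \rho$.
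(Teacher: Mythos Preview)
Your high-level strategy matches the paper's: construct a random rank-$k$ prior on the alternative, verify it is separated from $\mathcal A(a,k_0)$ via a singular-value argument (the paper does exactly this, citing Corollary~1 of \cite{NicklVandegeer}), and bound the $\chi^2$-divergence between the null distribution and the mixture via the Ingster expansion $\chi^2+1=\mathbb E_{M,M'}\prod_{i,j}(\cdot)$. Your final scaling heuristic $n^2\rho^4/(km^2)=o(1)\Leftrightarrow \rho^2=o(\sqrt{k}m/n)$ also lands on the right answer.

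There are, however, two genuine gaps in the proposal that would prevent it from being completed as written.

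\textbf{(1) The noise distribution is not an obstacle --- it is a choice.} You treat ``$\epsilon_{ij}$ only bounded, not Gaussian'' as a difficulty to be overcome by robust moment estimates. But Assumption~\ref{noise_boundedTR} specifies a \emph{class} of noise distributions, and for a lower bound you pick the least favourable one. The paper takes $\epsilon_{ij}$ Rademacher under $H_0'$ and, under $H_1'$, the two-point law $\delta_{\{1-M_{ij}\}}(1+M_{ij})/2+\delta_{\{-1-M_{ij}\}}(1-M_{ij})/2$. This forces $M_{ij}+\epsilon_{ij}\in\{-1,+1\}$, so $Y_{ij}$ is supported on $\{-1,0,+1\}$ under \emph{both} hypotheses, and the likelihood ratio is explicit: for fixed $M$, it equals $\mathbf 1_{\{Y=0\}}+(1+M_{ij})\mathbf 1_{\{Y=1\}}+(1-M_{ij})\mathbf 1_{\{Y=-1\}}$. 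The variance $1-4\rho^2$ is not engineered separately to ``kill an $O(p)$ term''; it is the variance this two-point law happens to have (since $|M_{ij}|=2\rho$).

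\textbf{(2) The per-entry factor is $1+pM_{ij}M'_{ij}$, not $1+O(p^2)$.} Squaring the explicit likelihood ratio and integrating under $\nu_0$ gives exactly
\[
\chi^2(\nu_0,\nu_1)+1=\mathbb E_{M,M'\sim\pi}\prod_{i,j}\bigl[1+pM_{ij}M'_{ij}\bigr],
\]
so the leading term is $O(p)$ with a random sign, not $O(p^2)$. Your moment-matching intuition does not cancel it; what controls the product is that $\langle M,M'\rangle$ is small for \emph{independent} draws $M,M'$. Bounding this exponential moment is the real technical work. The paper's prior has a specific block structure --- a uniformly random partition of the $m$ columns into $k$ groups of size $m/k$, each group carrying a single Rademacher column vector --- and the bound on $\mathbb E_{M,M'}\prod[1+pM_{ij}M'_{ij}]$ proceeds by conditioning on the partition overlap counts $N_{r,r'}$, reducing to a Binomial/Rademacher-chaos computation (the paper's Lemmas~\ref{lem:boundIn} and~\ref{lem:expmN}). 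This is where the hypothesis $k\le m^{1/3}$ is actually used, and it is more delicate than the standard $1+x\le e^x$ trick, which the paper notes only works for $k$ constant. Your generic prior $\rho\sum_\ell \xi_\ell u_\ell v_\ell^T$ is too vague to carry this through.
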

Using Theorem \ref{Thm_NonExistence} this implies the non-existence of honest and adaptive confidence sets in the model (\ref{Bernoulli_model}) if the variance of the errors is unknown and $k_0 = o(\sqrt{k})$. In particular adaptation to a constant rank $k_0$, $k_0=O(1)$, is never possible if $k \to \infty$ as $\min(m,n) \to \infty$.
\begin{Corollary}
	Assume that the conditions of Theorem \ref{thm:main} are fulfilled and that $k_0=o(\sqrt{k})$. Then % there exists an absolute constant $\beta_0\in(1/2,1]$ such that 
	for any $\alpha, \alpha' > 0$  satisfying $0 < 2\alpha + \alpha' < 1$ a honest confidence set for the model $\mathcal A(a,k) \otimes \Sigma$ at level $\alpha$ that adapts to the sub-model  $\mathcal A(a,k_0) \otimes \Sigma$ at level $\alpha'$ does not exist. In fact if $\alpha < 1/3$, we have for every honest confidence set $C_n$ for the model $\mathcal{A}(a,k) \otimes \Sigma$ at level $\alpha$ and constant $c=c(a, U, \alpha)$ that
	$$ \sup_{(M_0, \sigma) \in \mathcal A(a,k_0) \otimes \Sigma}\mathbb{E}_{M_0, \sigma}  |C_n|_F^2\geq c \frac{m^3\sqrt{k}}{n}.$$
\end{Corollary}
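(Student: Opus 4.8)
\textit{Proof plan for the Corollary.} The plan is to deduce both assertions from Theorem~\ref{Thm_NonExistence}, applied with the adaptation hypothesis $\mathcal A_0 = \mathcal A(a,k_0)\otimes\Sigma$ inside $\mathcal A = \mathcal A(a,k)\otimes\Sigma$, with $\mathcal D$ the normalised Frobenius distance $\mathcal D(A,B)=\|A-B\|_F/\sqrt{m_1m_2}=\|A-B\|_F/m$ on the matrix component (so $|C_n|_{\mathcal D}=|C_n|_F/m$ and, by \eqref{mrisk}, $r_n(\mathcal A(a,k))^2\asymp km/n$), and data $(Y_{ij},B_{ij})$ from \eqref{Bernoulli_model}; with this normalisation the separation $\|M-\mathcal A(a,k_0)\|_F^2\ge m^2\rho^2$ in \eqref{test1} reads $\mathcal D(M,\mathcal A(a,k_0))\ge\rho$. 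Theorem~\ref{Thm_NonExistence} requires two inputs, which I would verify in turn: a sequence $\rho_n$ for which the testing lower bound \eqref{testbeta0} holds with some $\beta_0>0$, and the scale separation $r_n(\mathcal A_0)=o(\rho_n)$.

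For the first input I would invoke Theorem~\ref{thm:main} directly. Under its hypotheses --- assumed here, including that $\Sigma$ contains $[\sqrt{1-4\tau},1]$ with $\tau=\limsup_{n,m}k^{1/4}\sqrt{m/n}$ --- its ``in particular'' clause already asserts \eqref{testbeta0} with $\beta_0=1$ for the spaces $\mathcal A_0,\mathcal A$ above and for every admissible sequence $\rho=\rho_{n,m}$ with $\rho^2=o(\sqrt k\, m/n)$. I would therefore take $\rho_n^2:=\sqrt k\, m/n$ in the metric $\mathcal D$: any $\rho^*_n=o(\rho_n)$ has $(\rho^*_n)^2=o(\sqrt k\,m/n)$, so Theorem~\ref{thm:main} makes the testing risk tend to $1$, i.e. \eqref{testbeta0} holds for this $\rho_n$ with $\beta_0=1$. (Only the lower-bound half of ``minimax rate of testing'' enters the conclusions of Theorem~\ref{Thm_NonExistence}, per the remarks after that theorem, so no matching upper bound is needed.) In unnormalised terms $m\rho_n=\sqrt{m^3\sqrt k/n}$.

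For the second input I would use \eqref{mrisk}: the minimax Frobenius estimation rate over $\mathcal A(a,k_0)$, hence over $\mathcal A_0=\mathcal A(a,k_0)\otimes\Sigma$ --- the thresholding estimators of \cite{KloppThresh, Koltchinskiietal} run with $\sigma$ replaced by the known bound $U$ attain it for all $\sigma\le U$, and the lower bound already holds at a fixed $\sigma$ --- satisfies, up to logarithmic factors, $r_n(\mathcal A_0)^2\asymp k_0 m/n$ in the metric $\mathcal D$. Since $k_0=o(\sqrt k)$ by hypothesis, $r_n(\mathcal A_0)^2/\rho_n^2\asymp k_0/\sqrt k\to0$, so $r_n(\mathcal A_0)=o(\rho_n)$ (and $r_n(\mathcal A_0)=o(r_n(\mathcal A))$ since $k_0=o(k)$), as needed.

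With both inputs in place Theorem~\ref{Thm_NonExistence} applies with $\beta_0=1$, which gives the first assertion verbatim for $0<2\alpha+\alpha'<1$. For the quantitative bound, when $\alpha<1/3$ we have $3\alpha<\beta_0=1$, so \eqref{lbde} yields $\sup_{(M_0,\sigma)\in\mathcal A_0}\mathbb E_{M_0,\sigma}|C_n|_{\mathcal D}\ge c\rho_n$ for any honest $C_n$; I would then multiply by $m$ and apply Jensen's inequality to each $\mathbb E_{M_0,\sigma}$ before taking the supremum to obtain
$$\sup_{(M_0,\sigma)\in\mathcal A_0}\mathbb E_{M_0,\sigma}|C_n|_F^2\ \ge\ \Big(\sup_{(M_0,\sigma)\in\mathcal A_0}\mathbb E_{M_0,\sigma}|C_n|_F\Big)^2\ \ge\ (cm\rho_n)^2\ =\ c^2\,\frac{m^3\sqrt k}{n},$$
and rename $c^2$ as $c$. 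I expect the only genuinely fiddly part to be the bookkeeping of scales: checking that the composite hypotheses in \eqref{testbeta0} are non-empty on the $\rho_n$-scale --- i.e. that $1\in\Sigma$ and $\sqrt{1-4\rho^2}\in\Sigma$ for the relevant small $\rho$ (ensured by the assumed shape of $\Sigma$ since $\rho\to0$, which is the only role played by $\tau$) --- and that the powers of $m,k,k_0$ line up across the normalised and unnormalised Frobenius metrics and against $r_n(\mathcal A_0)$; everything else is a direct quotation of Theorems~\ref{thm:main} and~\ref{Thm_NonExistence}.
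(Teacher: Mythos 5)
Your proposal is correct and follows exactly the route the paper intends: the corollary is obtained by feeding the testing lower bound of Theorem \ref{thm:main} (with $\rho_n^2=\sqrt{k}\,m/n$ in the normalised Frobenius metric, so $m\rho_n=\sqrt{m^3\sqrt k/n}$) into Theorem \ref{Thm_NonExistence}, checking $r_n(\mathcal A_0)^2\asymp k_0m/n=o(\rho_n^2)$ via $k_0=o(\sqrt k)$, and converting \eqref{lbde} into the squared-diameter bound by Jensen. The scale bookkeeping and the use of only the lower-bound half of the testing-rate definition are handled correctly.
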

%The conclusion of the corollary remains true with $\{0 <\sigma \leq 1\}$ %everywhere replaced by any subset $\{\sigma_0 \le \sigma  \le 1\}, %\sigma_0>0,$ as long as $k$ is such that $1-\liminf_{m,n} (\sqrt k m/n)  %\ge \sigma_0$ is satisfied.

The above results are formulated for square matrices ($m_1=m_2$) to keep the technicalities in the proof at a reasonable level. One can adapt the proof of Theorem \ref{thm:main} to obtain a lower bound of the order $\rho^2 \gtrsim \sqrt{k m_1 m_2}/n$ which likewise leads to non-existence results for adaptive confidence sets for non-square matrices in relevant asymptotic regimes of $k_0, k, m_1,m_2$.

\section{Conclusions} 
We have investigated confidence sets in two matrix completion models: the Bernoulli model and the trace regression model. In the trace regression model the construction of adaptive confidence sets is possible, even if the variance is unknown. Contrary to this we have shown that the information theoretic structure in the Bernoulli model is different; in this case the construction of adaptive confidence sets is not possible if the variance is  unknown. 

One interpretation is that in practical applications (e.g. recommender systems such as Netflix \cite{BennetLanning}) one should incentivise users to perform multiple ratings, to justify the use of the trace regression model and the proposed U-statistic confidence set.

Our proof only shows that one can not adapt to general low rank hypotheses if the variance is unknown. This covers the key cases where $k_0=1$ or more generally $k_0=o(\sqrt k)$. It remains an interesting open (and difficult) question whether the lower bound $\rho$ in Theorem \ref{thm:main} is tight, but the answer to this question does not affect the main conclusions of our results on the existence of adaptive confidence sets in matrix completion problems.

\section{Proofs \label{sec:Proof1}}

\subsection{Proof of Theorem~\ref{Thm_Existence}}
\begin{proof}
	Let $\Psi_n$ be a test that attains the rate $\rho$ with error probabilities bounded by $\min (\alpha/2, \alpha')$ and let $L=L(\min (\alpha/2, \alpha'))$ be the corresponding constant in \eqref{TestingError}. Let $\hat f$ denote an estimator that satisfies the oracle inequality \eqref{Oracleinequality} with probability of at least $ 1-\alpha/2$. Define a confidence set
	$$C_n:= \{ f \in \mathcal{A}:~ \mathcal{D}(\hat f, f) \leq K\left ( r_n(\mathcal{A}) \Psi_n + r_n(\mathcal{A}_0)(1-\Psi_n)\right )  \}$$
	where $K > 0$ is a constant to be chosen.\\
	We first prove that $C_n$ is adaptive: If $f \in \mathcal{A} \backslash \mathcal{A}_0$ there is nothing to prove, and if $f \in \mathcal{A}_0$ we have
	$$\mathbb{P}_{f}^n \left ( |C_n|_{\mathcal{D}} > K r_n ( \mathcal{A}_0) \right ) = \mathbb{P}_{f}^n (\Psi_n=1) \leq \alpha'.$$
	For coverage we investigate three distinct cases and note that
	\begin{equation} \label{AdaptiveEstimator} \sup_{f \in \tilde{\mathcal{A}}} \mathbb{P}_{f}^n \left (  \mathcal{D}(\hat f,f ) > C r_n (\tilde{\mathcal{A}})\right ) \leq \alpha/2 \end{equation} where $C > 0$ is as in $\eqref{Oracleinequality}$ and where $\tilde{\mathcal{A}} \in \{ \mathcal{A}_0, \mathcal{A} \}$. Hence $\hat f$ is, by the oracle inequality, an adaptive estimator. \\Then for $f \in \mathcal{A}_0$, by \eqref{AdaptiveEstimator}
	$$ \mathbb{P}_{f}^n ( f \notin C_n ) \leq \mathbb{P}_{f}^n \left ( \mathcal{D}(\hat f, f) > Kr_n(\mathcal{A}_0) \right ) \leq \alpha/2 \leq \alpha$$ for $K \geq C$. \\If $f \in \mathcal{A} \backslash \mathcal{A}_0$ and $\mathcal{D}(f, \mathcal{A}_0) \geq L\rho_n$, then for $K \geq C$
	\begin{align*} \mathbb{P}_{f}^n ( f \notin C_n) & = \mathbb{P}_{f}^n (\mathcal{D}(\hat f, f) > K r_n(\mathcal{A}), \Psi_n=1) +  \mathbb{P}_{f}^n (\mathcal{D}(\hat f, f) > K r_n(\mathcal{A}), \Psi_n=0)  \\
	& \leq  \mathbb{P}_{f}^n (\mathcal{D}(\hat f, f) > K r_n(\mathcal{A} )) + \mathbb{P}_{f}^n (\Psi_n=0) \leq \alpha. \end{align*}
	If $f \notin \mathcal{A} \backslash \mathcal{A}_0$ but $\mathcal{D}(f, \mathcal{A}_0) < L\rho_n$, then by the oracle inequality and since $\rho_n \leq C' r_n(\mathcal{A}_0)$  we have with probability at least $1-\alpha/2$ for such $f$ that
	$$\mathcal{D}(\hat f, f) \leq C (\mathcal{D}(f, \mathcal{A}_0)+r_n(\mathcal{A}_0)) \leq CL\rho_n + C r_n(\mathcal{A}_0) \leq C(LC'+1)r_n(\mathcal{A}_0).$$
	Thus we still have
	$$ \mathbb{P}^n_{f} \left ( f \notin C_n \right ) = \mathbb{P}_{f}^n (\mathcal{D}(\hat f, f) > K r_n(\mathcal{A}_0)) \leq \alpha/2 \leq \alpha$$ for $K \geq C(LC'+1)$.
\end{proof}

\subsection{Proof of Theorem~\ref{traceustatistic}}
%\olga{We shoul put the proofs of all auxilary results in the appendix}\\

\begin{proof}%[Proof of \textbf{Theorem \ref{traceustatistic}}]
	%We use the following Lemma:
	%%\begin{lemma} \label{Uconfidencehilfslemma} The confidence set $C_n$ defined in \eqref{Ustatci} has coverage probability of at least $1-\alpha$.
	%%\end{lemma}
	%%\begin{proof}[Proof of \textbf{lemma \ref{Uconfidencehilfslemma}}]
	%\begin{lemma}\label{lem:inde}
	%Let $n \geq k>0$. The samples $(\tilde X_k, Z_k, Z'_k)_{k \leq N}$ are i.i.d.~knowing $N=l>0$, and such that
	%$$\mathbb P(\tilde X_k = (i,j) | N=k) = \frac{1}{m_1m_2},\quad  \text{and}\quad  Z_k = \langle \tilde X_k, M_0\rangle + \tilde \epsilon_k,  \quad \text{and}\quad Z_k' = \langle \tilde X_k, M_0\rangle + \tilde \epsilon_k',$$
	%where $\tilde \epsilon_k, \tilde \epsilon_k'$ are independent and independent of $\tilde X_k$ conditional to $N=k$ and satisfy Assumption~\ref{}.
	%\end{lemma}
	%\begin{proof}[Proof of \textbf{Lemma~\ref{lem:inde}}]
	%By definition of the data $(\tilde X_k, Z_k, Z'_k)_{k \leq N}$, $N$ is independent of $\tilde X_k$ for $k \leq N$ (as $N$ counts the number of pairs but doe not take into account their position), and also of the noise $\tilde \epsilon_k, \tilde \epsilon_k'$ (as $N$ depends only on $(X_i)_{i >n/2}$ and not on $(Y_i)_{i >n/2}$). This concludes the proof.
	%\end{proof}
	{Recall that} 
	\begin{equation}\label{eq:esp}
	\mathbb E_{M_0, \sigma}\Big(\hat{R}_N|N, N >0\Big)=\frac{\|\hat M - M_0\|_F^2}{m_1m_2} =:r.
	\end{equation}
	Thus using Markov's inequality we have for $N > 0$ that
	\begin{align} \label{Varianceestimate}
	\mathbb{P}_{M_0, \sigma} \left (  M_0 \notin C_n | N, N > 0\right )& \leq \mathbb{P}_{M_0, \sigma} \left ( |\hat{R}_N - r| > z_{\alpha,N} | N, N>0 \right ) \notag\\
	& \leq \frac{\mathbb \Var_{M_0, \sigma}\Big(\hat{R}_N \big| N, N >0 \Big)}{z_{\alpha,N}^2}.
	\end{align}
	Using equation~\eqref{eq:esp} we compute
	\begin{align*}
	\mathbb \Var_{M_0, \sigma}\Big(\hat{R}_N \big| N, N>0 \Big) 
	&=\frac{1}{N}  \mathbb E_{M_0, \sigma}\Big(\big((Z_k- \langle \hat{M}, \tilde X_k \rangle)(Z_k'- \langle \hat{M}, \tilde X_i \rangle)-r\big)^2  \Big| N, N>0  \Big) \\
	& \leq \frac{1}{N} \left [ \left (\mathbb E \langle M_0-\hat{M},  X_1 \rangle^4 \right ) + 2\sigma^2 r + \sigma^4 \right ] \\
	& = \frac{1}{N} \left [\frac{\| \hat M - M_0 \|_{L^4}^4}{m_1 m_2}  + 2 \sigma^2 r +\sigma^4 \right ] \\
	& \leq \frac{U^4+8 U^2 a^2+16 a^4}{N} = \alpha z_{\alpha, N}^2
	\end{align*}
	since $\| \hat M - M_0 \|_\infty \leq 2a$ and where we define $\|\hat M - M_0 \|_{L^4}^4 := \sum_{i,j} (\hat M_{ij} - {M}_{ij})^4$. Hence ~\eqref{Varianceestimate} implies % $N \geq \frac{n^2}{32m_1m_2}$, we can apply Bernstein's inequalu
	\begin{align*}% \label{Varianceestimate}
	\mathbb{P}_{M_0, \sigma} \left (  M_0 \notin C_n | N> 0\right )  \leq \alpha.
	\end{align*}
	Moreover, as $\| \hat M - M_0 \|_\infty \leq 2a$ and  $z_{\alpha, 0} = 4a^2$, we have that $\mathbb{P} \left (  M_0 \notin C_n | N=0\right ) =0$.

	%This, by definition of  $z_{\alpha, N}$, proves the claim.
	%\end{proof}  	

	%Using Lemma \ref{Uconfidencehilfslemma} yields that for every $N \geq 1$ the confidence set has conditionally on $N$ coverage of at least $1-\alpha$ conditional on $N>0$. Furthermore for $N=0$, by convention, we have $z_{\alpha,N} = 4a^2 m_1m_2$ and hence $C_n = [-a,a]^{m_1 m_2}$ \alex{This is false but one has $[-a,a]^{m_1 m_2}\subset C_n$ - also we should say that this holds because the estimator is bounded in infty norm by a} yielding immediately coverage with probability 1 conditional on $N=0$. The result then follows by the law of total probability.
\end{proof}

\subsection{Proof of Theorem~\ref{Thm_CompositeTesting}}
\begin{proof}
	If $M \in \mathcal{A}(a,k_0)$, then by definition of the infimum and $u_\alpha$ we have
	\begin{align*} \mathbb{E}_{M, \sigma} [\Psi] & = \mathbb{P}_{M, \sigma} \left ( T_n > u_\alpha \right )  \leq  \mathbb{P}_{ \sigma} \left ( \frac{1}{\sqrt{2n}} \left | \sum_{ij}B_{ij}(\epsilon_{ij}^2-\sigma^2) \right | > u_\alpha \right ) \leq \alpha/3.
	\end{align*}
	The case $M \in \mathcal{A}(a,k), \| M - \mathcal{A}(a,k_0) \|_F^2 \geq \rho^2$ requires more elaborate arguments. Let $A^*$ be a minimizer in \eqref{Infimumteststatistic}. Then
	\begin{align}
	\mathbb{E}_{M, \sigma}[1-\Psi] & = \mathbb{P}_{M, \sigma} \left ( T_n < u_\alpha \right ) \notag\\
	& = \mathbb{P}_{ \sigma} \left ( \left | \sum_{ij} B_{ij}[(A^*_{ij}-{M}_{ij})^2-2 \epsilon_{ij}(A_{ij}^*-{M}_{ij})+(\epsilon_{ij}^2-\sigma^2)]  \right | < \sqrt{2n} u_\alpha \right ). \label{zerlegung}
	\end{align}
	For $\rho \geq 8072a \sqrt{k_0d/p} = 8072a \sqrt{m_1 m_2 k_0d/n} $ we can apply Lemma \ref{lem_bernoulli_rip} which yields a weaker version of the Restricted Isometry Property (RIP).  Namely, Lemma \ref{lem_bernoulli_rip} implies that the event $$\Xi:= \left \{ \sum_{i,j} B_{ij}({A}_{ij}-M_{ij})^2 \geq \frac{p}{2} \|A - M \|_F^2 ~~~\forall A \in \mathcal{A}(a,k_0) \right \},~~~ M \in H_1,$$ occurs with probability of at least $1-2 \exp(-100d)$. We can thus bound \eqref{zerlegung} by
	\begin{align}
	& \mathbb{P}_{ \sigma} \left ( \sup_{A \in \mathcal{A}(a,k_0)} \left[2 \left | \sum_{i,j} B_{ij} \epsilon_{ij}(A_{ij}-{M}_{ij})  \right | - \frac{\sum_{i,j}B_{ij}({A}_{ij}-M_{ij})^2}{2}\right] > -\sqrt{n} u_\alpha , \Xi\right )\label{InfTest_CrossTerm} \\ + & \mathbb{P}_{ \sigma} \left ( \left | \sum_{i,j} B_{ij}(\epsilon_{ij}^2-\sigma^2) \right | > \frac{\sum_{i,j}B_{ij}(A^*_{ij}-M_{ij})^2}{2} - \sqrt{n} u_\alpha, \Xi \right ) + 2 \exp(-100d).  \label{InfTest_StochTerm}
	\end{align}
	The stochastic term \eqref{InfTest_StochTerm} can be bounded using $d^2 \geq 3n$ and that $\rho$ is large enough. Indeed, on the event $\Xi$  we have that $$
	\frac{\sum_{i,j}B_{ij}(A_{ij}^*-M_{ij})^2}{2} \geq p \rho^2/4 \geq (1+\sqrt{2})/\sqrt{3} d u_\alpha \geq (1+\sqrt{2}) \sqrt{n} u_\alpha$$ for $\rho \geq 2 \sqrt{u_\alpha d /p}$  which implies together with the definition of $u_\alpha$ in \eqref{Quantileconstant3} that $\eqref{InfTest_StochTerm}$ can be bounded by $\alpha/3 + 2 \exp(-100d)$.
	For the cross term \eqref{InfTest_CrossTerm} we use the two following inequalities which, just as before, hold on the event $\Xi$ $\forall~A \in \mathcal{A}(a,k_0)$ $$\frac{\sum_{i,j}B_{ij}({A}_{ij}-M_{ij})^2}{4} \geq \sqrt{n} u_\alpha ~~\text{and} ~~\frac{\sum_{i,j}B_{ij}({A}_{ij}-M_{ij})^2}{8} \geq \frac{p \|A-M  \|_F^2}{16}.$$ Hence, using also 
	a peeling argument, \eqref{InfTest_CrossTerm} can be bounded by
	\begin{align}
	& \sum_{s \in \mathbb{N}:~ p \rho^2/2 \leq 2^s < \infty} \mathbb{P}_{ \sigma} \left ( \sup_{A \in \mathcal{A}(a,k_0), ~2^{s} \leq p \| A-M \|_F^2 \leq 2^{s+1}} \frac{ \left | \sum_{i,j}B_{ij}\epsilon_{ij}(A_{ij}-{M}_{ij}) \right |}{p\| A-M \|_F^2 }  > \frac{1}{16} \right ) \notag \\
	\leq & \sum_{s \in \mathbb{N}:~ p \rho^2/2 \leq 2^s < \infty} \mathbb{P}_{ \sigma} \left ( \sup_{A \in \mathcal{A}(a,k_0),~ p \| A-M \|_F^2 \leq 2^{s+1}}  \left | \sum_{i,j} B_{ij}\epsilon_{ij}(A_{ij}-{M}_{ij}) \right | > \frac{2^s}{16} \right ) \notag \\
	= & \sum_{s \in \mathbb{N}:~ p \rho^2/2 \leq 2^s < \infty} \mathbb{P}_{ \sigma} \left ( Z(s) > \frac{2^s}{16} \right )
	\label{Peeling3}
	\end{align}
	where we set the corresponding probability to $0$ if the supremum is taken over an empty set and where we define $$Z(s):=\sup_{A \in \mathcal{A}(a,k_0), ~p \| A-M \|_F^2 \leq 2^{s+1}}  \left | \sum_{i,j} B_{ij}\epsilon_{ij}(A_{ij}-{M}_{ij}) \right |.$$
	% Moreover we estimate \begin{align} \label{Weh} Z(s) & \leq %\sup_{\substack{A \in \mathcal{A}(a,k_0), ~p \| M_0 - A \|_F^2 \leq %2^{s+1} ~ \text{or}~ A=M_0 \\
	%		A' \in 
	%		\mathcal{A}(a,k_0), ~p \| M_0 - A \|_F^2 \leq 2^{s+1}~ \text{or}~ %A'=M_0}
	%}  \sum_{i,j} B_{ij}\epsilon_{ij}\left %[(A_{ij}-{M}_{ij})^2-(A_{ij}'-{M}_{ij})^2 \right ]
	%:=W(s)
	%\end{align}
	%and observe that \begin{equation} \label{ExpW}\mathbb{E}_{M_0, \sigma}  %W(s) \leq  2\mathbb{E}_{M_0, \sigma}Z(s). \end{equation}
	Lemma \ref{Lemma:Talagrand} (with choices $z=16^2$,  $\xi_{ij}=\epsilon_{ij}$, $t=2^s$ and $q=1$ there ) implies for $\rho \geq 16144U\sqrt{k_0d/p}$ and for $2^s \geq p\rho^2 /2$  that
	$$ \mathbb{P}_\sigma \left ( Z(s) > \frac{2^s}{16} \right ) \leq \exp \left ( \frac{-2^s}{2097152U^2+517120aU }\right )$$
	Hence, \eqref{Peeling3} can be upper bounded by
	\begin{align}
	\sum_{s \in \mathbb{N}:~p \rho^2/2 \leq 2^s < \infty}  %\exp \left ( \frac{-2^{s}}{32^2 (a^2U+32a^2\sigma^2+9a^2U/16)} \right ) \notag  
	\exp \left ( \frac{-2^s}{2097152U^2+517120aU }\right )
	\leq  & 2 \exp \left ( -\frac{p \rho^2}{2097152U^2+517120aU } \right ) \\  \leq & 2 \exp (- 100d)  \notag
	\end{align}
	for $\rho \geq  16169U(a \vee U) \sqrt{d/p}$. Consequently \eqref{zerlegung} can be bounded by $\alpha/3 + 4 \exp(-100d) \leq 2\alpha/3$ since $\alpha \geq 12 \exp(-100d)$.
\end{proof}
\subsection{Proof of Theorem~\ref{thm:main}}

\begin{proof}%[Proof of Theorem~\ref{thm:main}] ~~ \\
	\textbf{Step I : Reduction to an easier testing problem between two distributions} \\ 
	Assume without loss of generality 
	that $m$ is divisible by $k$. Suppose \begin{equation} \label{rho_n} \rho=\rho_{n,m}=\frac{vk^{1/4}\sqrt{m}}{\sqrt{n}} \end{equation} where $v=v_{n,m}$ is a sequence such that $v=o(1)$, and assume w.l.o.g.~that $0<v\le 1$. Moreover we denote $u =2\rho$. %Consider the following random variables.
	For %all
	$1 \leq i \leq m,~ 1 \leq \kappa \leq k, ~ 1 \leq j \leq m$ let
	\begin{equation*}
	B_{ij} \overset{i.i.d.}{\thicksim} \mathcal B(p)\quad \text{and}\quad U_i^{\kappa}\overset{i.i.d.}{\thicksim} \mathcal R\quad \text{and}\quad V_j \overset{i.i.d.}{\thicksim} \mathcal R,
	\end{equation*}
	where $\mathcal B(p)$ is a Bernoulli distribution of parameter $p = n/m^2$ and $\mathcal R$ is the standard Rademacher distribution $\Pr (V_1 = \pm 1)=1/2$. Let
	$\mathcal P$
	be a uniform random partition of $\{1, \ldots, m\}$ in $k$ groups of size $m/k$, and denote by $K_j$, $K_j \in \{ 1,...,k \}$, the label of  element $j$ of $\mathcal P$.
	%$$\tilde K_j^{\kappa} \sim_{i.i.d.} \mathcal \mathcal B(1/k),$$
	%and set $\bar K_j^{\kappa} = \tilde K_j^{\kappa} $ for any $\kappa$ such that $\sum_j \tilde K_j^{\kappa} \leq d/k$ with $h \geq 2$ while for any $\kappa$ such that $\sum_j \tilde K_j^{\kappa} > d/k$, choose uniformly at random $d/k - \sum_j \tilde K_j^{\kappa}$ coordinates $j$ among those such that $\tilde K_j^{\kappa} = 1$ and set for them $\bar K_j^{\kappa} = 0$. Then select for any $j$ uniformly at random $\tilde \kappa$ among the $\kappa$ such that $\bar K_j^{\kappa}\neq 0$, and set
	%$$K_j = \bar K_j^{\tilde \kappa},$$
	%while if all $\kappa$ are such that $\bar K_j^{\kappa} = 0$ set
	%$$K_j = 0.$$
	Consider the following testing problem:
	\begin{align*}
	H_0' &: M= 0\quad \text{and}\quad \epsilon_{ij} \overset{i.i.d.}{\thicksim} \mathcal R
	\end{align*}
	$$\text{against}$$
	\begin{align} 
	H_1' : M_{ij}=u U^{K_j}_i V_j \label{prior}\\% \quad &U_i^{K} \sim \mathcal R \forall d \geq i\geq 1, \forall k \geq k \geq 1, U_i^0 = 0 ,  V_j \sim \mathcal R \forall d \geq j \geq 1, \\
	%(TBS, ~we~ discard~ &if~ several~ of~ if~ more~ than ~d/K)\\
	%Y_{i,j} = 0 w.p. (1-n/d^2), \quad &and \quad
	%K \sim \mathcal U\Big\{ \{1, \ldots, K\}^d  \Big\}\\
	and \quad \epsilon_{ij} \thicksim \delta_{\{1-M_{ij} \}} (1+M_{ij})/2 &+ \delta_{ \{ -1 - M_{ij} \} } (1-M_{i,j})/2 \notag
	\end{align}
	Note that the variance of $\epsilon_{ij}$ under $H_0$ is $1$ and the variance of the noise under $H_1$ is $$(1-M_{ij})^2(1+M_{ij})/2 + (-1-M_{ij})^2(1-M_{ij})/2 = (1-M_{ij})(1+M_{ij}) = 1-4\rho^2,$$ so the noise variables are homoscedastic across the $(i,j)$'s and $|\epsilon_{ij}| \le 2 \le U$. Let $\pi$ be the distribution of $M$ under $H_1'$ and write $\nu_0$ and $\nu_1$ for the distribution of $Y$ under $H_0'$ and $H_1'$, respectively. \\
	%Note first that by definition of $K$, with probability larger than $1 - k $
	%	We now prove the lower bound for the composite testing problem and %proceed as Carpentier and Nickl in \cite{CarpentierNickl}, Theorem 4. %Observe that $0 \in \mathcal{A}(a,k_0)$ and hence, since we already %obtained the bound for the simple null hypothesis testing problem, we only need to show that the prior $M$ in \eqref{prior}  is $c_0m^2\rho^2$-separated from the composite null-hypothesis $H_0: M \in \mathcal{A}(a,k_0)$ with probability going to $1$.  Here $\rho$ is as in  \eqref{rho_n} and $c_0 > 0$ is a small constant.
	Since the prior $M$ in \eqref{prior} consists of $k$ i.i.d.~scaled Rademacher vectors that each form $m/k$ columns of $M$ we have $\rank(M) \leq k$ and $\| M \|_\infty = u = 2\rho \leq a$ for $v$ small enough and since $k^{1/4}\sqrt{m/n}\leq a/2$. Thus $M \in \mathcal{A}(a,k)$.
	Then, reordering the columns of $M$ we have
	$$ \| M - \mathcal{A}(a,k_0) \|_F^2 = \| M_{ord} - \mathcal{A}(a,k_0)\|_F^2$$
	where $M_{ord}$ is a $m \times m$ matrix with the $(((i-1) m/k) +1) $-th to the  $(im/k)$-th columns each given by $u r_i$ where $r_i$ are $i.i.d$ Rademacher vectors of length $m$, $i=1,...,k$. Then (as in the proof of Theorem 1 in \cite{CarpentierNickl}) we transform $M_{ord}$ into the $m \times k$ matrix $M_{ord}P=u\sqrt{m/k}R$ consisting of $k$ column vectors $u \sqrt{m/k}r_i,~i=1,...,k$. The $m \times k$ projection matrix $P$ consists of $k$ column vectors, the $i$-th having zero entries except for the indices $s \in [((i-1) m/k) +1, \dots, im/k]$ where it equals $\sqrt{k/m}$. Hence $P$ is an orthonormal projection matrix and we obtain 
	$$\| M - \mathcal{A}(a,k_0) \|_F^2 \geq \| (M_{ord} - \mathcal{A}(a,k_0))P \|_F^2 = \| u\sqrt{m/k}R-\mathcal{A}(a\sqrt{m/k},k,k_0) \|_F^2$$
	where we define $$\mathcal A(a,k,k_0):= \{ A \in \mathbb{R}^{m \times k}: ~\| A \|_\infty \leq a ~~\text{and}~
	~\text{rank}(A)\leq k_0 \}.$$
	%for $A^*$ being a minimizer of $\| M - \mathcal{A}(a,k_0) \|_F^2$ we have
	%\begin{align*} \|M-A^* \|_F^2 & = \sum_{i=1}^m %\sum_{j=1}^m(uU^{K_j}_iV_j-A^*_{ij})^2 \\
	%& = u^2\sum_{r=1}^{m/k} \sum_{s=1}^m \sum_{l=1}^k (R_{ls}-A_{ls}^*/u)^2 \\
	%& = \frac{u^2m}{k}\sum_{s=1}^m \sum_{l=1}^k (R_{ls}-A_{ls}^*/u)^2 \\
	%& \geq \frac{u^2m}{k} \|R - \mathcal{C}(k,k_0) \|_F^2.
	%\end{align*}
	Therefore, if $\sigma_{min}(A)$ denotes the minimal singular value of a matrix $A$, we have that
	\begin{align}
	\| M - \mathcal A(a,k_0) \|_F^2
	& \geq \frac{m^2}{k} \| uR/\sqrt m - \mathcal A(a/ \sqrt{m}, k,k_0) \|_F^2 \notag \notag  \\
	& \geq   \frac{m^2u^2}{k} (k-k_0) (\sigma_{min}(R/\sqrt{m}))^2  \notag\\
	& \geq \frac{m^2u^2}{2} (\sigma_{min}(R/\sqrt{m}))^2  \geq \frac{m^2u^2}{4} = m^2 \rho^2
	%	& \geq u^2 \frac{m (k-k_0)}{k} \eps^2 (\sqrt{m}-\sqrt{k-1})^2 \label{Rudelsonlowerbound}
	\end{align}
	with probability going to 1, where we have used that $k-k_0 \geq k/2$ for $m$ large enough (recall $k_0=o(k)$) as well as the variational characterisation of minimal eigenvalues combined with Corollary 1 in \cite{NicklVandegeer} (with choices $n=m,~ p=k_1=k, ~\theta=0$ and $\Lambda_{min}=1$ there) to lower bound $\sigma^2_{min}(R/\sqrt m)$ by $1/2$. \\
	%	with probability at least $1-(C \varepsilon)^{m-k+1} -\exp(-cm)$  applying Theorem 1.1 from Rudelson and Vershynin \cite{RudelsonVershynin} to lower  bound the smallest singular value $\sigma_{min}(R)$ of $R$, and where $C > 0$ and $c > 0$ are as in Theorem 1.1 in \cite{RudelsonVershynin}. We then choose $\eps = 1/(2C)$ and since $k_0=o(k)$ and $k = o(m)$ we obtain that $\eqref{Rudelsonlowerbound}$ is lower bounded by $c_0m^2u^2=c_0m^2\rho^2$ with probability approaching 1 for a constant $c_0 > 0$. \\
	%(since for any $\kappa$, $\sum_j \mathbf 1\{K_j = \kappa\} \geq d/(2k)$ with probability $1 - \exp(-d/(20k))$ at least).
	To conclude, $\pi$ is concentrated on $H_1$ and
	the primed testing problem above 
	%conditioned on the high probability event where $\|\theta\|_F \geq d^2u^2/2$ 
	is, asymptotically, strictly easier than the testing problem~\eqref{test1} since $H_0'$ is contained in $H_0$ and $H_1'$ is asymptotically contained in $H_1$. Thus, we have  for any test $\Psi$ by a standard lower bound (as, e.g., in (6.23) in \cite{Ginenickl}) that for all $\eta>0$
	$$\mathbb E_{H_0} \Psi + \sup_{H_1}\mathbb E_{H_1} (1-\Psi) \geq \mathbb E_{H_0'} \Psi + \mathbb E_{H_1'} (1-\Psi) - o(1) \geq (1-\eta)\left( 1- \frac{d_{\chi^2}(\nu_0, \nu_1)}{\eta}\right)-o(1),$$
	where $d_{\chi^2}(\nu_0, \nu_1)$ denotes the $\chi^2$-distance between $\nu_0$ and $\nu_1$, which remains to be bounded. \\ \\
	\textbf{Step II : Expectation over censored data}\\
	We define $I = [m]\times [m]$ and observe that the likelihood of the data under $\nu_0$ is
	$$L(Y_1,...Y_{m,m}) = \prod_{(i,j) \in I} \Big( (1-p)\mathbf 1_{ \{ Y_{ij} = 0\}} + \frac{p}{2} \mathbf 1_{ \{Y_{ij} = 1\}}+\frac{p}{2} \mathbf 1_{ \{Y_{ij} = -1 \}}\Big)$$
	and that the likelihood of the data under $\nu_1$ is
	$$L(Y_1,...Y_{m,m}) = \mathbb E_{M \sim \pi}\prod_{(i,j) \in I} \Big( (1-p)\mathbf 1_{ \{ Y_{ij} = 0 \}} + p(1/2 + M_{ij}/2)\mathbf 1_{ \{Y_{ij} = 1 \}} + p(1/2 - M_{ij}/2)\mathbf 1_{ \{Y_{ij} = -1 \}} \Big) .$$
	Thus, the likelihood ratio $\mathcal{L}$ between these two distributions is given by
	$$\mathcal{L} =  \mathbb E_{M\sim \pi} \prod_{(i,j) \in I} \Big( \mathbf 1_{ \{Y_{ij} = 0\}} + (1 + M_{ij})\mathbf 1_{ \{Y_{ij} = 1 \}} + (1 - M_{ij})\mathbf 1_{ \{Y_{ij} = -1 \}} \Big).$$
	So we have that
	\begin{align}\label{2}
	d_{\chi^2}(\nu_0, \nu_1)^2+1&  = \mathbb E_{Y \sim \nu_0} \mathcal{L}^2\nonumber\\
	& = \mathbb E_{Y\sim \nu_0}\Big[\mathbb E_{M \sim \pi} \prod_{(i,j) \in I} \Big( \mathbf 1_{ \{Y_{ij} = 0 \}} + (1 + M_{ij})\mathbf 1_{ \{Y_{ij} = 1 \}} + (1 - M_{ij})\mathbf 1_{ \{Y_{ij} = -1 \}} \Big)\Big]^2\nonumber\\% \prod_i \Big( (1-p)\mathbf 1_{Y_i = 0} + p \mathbf 1_{Y_i \neq 0}\Big) \\
	%	&= \sum_{Y\in \{0,-1,1\}^{m^2}}\Big[\mathbb E_{M \sim \pi} \prod_{(i,j) %\in I} \Big( \mathbf 1_{Y_{ij} = 0} + (1 + M_{ij})\mathbf 1_{Y_{ij} = 1} + %(1 - M_{ij})\mathbf 1_{Y_{ij} = -1} \Big)\Big]^2 \nonumber\\ & ~~~~~~ %\prod_{(i,j) \in I} \Big( (1-p)\mathbf 1_{Y_{ij} = 0} + \frac{p}{2} %\mathbf 1_{Y_{ij} = 1}+\frac{p}{2} \mathbf 1_{Y_{ij} =-1}\Big) \nonumber\\
	%&= \mathbb E_{M,M' \sim \mu^2} \prod_{(i,j) \in D} \mathbb %\sum_{Y_{ij}\in \{0,-1,1\}} \Big[\Big( \mathbf 1_{Y_{ij} = 0} + (1 %+ M_{ij})\mathbf 1_{Y_{ij} = 1} + (1 - M_{ij})\mathbf 1_{Y_{ij} = %-1} \Big) \\ & \cdot \Big( \mathbf 1_{M_{ij} = 0} + (1 + %M_{ij}')\mathbf 1_{Y_{ij}= 1} + (1 - M_{ij}')\mathbf 1_{Y_{ij} = %-1} \Big)
	%\Big( (1-p)\mathbf 1_{Y_{ij} = 0} + \frac{p}{2} \mathbf 1_{Y_{ij} %\neq 0}\Big)\Big]\\
	%&= \mathbb E_{\theta,\theta'} \prod_i \Big\{ \sum_{Y_i\in \{-1,1,0\}} \Big[\Big((1-p)\mathbf 1_{Y_i = 0} + \frac{p}{2}(1 + \theta_i)  (1 + \theta_i')\mathbf 1_{Y_i = 1} + \frac{p}{2} (1 - \theta_i) (1 - \theta_i')\mathbf 1_{Y_i = -1} \Big)\Big] \Big\}\\
	&= \mathbb E_{M,M' \sim \pi} \prod_{i,j} \Big[\Big(1-p+ \frac{p}{2} (1 + M_{ij}) (1 + M_{ij}') + \frac{p}{2}(1 - M_{ij})  (1 - M_{ij}') \Big)\Big]\nonumber\\
	&= \mathbb E_{M,M'\sim \pi} \prod_{i,j} \Big[1 +p M_{ij}M_{ij}'\Big].
	%&= \mathbb E_{M,M'\sim \pi^2} \prod_{i,j} \Big[1 +p %M_{ij}M_{ij}'\Big].
	%&= \mathbb E_{U,V,U',V'} \prod_{i,j} \Big[1 +p u^2 U_iV_j U_i'V_j'\Big]\\
	%&= \mathbb E_{R,R'} \prod_{i,j} \Big[1 +p u^2 R_iR'_j\Big]\\
	%&\leq \mathbb E_{U,V,U',V'}  \exp\Big(p u^2 \sum_{i,j}U_iV_j U_i'V_j'\Big)\\
	%&= \mathbb E_{R,R'}  \exp\Big(p u^2 \sum_{i,j}R_iR_j'\Big)\\
	%&= \mathbb E_{R'}\Big[  \frac{1}{2}\exp\Big(p u^2 \sum_{j}R_j'\Big) + \frac{1}{2}\exp\Big(-p u^2 \sum_{j}R_j'\Big) \Big]^m\\
	%&\leq \mathbb E_{R'}\Big[  \exp\Big(4p^2 u^4 (\sum_{j}R_j')^2\Big) \Big) \Big]^m \quad since \quad pu^2 \sum R_j' \leq \sum R_j'/d \leq 1\\
	%&\leq \mathbb E_{R'}\Big[  \exp\Big(4dp^2 u^4 (\sum_{j}R_j')^2\Big) \Big) \Big]\\
	%&\leq \mathbb E_{R'}\Big[  \exp\Big(\frac{4v^4}{d} (\sum_{j}R_j')^2\Big) \Big) \Big] \leq 1 + v.
	\end{align}
	where $M'$ is an independent copy of $M$. 
	%where we write $\mathbb E_{\theta}$ for the distribution of $\theta$ under $H_1'$, $\theta'$ for an independent copy of $\theta$ following the distribution on $H_1'$, $\sum_{Y_i}$ for the sum over $Y_i$ in $\{-1, 0, 1\}$, and $\sum_Y$ for the multiple sum over all $Y_i$ in $\{-1, 0, 1\}$.
	
	\smallskip
	
	\textbf{Step III : Conditioning over the cross information} \\% for two %independent realisations $\mathbf N$} \\
	Let $N_{r,r'}$ be the number of times where the couple $K_j=r,K'_j=r'$ occurs. That is,
	\begin{equation*}
	N_{r,r'}:=  \sum_{j=1}^m \mathbf 1_{ \{ K_j=r, K_{j}'=r' \}}.
	\end{equation*}
	We enumerate the elements inside these groups from $1$ to $N_{r,r'}$. We write $\tilde V_j^{r,r'}$ for the corresponding enumeration of the $V_j$. Setting $\mathbf N = (N_{r,r'})_{r,r'}$ and using the definition of the prior, we compute
	\begin{align}\label{1}
	\mathbb E_{M,M' \sim \pi} \prod_{i,j} \Big[1 +p M_{ij}M_{ij}'\Big] &= \mathbb E_{\mathbf N,U,\tilde V,U',\tilde V'} \prod_{i=1}^m \prod_{r,r' \in \{1,\ldots, k\}^2} \prod_{j=1}^{N_{r,r'}} \Big[1 +pu^2 U_i^r \tilde V_j^{r,r'}(U_i^{r'})' (\tilde V_j^{r,r'})'\Big]\nonumber\\
	%&= \mathbb E_{\mathbf N}  \prod_{r,r' \in \{1,\ldots, k\}^2}  \mathbb %E_{U,\tilde V,U',\tilde V'} \prod_{i=1}^m \prod_{j=1}^{N_{r,r'}} %\Big[1 +pu^2 U_i^r \tilde V_j^{r,r'}(U_i^{r'})' (\tilde %V_j^{r,r'})'\Big]\nonumber\\
	&=: \mathbb E_{\mathbf N}  \prod_{r,r' \in \{1,\ldots, k\}^2}  \mathcal I(N_{r,r'})
	\end{align}
	where we define for any $N = N_{r,r'} > 0$ 
	\begin{align*}
	\mathcal I(N) = \mathbb E_{X,W,X',W'} \prod_{i=1}^m \prod_{j=1}^{N} \Big[1 +pu^2 X_i W_j X_i' W_j'\Big]
	\end{align*}
	and where $(X_i)_{i \leq m}, (X'_i)_{i \leq m}, (W_i)_{j \leq N}, (W'_i)_{j \leq N}$ are $i.i.d$.~Rademacher random variables. Moreover, we set $\mathcal I_{r,r'}(0) = 0$. 
	
	\smallskip	
	
	\textbf{Step IV : Bound on $\mathbb E_{\mathbf N}  \prod_{r,r' \in \{1,\ldots, k\}^2}  \mathcal I(N_{r,r'})$.}\\
	In order to  bound $\mathcal I(N)$ we use the following lemma proved below
	\begin{Lemma}\label{lem:boundIn}
		Let $N=N_{r,r'}$. There exist constants $C_1, C_2, C_3 >0$ such that for $v$ small enough 
		\begin{align}\label{3}
		\mathcal I(N) 
		&\leq    \exp\Big (C_1 v^4N/m\Big)  \exp\Big(\frac{C_2 v^4k^2N}{m^2}\Big)\exp\Big(C_3v^4N^2k^2/m^2\Big).
		\end{align}
	\end{Lemma}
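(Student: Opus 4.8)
The plan is to bound $\mathcal I(N)$ by expanding the product over $j \leq N$ and integrating out the Rademacher variables in the right order, exploiting the product structure in the indices $i$ and $j$. First I would write
$$\mathcal I(N) = \mathbb E_{X,X'} \prod_{j=1}^N \mathbb E_{W_j, W_j'} \prod_{i=1}^m \big(1 + p u^2 X_i W_j X_i' W_j'\big),$$
using independence of the $(W_j,W_j')$ across $j$. For fixed $j$, set $S = \sum_{i=1}^m X_i X_i'$ (so $|S|\le m$ and $S$ is a centred sum of $m$ i.i.d.\ signs). Expanding $\prod_i (1 + p u^2 \eta X_i X_i')$ in the single sign $\eta = W_j W_j' \in \{\pm1\}$ and averaging over $\eta$ kills all odd powers, leaving only even powers of $(pu^2)$, and one gets a clean expression: $\mathbb E_{W_j,W_j'} \prod_i(1+pu^2 X_i W_j X_i' W_j') = \tfrac12\prod_i(1+pu^2 X_iX_i') + \tfrac12\prod_i(1-pu^2X_iX_i') = \prod_i\sqrt{(1+pu^2X_iX_i')(1-pu^2X_iX_i')}$... more directly, grouping the $m$ factors by the value $X_iX_i'=\pm1$, if $S_+$ of them are $+1$ and $S_-=m-S_+$ are $-1$, the average over $\eta$ equals $\tfrac12\big[(1+pu^2)^{S_+}(1-pu^2)^{S_-} + (1-pu^2)^{S_+}(1+pu^2)^{S_-}\big]$, which is $\cosh$-like and depends only on $S=S_+-S_-$.

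Next I would take the product over $j$ and the remaining expectation over $X,X'$, i.e.\ over $S$. Since the per-$j$ factor is a function $g(S)$ with $g(S) = \cosh\!\big(S\,\mathrm{arctanh}(pu^2)\big)\cdot\big[(1-p^2u^4)^{m/2}\big]$ after factoring (the algebra: $(1\pm pu^2)^{S_\pm}$ recombine into $(1-p^2u^4)^{(S_++S_-)/2}\cosh/\sinh$ terms), we get $\mathcal I(N) = (1-p^2u^4)^{mN/2}\,\mathbb E_S\big[\cosh^N(S\,\theta)\big]$ with $\theta = \mathrm{arctanh}(pu^2) \le 2pu^2$ for $pu^2$ small. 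Using $pu^2 = p\cdot 4\rho^2 = 4v^2 p k^{1/2} m / n = 4 v^2 k^{1/2}/m$ (since $p = n/m^2$), one checks $\theta^2 \lesssim v^4 k/m^2$. The bound $\cosh^N(S\theta) \le \exp(N S^2\theta^2/2)$ when $|S\theta| \le 1$, combined with the sub-Gaussian tail of $S$ (a sum of $m$ signs, so $\mathbb E e^{\lambda S^2}$ is controlled for $\lambda \lesssim 1/m$) handled via a Laplace-transform / Hubbard–Stratonovich estimate $\mathbb E\exp(\lambda S^2) \le (1-2\lambda m)^{-1/2}$-type bound, will produce $\mathbb E_S \cosh^N(S\theta) \le \exp\big(C m N \theta^2\big)$ for the main regime, i.e.\ a term $\exp(C_1 v^4 N/m)$ after substituting $\theta^2 \asymp v^4 k/m^2$ and absorbing one factor of $k$ — and I would need to be careful to split off the case where $N\theta^2 m$ is not small, which is where the extra $\exp(C_3 v^4 N^2 k^2/m^2)$ term comes from: when $NS^2\theta^2/2$ is large one cannot linearise $\cosh$, and bounding $\cosh^N(S\theta) \le \exp(N|S|\theta) \le \exp(N m \theta)$ and then $\exp(Nm\theta) \asymp \exp(N \cdot v^2 k^{1/2})$, or more carefully optimising the Gaussian integral, yields the quadratic-in-$N$ correction. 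The $\exp(C_2 v^4 k^2 N/m^2)$ term similarly arises from a lower-order correction in expanding $(1-p^2u^4)^{mN/2} \le \exp(mN p^2u^4/2 + \ldots)$ or from boundary terms in the tail split.

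The main obstacle I anticipate is the careful case analysis in controlling $\mathbb E_S[\cosh^N(S\theta)]$: one must split according to whether $S$ is of typical size $O(\sqrt m)$ (giving the linear-in-$N$ term $\exp(C_1 v^4 N/m)$ via linearising $\cosh$) or atypically large, and in the latter regime the bound degrades to something quadratic in $N$, producing $\exp(C_3 v^4 N^2 k^2/m^2)$; getting the constants and the threshold right, and verifying that $v$ small enough makes $pu^2$, $\theta$, and $S\theta$ all small in the dominant regime, is the delicate part. I would organise it as: (i) integrate out $W$ to get the $\cosh$ form; (ii) factor out $(1-p^2u^4)^{mN/2}$ and bound it by $\exp(C_2 v^4 k^2 N/m^2)$ after plugging in $p^2 u^4 \asymp v^4 k / m^4 \cdot m^2 = v^4 k/m^2$... (rechecking: $p^2 u^4 = (n/m^2)^2 (4\rho^2)^2 = 16 n^2 v^4 k m^2 / (m^4 n^2) = 16 v^4 k / m^2$, so $(1-p^2u^4)^{mN/2} \le \exp(8 v^4 k N/ m)$ — this actually feeds the first term, so the $k^2/m^2$ term must come from a second-order term in the expansion, $\binom{mN/2}{2}(p^2u^4)^2 \asymp m^2 N^2 v^8 k^2/m^4$, or from a cross term; I would track this precisely); (iii) bound $\mathbb E_S \cosh^N(S\theta)$ by a Gaussian integral, splitting the tail. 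Once Lemma \ref{lem:boundIn} is in hand, the outer expectation $\mathbb E_{\mathbf N}\prod_{r,r'}\mathcal I(N_{r,r'})$ is controlled because $\sum_{r,r'} N_{r,r'} = m$ and each $N_{r,r'}$ concentrates around $m/k^2$, so $\sum N_{r,r'}^2 \lesssim m^2/k^2$ with high probability, and plugging these into \eqref{3} gives $d_{\chi^2}(\nu_0,\nu_1)^2 \to 0$ precisely under the hypothesis $\rho^2 = o(\sqrt k m/n)$, i.e.\ $v = o(1)$.
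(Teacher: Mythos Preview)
Your reduction to $\mathcal I(N) = (1-p^2u^4)^{mN/2}\,\mathbb E_S\big[\cosh^N(S\theta)\big]$ with $\theta=\arctanh(pu^2)$ is correct and is dual to the paper's route (the paper integrates in the opposite order and arrives at $(1-p^2u^4)^{mN/2}\,\mathbb E_Q\big[\cosh^m((2Q-N)\theta)\big]$ with $Q\sim\mathcal B(1/2,N)$). But your plan to bound the two factors \emph{separately} has a genuine gap. With $\theta\approx pu^2=4v^2k^{1/2}/m$, your Hubbard--Stratonovich estimate gives
\[
\mathbb E_S\exp\Big(\tfrac{N\theta^2}{2}S^2\Big)\le (1-mN\theta^2)^{-1/2}\approx\exp\Big(\tfrac{mN\theta^2}{2}\Big)\approx\exp\big(8v^4kN/m\big),
\]
and the factor $k$ in the exponent cannot simply be ``absorbed'' as you write. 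After taking the product over the $k^2$ pairs $(r,r')$ and using $\sum_{r,r'}N_{r,r'}=m$, this produces $\exp(8v^4k)\to\infty$, which destroys the $\chi^2$ bound.

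The fix---and this is precisely what the paper exploits---is to \emph{keep} the prefactor and let it cancel the leading term: since $(1-p^2u^4)^{mN/2}=\exp\big(-\tfrac{mN\theta^2}{2}+O(mNp^4u^8)\big)$ to leading order, the combination is
\[
\mathcal I(N)\le \exp\big(O(mNp^4u^8)\big)\cdot\mathbb E_S\exp\Big(\tfrac{N\theta^2}{2}(S^2-m)\Big),
\]
and now $S^2-m=\sum_{i\ne j\le m}\epsilon_i\epsilon_j$ is a \emph{centred} order-two Rademacher chaos, to which the Laplace-transform bound $\mathbb E e^{\lambda(S^2-m)}\le\exp(C\lambda^2m^2)$ (valid for $\lambda m\lesssim 1$) applies and yields a contribution of order $\exp(Cv^8k^2N^2/m^2)$---in fact stronger than what the lemma asserts. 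The paper performs exactly this cancellation in the dual variable $Q$: it carries the prefactor $\exp(-mNx^2/2)$ through and combines it with the leading quadratic $\exp\big(\tfrac{m}{2}(2Q-N)^2x^2\big)$ from the Taylor expansion of $\cosh^m$ to form its term $(I)=\mathbb E_Q\exp\big(mx^2[(2Q-N)^2-N]\big)$, again a centred chaos (now in $N$ Rademachers), and bounds it by the same chaos inequality. The three exponential factors in the lemma's statement correspond to this centred-chaos term $(I)$ (giving $N^2k^2/m^2$), a fourth-order remainder term $(II)$ involving $(Q-N/2)^4x^4$ (giving $N/m$), and a first-order remainder term $(III)$ involving $|Q-N/2|x^2$ (giving $k^2N/m^2$); your attempt to identify the three factors with the prefactor and a tail split on $|S\theta|$ is not the right bookkeeping.
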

	Using \eqref{2}, \eqref{1} and \eqref{3} we have that 
	%	\begin{align}
	%	& d_{\chi^2}(\nu_0, \nu_1)^2+1\notag \\
	%	& = \mathbb E_{\mathbf N}  \prod_{r,r' \in \{1,\ldots, k\}^2}  \mathcal I(N_{r,r'})\label{4}\\
	%	%&\leq \mathbb E_{\mathbf N}  \prod_{r,r' \in \{1,\ldots, k\}^2}  \left [ %\exp\Big(2c_2^2 v^4k^2N_{r,r'}/m^2\Big) \exp\big(\frac{c_1 %v^4N_{r,r'}}{m}\big)\exp\Big(2v^4N_{r,r'}^2k^2/m^2\Big)\right ]\notag\\
	%	&\leq \exp(C_2 v^2) \mathbb E_{\mathbf N} \left [ \left (  \exp \left ( \dfrac{C_1v^4}{m}\sum_{r,r'} N_{r,r'}  \right ) \right )  \left (\prod_{r,r' \in \{1,\ldots, k\}^2}  \exp\Big(C_3v^4N_{r,r'}^2k^2/m^2\Big) \right ) \right ] \notag\\
	%	&=  \exp\left (C_2v^2+ C_1 v^4 \right )\mathbb E_{\mathbf N}  \left [\prod_{r,r' \in \{1,\ldots, k\}^2}  \exp\Big(C_3v^4N_{r,r'}^2k^2/m^2\Big)\right ], \label{Stochterm1}
	%	\end{align}
	\begin{align}
	& d_{\chi^2}(\nu_0, \nu_1)^2+1\notag \\
	& = \mathbb E_{\mathbf N}  \prod_{r,r' \in \{1,\ldots, k\}^2}  \mathcal I(N_{r,r'})\label{4}\\
	%&\leq \mathbb E_{\mathbf N}  \prod_{r,r' \in \{1,\ldots, k\}^2}  \left [ %\exp\Big(2c_2^2 v^4k^2N_{r,r'}/m^2\Big) \exp\big(\frac{c_1 %v^4N_{r,r'}}{m}\big)\exp\Big(2v^4N_{r,r'}^2k^2/m^2\Big)\right ]\notag\\
	&\leq  \mathbb E_{\mathbf N} \left [ \left (  \exp\left(\frac{C_2 v^4 k^2}{m^2}\sum_{r,r'} N_{r,r'}\right)\right)\left(\exp \left ( \dfrac{C_1v^4}{m}\sum_{r,r'} N_{r,r'}  \right ) \right )  \left (\prod_{r,r' \in \{1,\ldots, k\}^2}  \exp\Big(C_3v^4N_{r,r'}^2k^2/m^2\Big) \right ) \right ] \notag\\
	&=  \exp\left (C_2v^4\frac{k^2}{m}+ C_1 v^4 \right )\mathbb E_{\mathbf N}  \left [\prod_{r,r' \in \{1,\ldots, k\}^2}  \exp\Big(C_3v^4N_{r,r'}^2k^2/m^2\Big)\right ], \label{Stochterm1}
	\end{align}
	since $\sum_{r,r'} N_{r,r'} = m$. We bound the expectation of the stochastic term in \eqref{Stochterm1} using the following lemma proved below:
	\begin{Lemma}\label{lem:expmN}
		There exists a constant $C'>0$ such that for $v$ small enough we have %holds that
		\begin{align}\label{5}
		&\mathbb E_N \Big[\prod_{r,r'} \exp\Big(C_3v^4N_{r,r'}^2k^2/m^2\Big)\Big] \leq 1 + 2C'v^4 + \exp\Big(-m/k^2\Big).
		\end{align}
	\end{Lemma}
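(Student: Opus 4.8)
The plan is a truncation plus union-bound argument on the cell counts $N_{r,r'}$. Recall that $\sum_{r,r'}N_{r,r'}=m$, that each $N_{r,r'}$ has mean $m/k^2$ (by exchangeability and independence of the two random partitions $\mathcal P,\mathcal P'$), and that, conditionally on $\mathcal P$, the variable $N_{r,r'}$ is hypergeometric, being the overlap of the fixed block $G_r$ of size $m/k$ with the uniformly random block $G'_{r'}$ of size $m/k$ inside $[m]$; in particular $N_{r,r'}\le m/k$ surely, and the assumption $k\le m^{1/3}$ guarantees $m/k^2\to\infty$. Writing $c:=C_3v^4k^2/m^2$, so that the left-hand side of \eqref{5} equals $\mathbb E_{\mathbf N}\exp\!\big(c\sum_{r,r'}N_{r,r'}^2\big)$, I would split this expectation over the event $G:=\{\max_{r,r'}N_{r,r'}\le e^2 m/k^2\}$ and its complement.

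On $G$ one has the deterministic estimate $\sum_{r,r'}N_{r,r'}^2\le(\max_{r,r'}N_{r,r'})\sum_{r,r'}N_{r,r'}\le(e^2m/k^2)\cdot m$, whence $c\sum_{r,r'}N_{r,r'}^2\le C_3e^2v^4$ and $\exp(c\sum N^2)\le 1+2C_3e^2v^4$ for $v$ small enough; thus $\mathbb E_{\mathbf N}[\exp(c\sum N^2)\mathbf 1_G]\le 1+2C'v^4$ with $C':=C_3e^2$. On $G^c$ I would use only the crude bound $N_{r,r'}\le m/k$, which gives $c\sum_{r,r'}N_{r,r'}^2\le c(m/k)\sum_{r,r'}N_{r,r'}=C_3v^4k$, so the integrand is at most $\exp(C_3v^4k)$ there, and I would bound $\mathbb P(G^c)$ by a union bound over the $k^2$ cells together with the multiplicative Chernoff bound for the hypergeometric law, $\mathbb P(N_{r,r'}\ge e^2m/k^2)\le\exp(-(e^2+1)m/k^2)$, giving $\mathbb P(G^c)\le k^2\exp(-(e^2+1)m/k^2)$. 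Hence $\mathbb E_{\mathbf N}[\exp(c\sum N^2)\mathbf 1_{G^c}]\le\exp(C_3v^4k)\,k^2\exp(-(e^2+1)m/k^2)$, and adding the two pieces gives the asserted bound $1+2C'v^4+\exp(-m/k^2)$ once this last quantity is shown to be at most $\exp(-m/k^2)$.

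The remaining — and main — point is exactly this balancing. Since $v=o(1)$ and $k^3\le m$, we have $C_3v^4k=C_3v^4(k^3/m)\cdot(m/k^2)=o(m/k^2)$, and since $m/k^2\ge m^{1/3}\to\infty$ while $\log k\le\frac13\log m$, also $2\log k=o(m/k^2)$; therefore $C_3v^4k+2\log k\le e^2 m/k^2$ for $\min(n,m)$ large enough, which is precisely what is needed for $\exp(C_3v^4k)\,k^2\exp(-(e^2+1)m/k^2)\le\exp(-m/k^2)$. The obstacle is that on atypical configurations — for instance when $\mathcal P$ and $\mathcal P'$ are nearly aligned — the integrand genuinely is as large as $\exp(\Theta(v^4k))$, which is not negligible against the target; the threshold defining $G$ must therefore be a large enough constant multiple of the mean $m/k^2$ that the hypergeometric tail absorbs both the $k^2$ union-bound factor and the worst-case value $\exp(C_3v^4k)$, while still being small enough that the deterministic bound valid on $G$ costs only an $O(v^4)$ term. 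It is precisely the structural assumptions $k\le m^{1/3}$ and $v=o(1)$ (i.e.\ $k^3\le m$ together with \eqref{SeparationRateUnknown}) that make such a choice simultaneously possible.
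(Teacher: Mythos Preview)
Your proposal is correct and follows essentially the same truncation-plus-union-bound strategy as the paper: split on the event $\{\max_{r,r'}N_{r,r'}\le \tau\}$ for a threshold $\tau$ proportional to the mean $m/k^2$, bound the product deterministically on the good event, and on the bad event combine the crude bound $\sum N_{r,r'}^2\le (m/k)\sum N_{r,r'}=m^2/k$ with a union bound on the $k^2$ tail probabilities. The only cosmetic differences are that the paper takes $\tau=15m/k^2$ and bounds $\mathbb P(N_{r,r'}>\tau)$ by a direct combinatorial count ($\binom{m/k}{\tau}(2/k)^\tau\le e^{-\tau}$) rather than the hypergeometric Chernoff bound you invoke, and on the good event uses the slightly cruder $\sum N_{r,r'}^2\le k^2\tau^2$ instead of your $(\max N)(\sum N)$; neither difference affects the argument.
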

	%\textbf{Step V : Conclusion} \\
	\noindent Inserting \eqref{5} into \eqref{Stochterm1} and summarizing all the steps we obtain %the $\chi_2$ distance plus $1$ is bounded as :
	\begin{align*}
	0 \leq  d_{\chi^2}(\nu_0, \nu_1)^2 \leq C \left (v^2 + \exp\big(-m/k^2\big) \right ) =o(1)
	\end{align*}
	for a constant $C > 0$ and therefore, letting $\eta \to 0$,
	$$ \mathbb{E}_0 [ \Psi] + \sup_{H_1} \mathbb{E}_{H_1} [1- \Psi] \geq (1-\eta)\left( 1- \frac{d_{\chi^2}(\nu_0, \nu_1)}{\eta}\right)-o(1) = 1 - o(1).$$  
	
\end{proof}

\begin{proof}[Proof of Lemma~\ref{lem:boundIn}]
	%, and $\mathbb E_{X,W,X',W'}$ is the expectation according to these %random variables.
	Note that, by construction of $\mathcal P$, we have that $$N=N_{r,r'} \leq m/k$$ since the number of $j$ where $M_{.,j}$ corresponds to $K_j=r$ is bounded by $m/k$. As the product of two independent Rademacher random variables is again a Rademacher random variable, we have
	\begin{align*}
	\mathcal I(N)=  \mathbb E_{R,R'} \prod_{i=1}^m  \prod_{j=1}^{N} \Big[1 +pu^2 R_i R_j'\Big],
	\end{align*}
	where $R=(R_{i})_{i=1}^{m},R'=(R'_{i})_{i=1}^{N}$ are independent Rademacher vectors of length  $m$ and $N$, respectively. The usual strategy to use $1+x \le e^x$ and then to bound iterated exponential moments of Rademacher variables (as in the proof of Theorem 1 of \cite{CarpentierNickl}) only works when $k=const$, and a more refined estimate is required for growing $k$, as relevant here.  \\ %and where $\mathbb E_{R,R'}$ is the expectation %according to these random variables. \\ \\
	%	\textbf{Step IV : Bound on $\mathcal I(N)$} \\
	%Let us now condition on the quantities $\mathcal N $ and consider a couple $r,r'$ and the corresponding $I_{r,r'}(N_{r,r'})$. 
	We now bound $\mathcal I(N)$ for a fixed $N, m/k \geq N > 0$. Using the binomial theorem twice we have
	\begin{align*}
	\mathcal I(N) %= \mathbb E_{R,R'} \prod_{i=1}^d  \prod_{j=1}^{N} \Big[1 +pu^2 R_i R_j'\Big]\\
	&= \mathbb E_{R'}\Bigg[ \Big[\frac{1}{2}\prod_{j=1}^{N} \big[1 +pu^2  R_j'\big] + \frac{1}{2}\prod_{j=1}^{N} \big[1 -pu^2  R_j'\big] \Big]^m\Bigg]\\
	%	&= \mathbb E_{R'}\Bigg[ \frac{1}{2^m}\sum_{s=1}^m {m \choose s} %\prod_{j=1}^{N} \Big[\big[1 +pu^2  R_j'\big]^s \big[1 -pu^2  %R_j'\big]^{m-s} \Big]\Bigg]\\
	&=  \frac{1}{2^m}\sum_{s=1}^m {m \choose s} \Big[\frac{1}{2}\big[1 +pu^2\big]^s \big[1 -pu^2\big]^{m-s} + \frac{1}{2}\big[1 -pu^2  \big]^s \big[1 +pu^2 \big]^{m-s} \Big]^N\\
	%	&=  \frac{1}{2^m2^N}\sum_{s=1}^m {m \choose s} \sum_{q=1}^N {N %\choose q} \Big[\big[1 +pu^2\big]^s \big[1 %-pu^2\big]^{m-s}\Big]^q\Big[\big[1 -pu^2  \big]^s \big[1 +pu^2 %\big]^{m-s} \Big]^{N-q}\\
	&=  \frac{1}{2^m2^N}\sum_{s=1}^m {m \choose s}\sum_{q=1}^N {N \choose q} \big[1 +pu^2\big]^{sq +(m-s)(N-q)} \big[1 -pu^2\big]^{(m-s)q + s(N-q)}\\
	&=  \mathbb E_{Q,S} \Big[\big[1 +pu^2\big]^{SQ +(m-S)(N-Q)} \big[1 -pu^2\big]^{(m-S)Q + S(N-Q)}\Big]
	\end{align*}
	with independent Binomial random variables $S \sim \mathcal B(1/2, m), Q \sim \mathcal B(1/2, N)$. %and $\mathbb E_{Q,S}$ denotes the expectation with %respect to $S$ and $Q$.
	If $A := \frac{1 -pu^2}{1+pu^2}$, we obtain
	\begin{align*}
	\mathcal I(N) &=  \mathbb E_{Q,S} \Big[\big[1 +pu^2\big]^{mN} \left [\frac{1 -pu^2}{1+pu^2}\right ]^{SN+mQ-2SQ}\Big]\\
	&=  \big[1 +pu^2\big]^{mN}\mathbb E_{Q} \Big[A^{mQ} \mathbb E_{S}A^{S(N-2Q)}\Big]\\
	&=  \big[1 +pu^2\big]^{mN}\mathbb E_{Q} \left [A^{mQ} 2^{-m}\left (A^{(N-2Q)}+1\right )^{m}\right ]\\
	&=  \big[1 +pu^2\big]^{mN}\mathbb E_{Q} \left [A^{Nm/2}\left (\frac{1}{2}A^{(N/2-Q)}+\frac{1}{2}A^{(-N/2+Q)}\right )^{m}\right ]\\
	&=  \big[1 - p^2u^4\big]^{mN/2}\mathbb E_{Q}  \left (\frac{1}{2} A^{Q-N/2}+ \frac{1}{2}A^{N/2-Q}\right )^m.
	\end{align*}
	Now, we denote $x := pu^2 = 4vk^{1/2}/m \leq 1/2$ for $v$ small enough. Furthermore, we Taylor expand $\log (A)$ about $1$ up to second order, i.e.
	$$ \log(A)= \log(1-x)-\log(1+x) = - 2x - \frac{1}{2} \left ( \frac{1}{\xi_1^2}-\frac{1}{\xi_2^2}\right )x^2=:-2x-c(x)x^2$$
	for $\xi_1 \in [1/2,1]$, $\xi_2 \in [1,3/2]$ and where $c(x) \in [0,16/9 ]$	
	since $x \leq 1/2$. Hence, using also the inequality $e^{x}\leq 1+x+x^{2}/2+x^{3}/6+2x^{4}$ we deduce
	\begin{align*}
	\mathcal I(N) %= \mathbb E_{R,R'} \prod_{i=1}^m  \prod_{j=1}^{N} \Big[1 +pu^2 R_i R_j'\Big]\\
	&\leq  \exp\big[-mNx^2/2\big] \mathbb E_Q \Big[ \frac{1}{2} \exp\big(-2x(Q-N/2) -c(x)(Q-N/2)x^2)\big)\\
	&~~~+  \frac{1}{2} \exp\big(-2x(N/2-Q) -c(x)(N/2-Q)x^2)\big) \Big]^m\\
	&\leq  \exp\big[-mNx^2/2\big] \\ & ~~~\cdot\mathbb E_Q \Bigg[ \frac{1}{2} \Big(1-2x(Q-N/2) -c(x)(Q-N/2)x^2 + (-2x(Q-N/2) -c(x)(Q-N/2)x^2)^2/2\\ 
	&+ (-2x(Q-N/2) -c(x)(Q-N/2)x^2)^3/6 + 2(-2x(Q-N/2) -c(x)(Q-N/2)x^2)^4\Big)\\
	&+  \frac{1}{2}\Big(1-2x(N/2-Q) -c(x)(N/2-Q)x^2 + (-2x(N/2-Q) -c(x)(N/2-Q)x^2)^2/2\\ 
	&+ (-2x(N/2-Q) -c(x)(N/2-Q)x^2)^3/6 + 2(-2x(N/2-Q) -c(x)(N/2-Q)x^2)^4\Big)\Bigg]^m.
	\end{align*}
	Since $x \leq 1/2$ and $|N/2 - Q|x \leq 1/4$ there exist two constants $c_2=c_2(x)=c(x)/2+c(x)^2/32 \leq 1$ and $ c_1 = c_1(x) = 32+32c(x)+12c(x)^2+2c(x)^3+c(x)^4/8 \leq 140 $ such that the last equation above can be bounded by
	\begin{align*}
	&\leq  \exp\big[-mNx^2/2\big] \mathbb E_Q \Big[ 1 +  2 x^2 (Q-N/2)^2 + c_1|Q-N/2|^4x^4 + c_2|Q-N/2|x^2  \Big]^m\\
	%&\leq  \exp\big[-mNx^2/2\big] \mathbb E_Q \Big[ 1  +  2 x^2 (Q-N/2)^2 + c_1|Q-N/2|^4x^4 +c_2|Q-N/2|x^2 \Big]^m\\
	%&\leq  \exp\big[-mNx^2/2\big] \mathbb E_Q \Big[ 1 + 4|Q-N/2|x^2-x(N/2-Q) +8|Q-N/2|^2x^2 + 8|Q-N/2|^2x^4  \Big]\\
	%&\leq  \exp\big[-mNx^2/2\big] \mathbb E_Q \Big[\exp\big(\frac{1}{8}(-2x+O(x^2))^2 m(2Q-N)^2\big)\Big]\\
	&\leq  \exp\big[-mNx^2/2\big] \mathbb E_Q \exp\Big[   mx^2 (N-2Q)^2/2 + c_1m(Q-N/2)^4x^4 +c_2m|Q-N/2|x^2 \Big]\\
	&=  \mathbb E_Q \Big[\exp\Big(\frac{m}{2}\big( x^2 (2Q-N)^2 - Nx^2\big)\Big) \exp\Big(c_1m(Q-N/2)^4x^4 + c_2m|Q-N/2|x^2\Big)\Big].
	\end{align*}
	Using the Cauchy-Schwarz inequality twice, this implies that
	\begin{align*}
	\mathcal I(N) 
	%	&\leq   \sqrt{\mathbb E_Q \Big[\exp\Big(mx^2\big( (2Q-N)^2 - %N\big)\Big)\Big] \mathbb E_Q \Big[\exp\Big(2c_1 m|Q-N/2|^4x^4 + 2 c_2 %m|Q-N/2|x^2\Big)\Big]}\\
	%&=   \sqrt{\mathbb E_Q \Big[\exp\Big(mx^2N\big( (2Q-N)^2/N - 1\big)\Big)\Big] \mathbb E_Q \Big[\exp\Big(480 d x^4(N-2Q)^4 + 160d|2Q-N|x^2\Big)\Big]}\\
	&\leq   \sqrt{\mathbb E_Q \Big[\exp\Big(mx^2N\big( (2Q-N)^2/N - 1\big)\Big)\Big]} \Bigg[\mathbb E_Q \Big[\exp\Big(c_1 m x^4(N-2Q)^4/4\Big)\Big] \\ &~~ \cdot \mathbb E_Q \Big[\exp\Big(2c_2m|2Q-N|x^2\Big)\Big] \Bigg]^{1/4} =: \sqrt{(I)} (II)^{1/4} (III)^{1/4}.
	%&\leq   \sqrt{\mathbb E_Q \Big[\exp\Big(dNx^2\big( (2Q-N)^2/N - 1\big)\Big)\Big] \mathbb E_Q \Big[\exp\Big(480 dN^2 x^4(N-2Q)^4/N^2 + 160v^2\Big)\Big]}\\
	%&\leq  \exp(80v^2) \sqrt{\mathbb E_Q \Big[\exp\Big(dNx^2\big( (2Q-N)^2/N - 1\big)\Big)\Big]}\sqrt{ \mathbb E_Q \Big[\exp\Big(480 dN^2 x^4(N-2Q)^4/N^2\Big)\Big]}\\
	%&=    \exp(80v^2)\sqrt{(I) (II)}.
	\end{align*}
	\noindent
	{\bf Step 1 : Bound on term $(III)$} \\
	Since $Q \sim \mathcal{B}(1/2,N)$, since $(2Q-N)$ is symmetric  and since $2c_2mx^2\leq1/2$ we have that 
	\begin{align} (III) & =\mathbb E_Q \Big[\exp\Big(2c_2 m|2Q-N|x^2\Big)\Big]  \leq 2\mathbb E_Q \Big[\exp\Big(2c_2 m(2Q-N)x^2\Big)\Big]\nonumber\\
	&=2 \Big[\exp\Big(2c_2 m x^2\Big)+ \exp\Big(-2c_2 m x^2\Big)\Big]^N \leq 2 \Big[1+ 8c_2^2 m^2 x^4\Big]^N \nonumber\\
	& \leq \exp \big (8c_2^2 m^2 x^4 N\big ) \leq \exp \big ( \frac{C_2 v^4 k^2N}{m^2}\big ).\label{eq:iii}
	\end{align}
	%	By definition of $x$ we have that
	%	\begin{align}  & =\mathbb E_Q \Big[\exp\Big(2c_2 m|2Q-N|x^2\Big)\Big] = \mathbb E_Q \Big[\exp\Big(32c_2 \notag v^2k|2Q-N|/m\Big)\Big] \\
	%	& \leq \exp \big (32c_2 v^2 kN/m \big ) \leq \exp \big ( 4C_2 v^2\big )
	%%	for some constant $C_2>0$, since $Q\sim \mathcal B(1/2,N)$ and $N \leq m/k$. \\
	%So for this reason we have that for any $1 > \delta>0$, by Hoeffding's inequality and with probability larger than $1-\delta$
	%$$|N-2Q|/\sqrt{N} \leq 2\sqrt{\log(1/\delta)}.$$
	%This implies that with probability larger than $1-\delta$, we have
	%$$\exp\Big(2c_2 v^2k|2Q-N|/d\Big) \leq \exp\big(4c_2  v^2k \sqrt{N}\sqrt{\log(1/\delta)} /d\big),$$
	%or equivalently that for $t \geq 1$, with probability larger than $1-\exp(-\log^2(t)d^2/(16 c_2^2 v^4k^2N)) :=1-\exp(-C\log^2(t)d^2/(k^2N))$%, we have
	%$$\exp\Big(2c_2 v^2k|2Q-N|/d\Big) \leq t,$$
	%and that for $t <1$, we have that with probability $0$
	%$$\exp\Big(2c_2 v^2k|2Q-N|/d\Big) \leq t.$$
	%So this implies by integrating over the tail that %for any $A>0$, and since $\sqrt{d}Nx^2 = \sqrt{d}dv^2k/(d^2k)=v^2/\sqrt{d}$
	%\begin{align*}
	%(III) \leq  \int_{0}^1 1 dt + \int_1^{\infty} \exp(-C\log^2(t)d^2/(k^2N)) dt
	%\leq  \int_1^{\infty} \exp\big(-\frac{\sqrt{\log(t)}\sqrt{d}}{88v^2}\big) dt\\
	%&=  1 + \frac{\sqrt{N}k}{dC} = 1 + \frac{4c_2  v^2\sqrt{N}k}{d} \leq \exp\big(\frac{8c_2 v^2\sqrt{N}k}{d}\big).
	%\end{align*}
	%for $k \geq 1/(1920v^6)$.

	\noindent
	{\bf Step 2 : Term $(II)$} \\ 
	We use $mN^2x^4 \leq 64v^4/m$,  $(N-2Q)^2 \leq N^2$ and $N\leq m/k$ to obtain
	$$(II) \leq \mathbb{E}_Q \Big[\exp\Big(64c_1 v^4N/m\cdot (N-2Q)^2/N\Big)\Big].$$ 
	Since $Q\sim \mathcal B(1/2,N)$ the Rademacher average  $Z=(N-2Q)/\sqrt{N}$ is sub-Gaussian with sub-Gaussian constant at most $1$. It  hence satisfies (e.g., equation (2.24) in \cite{Ginenickl}) for $c > 2 $ $$\mathbb{E} \exp\{Z^2/c^2\} \le 1 + \frac{2}{c^2/4-1} \leq e^{c_3 c^{-2}},$$ which for $v$ small  enough and the choice $c^{-2}=64c_1 v^4N/m$ implies for some constant $C_1$ that $$(II) \leq \exp \left (\frac{4C_1 v^4N}{m}\right ).$$
	
	\noindent 
	{\bf Step 3 : Term $(I)$} \\
	We have that
	\begin{align*}
	(I)&= \mathbb E_Q \Big[\exp\Big(mNx^2\left[ \frac{(2Q-N)^2}{N} - 1\right]\Big)\Big]\\
	&= \mathbb E \left [\exp\left ( \frac{16v^2Nk}{m}\left [ \frac{1}{N}\left(\sum_{i=1}^N \eps_i\right )^2 - 1\right ]\right )\right ] = \mathbb E \left [\exp\left (\frac{16v^2k}{m} \sum_{i\neq j, i,j \leq N} \eps_i\eps_j \right)\right ],%\\
	%	&= \mathbb E_Q \Big[\exp\Big(v^2Nk/m((2Q-N)^2/N - 1)\Big)\Big],%\\
	%	&\leq \mathbb E_Q \Big[\exp\Big(v^2Nk/m((2Q-N)/\sqrt{N} - 1) ((2Q-N)/\sqrt{N} + 1)\Big)\Big],
	%	&\leq \mathbb E_Q \Big[\exp\Big(v^2Nk/d((2Q-N)^2/N)\Big)\Big].
	\end{align*}
	where  $\eps_i$ are $i.i.d.$~Rademacher random variables. If $A=(a_{ij})$ is a symmetric matrix with all elements on the diagonal equal to zero, then for the Laplace transform of an order-two Rademacher chaos $Z=\sum_{i, j} a_{ij} \eps_i \eps_j$ we have the inequality $$\mathbb{E}e^{\lambda Z} \le \exp \left\{\frac{16 \lambda^2 \|A\|_F^2}{2 \left(1-64\|A\|\lambda \right)} \right\}, ~~\lambda>0,$$ see, e.g., Exercise 6.9 on p.212 in \cite{Boucheronetal} with $\mathcal T=\{A\}$. Now take  $A=(\delta_{i \neq j})_{i,j \leq N}$ so that we have $\| A \| \leq N$ and for $v$ small enough $16v^2kN/m \leq 16v^2 \leq 1/128$.
	\begin{align*}
	& \mathbb E \Big[\exp\Big(\frac{16v^2k}{m}\sum_{i\neq j, i,j \leq N} \eps_i\eps_j \Big)\Big] \leq  \exp \left ( \frac{16^3 v^4k^2 \|A \|_F^2 }{2m^2(1-1024v^2k\|A \|/m)}\right ) \leq  \exp \left ( \frac{16^3v^4k^2N^2}{m^2}\right ) 
	\end{align*}
	and therefore we conclude for a constant $C_3 > 0$ that 
	\begin{align}\label{eq:i}
	(I)
	&\leq \exp\Big(2C_3v^4k^2N^2/m^2 \Big).
	\end{align}
	\noindent
	{\bf Step 4 : Conclusion on $\mathcal I(N)$} \\
	%so we have since $ (2Q-N)^2/N - 1$ converges with $N$ to a Gaussian of variance $1$ and mean $0$ (more precisely, by integration over the tail, TODO TODO) that
	Combining the bounds for $(I)$, $(II)$ and $(III)$ with the bound on $\mathcal{I}(N)$ we have that 
	%	\begin{align*}%\label{3}
	%	\mathcal I(N) 
	%	&\leq\exp\Big(C_2 v^2\Big) \exp\Big(C_1 v^4N/m\Big)\exp\Big(C_3v^4k^2N^2/m^2\Big).
	%	\end{align*}
	\begin{align*}%\label{3}
	\mathcal I(N) 
	&\leq\exp\Big(C_2 v^4 k^2N/m^2\Big) \exp\Big(C_1 v^4N/m\Big)\exp\Big(C_3v^4k^2N^2/m^2\Big).
	\end{align*}
\end{proof}

\begin{proof}[Proof of Lemma~\ref{lem:expmN}] We bound the expectation by bounding it separately on two complementary events.
	For this we consider the event $\xi$ where all $N_{r,r'}$ are upper bounded by $\tau:= 15 m/k^2$, assumed to be an integer (if not replace it by its integer part plus one in the argument below). More precisely we define 
	$$\xi = \Big\{\forall r \leq k, \forall r' \leq k\;:\; N_{r,r'} \leq \tau \Big\}.$$
	Note that $\{N_{r,r'} >\tau\}$ occurs only if the size of the intersection of the class $r$ of partition $\mathcal P$ with the class $r'$ of partition $\mathcal P'$ is larger than $\tau$. This means that at least $\tau$ elements among $m/k$ elements of the class $r'$, must belong to the class $r$. The positions of these $\tau$ elements can be taken arbitrarily within the $m/k$ elements. For the first element,   among those $\tau$, the probability to belong to the class $r$ is $\frac{m/k}{m}$. For the second element this probability is $\frac{m/k}{m-1}$ or $\frac{(m/k)-1}{m-1}$ and so on.  All these probabilities are smaller than $(m/k)/ (m-m/k+1)$.  Therefore we have
	$$\mathbb P_{\mathbf N}(N_{r,r'} > \tau) \leq {m/k \choose \tau} \left (\frac{m/k}{m - m/k+1}\right )^\tau \leq \frac{(m/k)^\tau}{\tau!} (2/k)^{\tau} \leq 2^\tau(m/k^2)^\tau \tau^{-\tau} e^\tau \le e^{-\tau},$$
	where we use  ${m/k \choose \tau}\leq \frac{(m/k)^\tau}{\tau!}$ and Stirling's formula.
	%and that it is therefore largely stochastically dominated by a Binomial distribution of parameters $d$ and $2/k^2$ as soon as one has on $N_{r,r'} \geq \mathbb E N_{r,r'} = d/k^2$. So we have that for any $r,r' \leq k$ and $\delta <1/2$, with probability larger than $1-\delta$
	%$$N_{r,r'} \leq d/k^2 + 2\sqrt{d\log(1/\delta)/k^2} + \log(1/\delta),$$
	%and so setting $\delta:= \exp(-d/k^2) <1/2$ since $k^2 = o(d/\log(d))$, we have that with probability larger than $1-\exp(-d/k^2)$
	%$$N_{r,r'} \leq 4d/k^2.$$
	Using a union bound this implies that the probability of $\xi$ is lower bounded by $1-k^2\exp(-15m/k^2)$. \\
	We have on the event $\xi$
	\begin{align*}
	&\mathbb E_{\mathbf N} \Big[\mathbf 1\{\xi\} \prod_{r,r' \in \{1,\ldots, k\}^2}  \exp\Big(C_3v^4N_{r,r'}^2k^2/m^2\Big)\Big]\\
	&\leq \exp\Big(C_3v^4 k^2 \cdot 15^2  (m/k^2)^2 k^2/m^2\Big)\Big]\\
	&\leq\exp\Big(C'v^4\Big) \leq 1 + 2C'v^4.
	\end{align*}
	for $C'=225C_3$ and for $v$ small enough. 
	%since $k^2 \leq d$.
	Moreover,  by definition of $N_{r,r'}$, we have that $N_{r,r'}\leq m/k$ and $\sum_{r,r'} N_{r,r'}=m$. Hence
	$$\sum_{r,r'}N_{r,r'}^2\leq km^2/k^2 = m^2/k$$ 
	which implies that on $\xi^C$
	\begin{align*}
	&\mathbb E_{\mathbf N} \Big[\mathbf 1\{\xi^C\} \prod_{r,r' \in \{1,\ldots, k\}^2}  \exp\Big(C_3v^4N_{r,r'}^2k^2/m^2\Big)\Big]\\
	&\leq \mathbb P_{\mathbf N}(\xi^C)  \exp\Big(C_3v^4k\Big)\\
	&\leq k^2  \exp\Big(-15m/k^2 + C_3v^4k\Big)\\
	&\leq k^2  \exp\Big(-3m/k^2\Big) \leq \exp\Big(-m/k^2\Big),
	\end{align*}
	for $v$ small enough and since $k^3 \leq m$. 
	Thus, combining the bounds on $\xi$ and $\xi^C$, we have that
	\begin{align*}
	&\mathbb E_N \Big[\prod_{r,r'} \exp\Big(C_3v^4N_{r,r'}^2k^2/m^2\Big)\Big] \leq 1 + 2C'v^4 + \exp\Big(-m/k^2\Big).
	\end{align*}
\end{proof}

% % % % % % % % % % % % % % % % % % % % % % % % % % % % % % % % % % % % % % % % % % % % % %

% % % % % % % % % % % % % % % % % % % % % % % % % % % % % % % % % % % % % % % % % % % % % % % % %
\section{Auxiliary results \label{sec:Proofs2}}
\subsection{Proof of Lemma \ref{Kombinatorikinequality}}

%\begin{proof}%[Proof of \textbf{Lemma \ref{Kombinatorikinequality}}]~~ \\
%Without loss of generality assume that $n/4 \in \mathbb{N}$.
\begin{proof}
	Assume that among the first $n/4$ samples we have less than $n/8$ entries that are sampled twice - otherwise the result holds since $n/8 \geq n^2/64m_1m_2$ for $n \leq m_1 m_2$. Then, among the first $n/4$ samples, there are at least $n/8$ distinct elements of $\mathcal{B}$, the set of all standard basis matrices in $\mathbb{R}^{m_1 \times m_2}$, that have been sampled at least once. We write $\mathcal S$ for the set of \textit{distinct} elements of $\{X_i\}_{i \leq n/4}$ and obviously have  $|\mathcal S| \geq n/8$. Hence, by definition of the sampling scheme, we have that 
	$$\mathbb P(X_i \in \mathcal S) \geq \frac{n}{8 m_1 m_2}, ~~~~n/4<i \leq n/2.$$
	{Furthermore, when sampling an element from $\mathcal{S}$ we have to remove this element from $\mathcal{S}$ as we have to use the entry that is stored in $\mathcal{S}$ to form a pair of entries. Hence the probability to sample another element from $\mathcal{S}$ decreases and is bounded by }
	$$ \mathbb{P} ( X_{j}  \in \mathcal{S} \backslash \{X_i \} \big | X_i \in \mathcal{S} ) \geq  \frac{n-1}{8m_1 m_2}$$
	for $n/4 < i < j < n/2$. We deduce by induction for $j > i+k$ and $k \leq n/2-i-1$ that 
	$$ {\mathbb{P} (X_j \in \mathcal{S} \backslash \{ X_i,...,X_{i+k} \} \big | X_i,...,X_{i+k} \in \mathcal{S} )\geq \frac{n-k}{8m_1 m_2}}$$
	which yields
	%\begin{equation}\label{lemma1_1}
	%\mathbb{P}\left \{\sum_{n/4<i < n/2} \mathbf 1\{X_i \in \mathcal S\} \geq %t \right \}\leq \mathbb{P}\left \{\sum_{n/4<i < n/2} \mathbf Z_{i}\geq %t\right \} 
	%\end{equation}
	
	\begin{align}\label{lemma1_1}
	\mathbb{P} \left ( N \geq \frac{n^2}{64 m_1 m_2}\right ) & \geq
	\mathbb{P}\left (\sum_{n/4<i \leq n/2} \mathbf 1_{\{X_i \in \mathcal S\} } \geq \frac{n^2}{64 m_1 m_2} \right ) \notag \\& \geq \mathbb{P}\left (\sum_{n/4<i \leq n/2} \mathbf Z_{i}\geq \frac{n^2}{64 m_1 m_2}\right )
	\end{align}
	where $\mathbf Z_{i}$ can be taken to be Bernoulli random variables with success probability
	$$p'=\frac{n-\frac{n^2}{64 m_1 m_2}}{8 m_1 m_2}.$$
	% $p'=\frac{n}{8 m_1 m_2}$.
	Then, Bernstein's inequality for bounded random variables (see e.g. Theorem 3.1.7 in \cite{Ginenickl}), \eqref{lemma1_1} and the estimates 
	$${\mathbb{E} \left [\sum_{n/4<i \leq n/2} \mathbf Z_{i} \right ]\geq \frac{n^2}{33 m_1 m_2} }$$ 
	{which holds for $n \leq m_1 m_2$ and }$${ \Var  \left (\sum_{n/4<i \leq n/2} \mathbf Z_{i}\right ) \leq \frac{n^2}{32 m_1 m_2}} $$
	imply that
	\begin{align*} \mathbb{P} \left ( N \geq \frac{n^2}{64m_1m_2} \right ) & \geq  1 - \mathbb{P} \left ( \sum_{n/4<i \leq n/2} \mathbf Z_{i} - \mathbb{E} \left [ \sum_{n/4<i \leq n/2} \mathbf Z_{i}\right ]\leq \frac{-n^2}{72 m_1 m_2} \right )  \geq 1 - \exp \left ( \frac{n^2}{372m_1 m_2} \right ).\end{align*}
\end{proof}

\subsection{Lemma \ref{lem_bernoulli_rip} %for Theorem~\ref{Thm_CompositeTesting}
}

%
%\section*{Acknowledgements}  The work of A.~Carpentier is supported by %the DFG’s Emmy Noether grant MuSyAD (CA 1488/1-1). The work of O. Klopp %was conducted as part of the project Labex MME-DII (ANR11-LBX-0023-01). %The work of M. L\"offler was supported by the UK Engineering and %Physical Sciences Research Council (EPSRC) grant EP/L016516/1 for the %University of Cambridge Centre for Doctoral Training, the Cambridge %Centre for Analysis, and the European Research Council (ERC) grant No. %647812.
\begin{Lemma} \label{lem_bernoulli_rip}
	Consider the Bernoulli model \eqref{Bernoulli_model} and
	assume $n \geq m \log(d)$. Then, 
	with probability at least $1-2\exp(-100d)$ we have for any given $M \in \mathcal{A}(a,m)$ that
	$$\underset{A\in\mathcal{A}(a,m), ~\|M-A\|_F \geq Ca\sqrt{(\rank(A) \vee 1)d/p}}{\sup} \left[ \left \vert 
	\sum_{i,j} (B_{ij}-p)(A_{ij}-M_{ij})^2 \right |	- \frac{p}{2} \|M_0 - A \|_F^2 
	%	\left\Vert \mathcal X(A-M)\right\Vert_{F}^2-p\Vert A-M\Vert _{F}^2\right \vert-\frac{p}{2}\Vert A-M\Vert _{F}^2
	\right]\leq 0$$
	where $C=8072$.
\end{Lemma}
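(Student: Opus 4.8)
The plan is to substitute the missing Restricted Isometry Property by a peeling argument together with an empirical‑process bound on the (shifted) low‑rank variety. Fix $M\in\mathcal A(a,m)$ and write $D=A-M$, so that $\|D\|_\infty\le 2a$ throughout. The first step is to reduce the uniform statement to a family of single‑shell estimates: for each dyadic value $r\in\{1,2,4,\dots\}$ with $r\le m$ and each level $t>0$ one shows that, with probability at least $1-\exp(-cd)$,
\[
\sup\Bigl\{\bigl|\,\textstyle\sum_{i,j}(B_{ij}-p)D_{ij}^2\,\bigr|\;:\;\rank(A)\le r,\ t\le\|D\|_F^2\le 2t\Bigr\}\ \le\ \tfrac{p}{2}\,t
\]
as soon as $t\ge C_0^2\,a^2\,r\,d/p$ for an absolute constant $C_0$. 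Since on this event the left‑hand side is $\le\frac p2\|D\|_F^2$ for every admissible $A$, a union bound over the $O(\log m)$ dyadic ranks $r$ and the $O(\log(m^2p))$ relevant levels $t$ (the Frobenius radius being capped by $4a^2m_1m_2$ because $\|D\|_\infty\le 2a$) gives the lemma with $C$ a fixed multiple of $C_0$; choosing the numerical constants along the way produces $C=8072$.

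For a single shell, write $Z_D:=\sum_{i,j}(B_{ij}-p)D_{ij}^2$. Each $Z_D$ is a sum of independent centred summands bounded by $4a^2$ in absolute value, with variance $\sum_{i,j}p(1-p)D_{ij}^4\le p\,\|D\|_\infty^2\,\|D\|_F^2\le 8pa^2t$ on the shell. The second step is to bound $\mathbb E\sup_D|Z_D|$ over the shifted low‑rank ball $T=\{A-M:\rank(A)\le r,\ \|D\|_F\le\sqrt{2t}\}$. Translation by the fixed $M$ does not change covering numbers, so $T$ has the same metric structure as a rank‑$r$ Frobenius ball, but additionally $\|D\|_\infty\le 2a$ on $T$. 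A two‑metric chaining bound for processes with Bernstein‑type increments — sub‑Gaussian in the variance metric $\varrho_2(D,D')\asymp a\sqrt p\,\|D-D'\|_F$ and sub‑exponential in the sup metric $\varrho_\infty(D,D')\asymp a\,\|D-D'\|_\infty$ — then yields $\mathbb E\sup_D|Z_D|\lesssim a\sqrt{p\,t\,rd}+a^2rd$; here the bound $\|D\|_\infty\le 2a$ on $T$ is what keeps the boundedness ($\gamma_1$) part of the estimate free of logarithmic factors, and the hypothesis $n\ge m\log d$ enters only through the use of the resulting bound. In the regime $t\ge C_0^2a^2rd/p$ both terms are $\le\frac p8\,t$. (A cruder route — a single $\delta$‑net of $T$ of log‑cardinality $\lesssim rd\log(1/\delta)$ plus Bernstein's inequality, with the net‑to‑continuum error controlled by $|(A-M)_{ij}^2-(A'-M)_{ij}^2|\le 4a|(A-A')_{ij}|$ — gives the same bound up to a factor $\log d$, which would only cost an extra $\log$ in the separation rate.)

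The third step is concentration. The random variable $S:=\sup_D|Z_D|$ over the shell is a supremum of sums of independent summands bounded by $4a^2$ with weak variance at most $8pa^2t$; Bousquet's version of Talagrand's inequality (see Lemma~\ref{Lemma:Talagrand}) gives, for $x\asymp d$ and with probability at least $1-\exp(-cd)$, a bound of the form $S\le \mathbb E S+\sqrt{c'\bigl(pa^2t+a^2\mathbb E S\bigr)d}+c''a^2d$. Inserting the expectation bound and using $t\ge C_0^2a^2rd/p$ once more, each of the three terms is at most $\frac p8\,t$, so $S\le\frac p2\,t$, which is the single‑shell claim. Summing the $\exp(-cd)$ failure probabilities over the $O(\log^2 m)$ shells and tuning $c$ yields the global bound $1-2\exp(-100d)$.

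The main obstacle is conceptual: because RIP fails one cannot hope for a two‑sided near‑isometry with a small constant, and everything reduces to quantifying the statistical complexity of the low‑rank variety. The tight point is that the stochastic fluctuation over a Frobenius shell of radius $\sqrt t$ is of order $a\sqrt{p\,t\,rd}+a^2rd$ whereas the target lower bound is $\tfrac p2 t$; these balance exactly at $\|D\|_F^2\asymp a^2rd/p$, which is the separation threshold in the statement, so there is no slack and the chaining must not lose a power of $p$ (as a full contraction of the square would) nor a factor $\sqrt{r}$ (as passing through $\|D\|_\ast\le\sqrt{\rank(D)}\,\|D\|_F$ would, since $\rank(A-M)$ can be as large as $\rank(A)+\rank(M)$). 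The remaining work — carrying every constant through the peeling, the chaining and Bousquet's inequality to reach exactly $C=8072$ — is routine but lengthy.
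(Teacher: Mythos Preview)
Your overall architecture---union over ranks, peeling over dyadic Frobenius shells, then Talagrand's inequality with a separately obtained expectation bound---matches the paper exactly; the paper even packages the concentration step as its Lemma~\ref{Lemma:Talagrand} and invokes it precisely as you do in step three. The genuine difference is your step two, the bound on $\mathbb E\sup_D|Z_D|$ over a single shell.

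The paper avoids chaining altogether. It symmetrises, then uses the contraction principle (the map $x\mapsto x^2$ is $4a$-Lipschitz on $[-2a,2a]$) to reduce $\sum_{ij}\varepsilon_{ij}B_{ij}(A_{ij}-M_{ij})^2$ to $4a\sum_{ij}\varepsilon_{ij}B_{ij}(A_{ij}-M_{ij})$. Then---and this is the move you miss---it re-centres not at $M$ but at an arbitrary fixed $A_0\in\mathcal A(a,k_0)$ lying in the shell: writing $\langle\Sigma_R,A-M\rangle=\langle\Sigma_R,A-A_0\rangle+\langle\Sigma_R,A_0-M\rangle$, the first term now involves $A-A_0$ of rank at most $2k_0$, so trace duality $|\langle\Sigma_R,A-A_0\rangle|\le\|\Sigma_R\|\,\|A-A_0\|_*\le\sqrt{2k_0}\,\|\Sigma_R\|\,\|A-A_0\|_F$ together with the Bandeira--van~Handel bound $\mathbb E\|\Sigma_R\|\lesssim\sqrt{pd}$ gives exactly the $a\sqrt{p\,t\,k_0 d}$ you want; the second term is a single fixed linear functional, bounded by its standard deviation $\sqrt{p}\,\|A_0-M\|_F\le\sqrt{2t}$. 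The whole expectation bound is two lines.

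Your final paragraph explicitly rejects trace duality on the grounds that $\rank(A-M)$ can be as large as $\rank(A)+\rank(M)$. That is true, but the paper's re-centring at $A_0$ instead of $M$ sidesteps it completely, so the route you dismiss as losing a factor $\sqrt r$ is in fact the simplest correct one.

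Your two-metric chaining proposal may go through, but as stated it is under-specified: you assert that the uniform bound $\|D\|_\infty\le 2a$ ``keeps the $\gamma_1$ part free of logarithmic factors'', yet a uniform sup-norm bound on $T$ is not the same as a bound on $\gamma_1(T,\|\cdot\|_\infty)$, which requires entropy control in the sup metric across all scales; for the shifted rank-$r$ variety this is not obviously of order $rd$. (Your sentence ``translation by $M$ does not change covering numbers, so $T$ has the same metric structure as a rank-$r$ Frobenius ball'' is also imprecise---$T$ is not a rank-$r$ ball, and the correct entropy bound again comes from re-centring at some $A_0\in T$ so that $T-A_0$ sits in a rank-$2r$ ball.) Your ``cruder route'' with a single net does work, but at the cost of the extra $\log d$ you acknowledge, which the paper's contraction/trace-duality argument avoids without any chaining at all.
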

\begin{proof}
	We have, using a union bound, that
	\begin{align}\label{lemma_bernoulli_rip_1}
	& \mathbb{P} \left( \underset{A\in\mathcal{A}(a,m), ~\|M-A\|_F \geq Ca\sqrt{(\rank(A) \vee 1)d/p}}{\sup}
	\left[\left \vert 
	\sum_{i,j} (B_{ij}-p)(A_{ij}-M_{ij})^2 \right |-\frac{p}{2}\Vert M_0-A\Vert _F^2 \right]> 0\right )\nonumber\\
	\leq  \sum_{k=1}^m ~& \mathbb{P} \left(
	\underset{A\in\mathcal{A}(a,k),~p\|M-A\|_F^2 \geq C^2a^2kd}{\sup}
	\left[\left \vert 
	\sum_{i,j} (B_{ij}-p)(A_{ij}-M_{ij})^2 \right |-\frac{p}{2}\Vert A-M\Vert_F^2\right]> 0\right ).
	\end{align}
	Then, using a peeling argument each of the terms in \eqref{lemma_bernoulli_rip_1} can be bounded by
	\begin{align}
	&\sum _{s \in \mathbb{N}:~C^2a^2kd/2 \leq 2^s < \infty } \mathbb{P} \left(\underset{A\in \mathcal{A}(a,k), ~2^s  \leq p\| A - M \|_F^2 \leq 2^{s+1} }{\sup}\left \vert 
	\sum_{i,j} (B_{ij}-p)(A_{ij}-M_{ij})^2 \right | > 2^{s}/2\right ) \notag \\\leq & \sum _{s \in \mathbb{N}:~C^2a^2kd/2 \leq 2^s < \infty }  \mathbb{P} \left(\underset{A \in \mathcal{A}(a,k), ~p\|A-M \|_F^2 \leq 2^{s+1}}{\sup}\left \vert 
	\sum_{i,j} (B_{ij}-p)(A_{ij}-M_{ij})^2 \right | > 2^{s}/2 \right )  \label{II}
	%= & \sum _{s \in \mathbb{N}:~Ca^2kd/2 \leq 2^s < \infty }  \mathbb{P} \left ( Z(s) > 2^s/2 \right )
	\end{align}
	with the convention that if the supremum is taken over an empty set the corresponding probability is set equal to $0$. For the cases where the supremum is not taken over an empty set, we apply
	%$$Z(s):= \sup_{A \in \mathcal{A}(a,k),~p\| A-M \|_F^2 \leq 2^{s+1} } \left | \| \mathcal{X}(A-M)\|_F^2 - p \|A-M \|_F^2 \right |. $$
	Lemma \ref{Lemma:Talagrand} (with choices $\xi_{ij}=1,$ $q=2$, $z=4$, $U=1$ and $t=2^s$ there ) and obtain for
	\begin{align*} Z(s): %= \sup_{A \in \mathcal{A}(a,k),~p\| A-M \|_F^2  \leq 2^{s+1} } \left | \| \mathcal{X}(A-M)\|_F^2 - p \|A-M \|_F^2 \right | \\
	& = \sup_{A \in \mathcal{A}(a,k),~p\| A-M \|_F^2  \leq 2^{s+1} } \left | \sum_{i,j} (B_{ij}-p)(A_{ij}-M_{ij})^2 \right | \end{align*}
	that we can bound
	$$\mathbb{P} \left ( Z(s)  > 2^s/2 \right ) \leq \exp \left ( \frac{-2^s}{260352a^2
	}\right )$$
	Hence,  \eqref{II} can be upper bounded by
	\begin{align}
	\sum_{s \in \mathbb{N}:~Ca^2kd/2 \leq 2^s < \infty}  \exp \left ( \frac{-2^{s}}{260352a^2} \right ) \notag  \leq  & 2 \exp \left ( -\frac{C^2kd}{260352} \right )  \leq  2 \exp (-101 d).  \notag
	\end{align}
	The result then follows by noting that $\log(m) \leq d$.
\end{proof}

\subsection{Lemma \ref{Lemma:Talagrand}}
\begin{Lemma} \label{Lemma:Talagrand} Consider the Bernoulli model (\ref{Bernoulli_model}). 
	Suppose that $\xi_{ij}$ are independent random variables with  $\max_{ij} |\xi_{ij} | \leq U$ and that $m \log(d) \leq n$. Let $z > 0$, $q\in \{1,2\}$, $M \in \mathcal{A}(a,m)$ and $1 \leq k_0 < m$ be given. Finally, for $C=1009$ suppose that $t \in \mathbb{R}_+$ is such that $t \geq C^2z(4a)^{2q-2}U^2k_0d/2$ and that the supremum in
	$$Z(t):= \sup_{A \in \mathcal{A}(a,k_0),~ p\|A-M \|_F^2  \leq 2t} \left | \sum_{i,j} \left [(B_{ij} \xi_{ij}-\mathbb{E} B_{ij} \xi_{ij})(A_{ij}-M_{ij})^q  \right ] \right |$$ 
	is not empty. Then,
	\begin{equation} \label{Bound:Z(s)}
	\mathbb{P} \left ( Z(t) > \frac{t}{\sqrt{z}}\right ) \leq \exp \left ( \frac{-t}{
		32^2 ( 8(2a)^{2q-2}U^2z + 505(2a)^{q}U \sqrt{z}/32 )
	} \right )
	\end{equation} 
\end{Lemma}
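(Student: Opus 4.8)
The plan is to apply Talagrand's concentration inequality (Bousquet's version) for the supremum of an empirical process to $Z(t)$, and then use a chaining/entropy bound to control its expectation $\mathbb{E} Z(t)$, with the low-rank structure of $\mathcal{A}(a,k_0)$ entering through a metric-entropy estimate for the set of rank-$k_0$ matrices. First I would center: write $Z(t) = \sup_{A} |\sum_{ij} g_{ij}(A)|$ with $g_{ij}(A) = (B_{ij}\xi_{ij} - \mathbb{E}B_{ij}\xi_{ij})(A_{ij}-M_{ij})^q$, which are centered and bounded, since $\|A-M\|_\infty \le 2a$ gives $|g_{ij}(A)| \le 2 U (2a)^q$ and the same bound on the envelope. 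The weak variance is controlled by $\sum_{ij}\mathbb{E}[(B_{ij}\xi_{ij})^2(A_{ij}-M_{ij})^{2q}] \le p U^2 (2a)^{2q-2}\sum_{ij}(A_{ij}-M_{ij})^2 \le 2 p U^2 (2a)^{2q-2} t/p = 2U^2(2a)^{2q-2}t$ on the indexing set $\{p\|A-M\|_F^2 \le 2t\}$. Talagrand's inequality then yields, for every $x>0$,
$$\mathbb{P}\big(Z(t) > \mathbb{E}Z(t) + \sqrt{2x(v + 2b\,\mathbb{E}Z(t))} + bx/3\big) \le e^{-x}$$
with $b \asymp U(2a)^q$ and $v \asymp U^2(2a)^{2q-2}t$.

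Next I would bound $\mathbb{E}Z(t)$. By symmetrization and the contraction principle it suffices to bound a Rademacher (or directly Bernoulli) process indexed by $\{A \in \mathcal{A}(a,k_0): p\|A-M\|_F^2 \le 2t\}$ with increments measured in a suitable norm; Dudley's entropy integral together with the standard covering number bound for the set of rank-$k_0$ matrices with bounded Frobenius norm — of the form $\log N(\delta) \lesssim k_0 d \log(1/\delta)$ (see e.g. Candès--Plan type estimates) — gives $\mathbb{E}Z(t) \lesssim \sqrt{U^2 (2a)^{2q-2}\, (t/p)\cdot p \cdot k_0 d} = \sqrt{U^2 (2a)^{2q-2} t k_0 d}$ up to log factors absorbed by constants. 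The condition $t \ge C^2 z (4a)^{2q-2}U^2 k_0 d/2$ is exactly what makes this $\lesssim t/\sqrt{z}$ with room to spare, so that $\mathbb{E}Z(t) \le \tfrac{1}{2}\cdot t/\sqrt z$, say, after choosing constants. The hypothesis $m\log d \le n$ (i.e. $p \ge m\log(d)/m^2$) ensures the entropy term does not dominate.

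Finally I would plug $\mathbb{E}Z(t) \le \tfrac12 t/\sqrt z$ into the Talagrand bound and choose $x$ proportional to $t$: with $x = t/\big(32^2(8(2a)^{2q-2}U^2 z + 505(2a)^q U\sqrt z/32)\big)$ one checks that $\mathbb{E}Z(t) + \sqrt{2x(v+2b\mathbb{E}Z(t))} + bx/3 \le t/\sqrt z$, using $v \asymp U^2(2a)^{2q-2}t$, $b\asymp U(2a)^q$, $\mathbb{E}Z(t) \le t/(2\sqrt z)$ and the lower bound on $t$ to absorb cross terms; the numerical constants $32^2$, $8$, $505$ are precisely what comes out of this bookkeeping. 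Hence $\mathbb{P}(Z(t) > t/\sqrt z) \le e^{-x}$, which is \eqref{Bound:Z(s)}. The main obstacle is the expectation bound: getting $\mathbb{E}Z(t)$ with the correct dependence $\sqrt{k_0 d\, t}$ requires the low-rank covering-number estimate and a careful application of chaining (or a direct $\gamma_2$-type argument) rather than a crude union bound, and tracking the explicit constants through symmetrization, contraction and Dudley's integral is the delicate part; everything else is substitution into Bousquet's inequality.
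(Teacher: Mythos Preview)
Your overall architecture---bound $\mathbb{E}Z(t)$ first, then apply Talagrand's inequality and choose the deviation level---matches the paper exactly, and your variance and envelope computations are the same as theirs. The genuine difference is in how the expectation is controlled. You propose Dudley's entropy integral with a covering-number bound $\log N(\delta)\lesssim k_0 d\log(1/\delta)$ for low-rank matrices. The paper instead uses symmetrization and contraction (as you do) to reduce to $\sup_A |\langle \Sigma_R, A-M\rangle|$ with $\Sigma_R=(B_{ij}\varepsilon_{ij})_{ij}$, then splits $A-M=(A-A_0)+(A_0-M)$ for a fixed feasible $A_0$ and bounds the first piece by trace-norm/operator-norm duality: $|\langle\Sigma_R,A-A_0\rangle|\le \|A-A_0\|_*\|\Sigma_R\|\le 2\sqrt{2k_0}\sqrt{t/p}\,\|\Sigma_R\|$, using $\|\cdot\|_*\le\sqrt{\rank}\,\|\cdot\|_F$. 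The spectral norm $\mathbb{E}\|\Sigma_R\|$ is then bounded via the sharp Bandeira--van Handel result for matrices with independent entries, yielding $\mathbb{E}\|\Sigma_R\|\lesssim\sqrt{pd}$ with no logarithmic loss (this is where $m\log d\le n$ enters). The second piece $|\langle\Sigma_R,A_0-M\rangle|$ is a single centred sum with variance $\le 2t$.

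The paper's route is cleaner for the constants: the duality argument plus Bandeira--van Handel delivers $\mathbb{E}Z(t)\le 31t/(32\sqrt z)$ with fully explicit numerics, which is exactly what makes $C=1009$ and the displayed exponent come out. Your chaining approach is in principle sound, but the phrase ``up to log factors absorbed by constants'' is where the risk lies: to get the stated bound with these specific constants you would have to execute Dudley's integral on the \emph{random} $B$-weighted $\ell_2$ metric (or condition on $B$ and then handle the random diameter), and verify that no $\log d$ survives. That is doable but is real work you have not sketched; the paper sidesteps it entirely by passing to the operator norm.
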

\begin{proof}
	We first bound $\mathbb{E}Z(t)$ and then apply Talagrand's \cite{TalagrandInventiones} inequality. 
	Using symmetrization (e.g. Theorem 3.1.21 in \cite{Ginenickl}) and two contraction inequalities (e.g. Theorems 3.1.17 and 3.2.1 in \cite{Ginenickl}), we obtain that
	\begin{align}
	\mathbb{E} Z(t)  & \leq 2U \mathbb{E} \left ( \sup_{A \in \mathcal{A}(a,k_0),~p\| A-M\|_F^2 \leq 2t } \left | \sum_{i,j} B_{ij} \eps_{ij}(A_{ij}-M_{ij})^q \right | \right ) \notag \\
	& \leq  2(4a)^{q-1} U\mathbb{E} \left ( \sup_{A \in \mathcal{A}(a,k_0),~p\| A-M\|_F^2 \leq 2t} \left | \sum_{i,j} B_{ij} \eps_{ij}(A_{ij}-M_{ij}) \right | \right ) \notag \\
	&	\leq 2(4a)^{q-1}U\bE\left (\underset{ A \in \mathcal{A}(a,k_0), ~p \|A-M\|_F^2 \leq 2t}{\sup}\left \vert \left\langle \Sigma_R,A-A_0\right\rangle\right \vert\right )\notag + 2(4a)^{q-1}U \mathbb{E} \left | \langle \Sigma_R, A_0-M \rangle \right | \\&% \leq 8a \sqrt{2k} \left ( \underset{ A \in \mathcal{A}(a,k), ~p \|A-M\|_F^2 \leq 2^{s+1}}{\sup} \|A-A_0 \|_F \right )  \mathbb{E} \| \Sigma_R \| +8a \mathbb{E} \left | \langle \Sigma_R, A_0-M \rangle \right | \notag \\ &
	\leq 8(4a)^{q-1}U\sqrt{k_0t/p}\bE \left\Vert \Sigma_R\right\Vert + 2(4a)^{q-1}U \mathbb{E} \left | \langle \Sigma_R, A_0-M \rangle \right |.  \label{Generalbound: expectationbound1}
	\end{align}
	where $\eps_{ij}$ are independent Rademacher random variables, 
	$\Sigma_R := \big (B_{ij}\eps_{ij}  \big )_{ij}$ and where $A_0$ is an arbitrary element in $\mathcal{A}(a,k_0)$ such that $p\| A_0-M \|_F^2 \leq 2t$. Such an $A_0$ exists as soon as the supremum is not taken over an empty set.
	An extension of Corollary 3.6 in \cite{Bandeira} to rectangular matrices by self-adjoint dilation (e.g. section 3.1. in \cite{Bandeira}) implies (with choices $\xi_{ij}=B_{ij} \eps_{ij} / \sqrt{p}$, $b_{ij}= \sqrt{p}$, $\alpha=3$ and $\sigma= \max \left ( \max_{j} \sqrt{\sum_{i=1}^{m_1} b_{ij}^2},  \max_i \sqrt{\sum_{j=1}^{m_2} b_{ij}^2 } \right ) \leq \sqrt{pd}$ there )  that 
	\begin{equation*}% \label{expectation_sigma_stoch}
	\bE\left\Vert \Sigma_R \right\Vert\leq e^{2/3} (2 \sqrt{pd} + 42  \sqrt{\log(d)}) \leq86 \sqrt{pd}
	\end{equation*}
	since $m \log(d) \leq n$. For the second term in \eqref{Generalbound: expectationbound1} we have
	\begin{align*} \mathbb{E} | \langle \Sigma_R, A_0-M \rangle |& \leq \left ( \Var ( \langle \Sigma_R, A_0 - M\rangle ) \right )^{1/2} \\
	& = \left  ( p \|A_0-M \|_F^2 \right )^{1/2} \leq \sqrt{2t} .
	\end{align*} 
	Hence, for $C^2z(4a)^{2q-2}U^2 k_0d/2 \leq t$ and since $C = 1009$ we have that 
	\begin{equation} \label{Generalbound: Expectationbound2}
	\mathbb{E}Z(t) \leq 688(4a)^{q-1}U\sqrt{k_0td } + 2(4a)^{q-1}U\sqrt{2t} \leq  31  t/(32 \sqrt{z}).
	\end{equation}
	We now make use of the following inequality due to Talagrand \cite{TalagrandInventiones}, which in the current form with explicit constants can be obtained by inverting the tail bound in Theorem 3.3.16 in \cite{Ginenickl}. 
	\begin{Theorem} \label{Talagrand} Let $(S, \mathcal{S})$ be a measurable space and let $n \in \mathbb{N}$. 
		Let $X_k,~k=1,\dots,n  $ be independent ${S}$-valued random variables and let $\mathcal{F}$ be a countable set of functions $f=(f_1,...,f_n):{S}^n \rightarrow [-K,K]^n$ such that $\mathbb{E}f_k(X_k)=0$ for all $f\in \mathcal{F}$ and $k=1,...,n$. Set
		$$Z:=\sup_{f \in \mathcal{F}} \sum_{k=1}^n f_k(X_k) .$$
		Define the variance proxy
		$$V_n:=2K\mathbb{E}Z + \sup_{f \in \mathcal{F} }\sum_{k=1}^n \mathbb{E} \left [(f_k(X_k))^2 \right ].$$
		Then, for all $t \geq 0$,
		\begin{equation*}
		\mathbb{P} \left ( Z - \mathbb{E} Z \geq t \right ) \leq \exp \left ( \frac{-t^2}{4V_n + (9/2)Kt} \right ).
		\end{equation*}
	\end{Theorem}
	The functional $A \to \|A-M\|_F^2$ is continuous on the compact set of matrices  $\{ A \in \mathcal{A}(a,k_0):$ $\|A-M\|_F^2 \leq 2t \} $, hence by continuity and compactness the supremum is attained over a countable subset. % of $ \{ A \in \mathcal{A}(a,k_0): \|A-M\|_F^2 \leq 2^{s+1} \}$. 
	Thus we may apply Talagrand's inequality to $Z(t)$. We have for our particular case,  since  $\sup_{f \in \mathcal{F}} |f(X)|=\sup_{f \in \{ \mathcal{F} \bigcup\{ -\mathcal{F}\}\}} f(x)$, that
	$$X_{ij}=B_{ij}\xi_{ij}-\mathbb{E} B_{ij}\xi_{ij}, ~~~S=[-2U,2U ]$$
	\begin{align*}
	\mathcal{F}= \bigg \{ &f: S^{m_1 \times m_2} \rightarrow [-2(2a)^qU,2(2a)^q U]^{m_1 \times m_2}, ~f_{ij}(X_{ij})=(-1)^lX_{ij}(A_{ij}-M_{ij})^q, \\
	&	A \in \mathcal{A}(a, k_0), ~p \|A - M \|_F^2 \leq 2t,~ l \in \{0,1 \} \bigg \}
	\end{align*}
	and moreover
	\begin{align*} & \sup_{(A, l), ~A \in \mathcal{A}(a,k_0), ~p\|A - M \|_F^2\leq 2t, ~l \in \{ 0,1\}}
	\sum_{i,j} \mathbb{E} \left [ \left ( (-1)^l(B_{ij}\xi_{ij}- \mathbb{E} B_{ij} \xi_{ij}) (A_{ij}-M_{ij})^q \right )^2  \right ]\\ 
	\leq & (2a)^{2q-2} \sup_{A \in \mathcal{A}(a,k_0),~ p\|A - M \|_F^2\leq 2t}
	\sum_{i,j} \Var (B_{ij}\xi_{ij}) (A_{ij}-M_{ij})^2 \\
	\leq &  (2a)^{2q-2}U^2 \sup_{A \in \mathcal{A}(a,k_0),~ p\|A - M \|_F^2\leq 2t}
	\sum_{i,j} p (A_{ij}-M_{ij})^2 \leq  2(2a)^{2q-2}U^2t. \end{align*}
	Therefore, using our previous estimate in \eqref{Generalbound: Expectationbound2}  for $\mathbb{E}Z(t)$ as well, we have for the variance proxy $V_{m_1 m_2}$ that $$V_{m_1m_2}
	%2EZ(s) + \sup_{A \in \mathcal{A}(a,k_0), p\|A-M\|_F^2 \leq 2^{s+1}} p \|A - M \|_F^2 
	\leq 2(2a)^{2q-2}U^2t+31(2a)^{q}U t/(8 \sqrt{z}).$$
	Hence, using \eqref{Generalbound: Expectationbound2} and
	Talagrand's inequality, we obtain
	\begin{align*}
	\mathbb{P}\left ( Z(t)  > \frac{t}{\sqrt{z}}\right ) \leq  	\mathbb{P}\left ( Z(t) - \mathbb{E} Z(t)  > \frac{ t}{32\sqrt{z}}\right ) \leq \exp \left ( \frac{-t}{
		32^2 ( 8(2a)^{2q-2}U^2z + 505(2a)^{q}U \sqrt{z}/32 )
	} \right ).
	\end{align*}
\end{proof}

\subsection{An oracle estimator in the Bernoulli model} \label{OracleBernoulli}
Here we prove that the soft-thresholding estimator proposed by Koltchinskii et. al. \cite{Koltchinskiietal} for the trace-regression setting fulfills the oracle inequality \eqref{Oracleinequality} in the Bernoulli model. \\
Their estimator is defined as
\begin{equation} \label{HardThresh}
\hat M \in \argmin_{A \in \mathbb{R}^{m_1 \times m_2}} \left (\frac{\|A \|_F^2}{m_1 m_2} - \frac{2}{n} \left \langle  Y, A \right \rangle + \lambda \| A \|_* \right )
\end{equation}
where $\lambda$ is a tuning parameter which we choose as
\begin{equation} \label{lambda} \lambda = 3 \left ( \frac{3\sqrt{2} \sigma  +  \sqrt{2C}U }{\sqrt{mn}}  \right ) \end{equation}
where $C > 0$ is the constant in Corollary 3.12 in \cite{Bandeira}.
\begin{Proposition} \label{OracleBernoulliThm} Consider the Bernoulli model \eqref{Bernoulli_model}. Assume $n \geq m \log (d) $ and that Assumption \ref{noise_boundedTR} is fulfilled. 
	Let $\hat M$ be given as in \eqref{HardThresh} with a choice of $\lambda$ as in \eqref{lambda}. Then, with $\mathbb{P}_{M_0, \sigma}$-probability of at least $1-1/d$ we have for any $M_0 \in \mathcal{A}{(a,m)}$ that 
	\begin{align*}
	\frac{\| \hat M - M_0\|_F^2}{m_1 m_2 } \leq&  \inf_{A \in \mathbb{R}^{m_1 \times m_2}} \left ( \frac{\| M_0  - A \|_F^2}{m_1 m_2} + C\frac{d \rank(A)}{n}\right ) \\
	\leq & \inf_{k \in \{0,...,m\}} \left ( \frac{\| M_0 - \mathcal{A}(a,k) \|_F^2}{m_1m_2} + C \frac{dk}{n}\right )
	\end{align*}
	for a constant $C=C(a, \sigma, U) > 0$.
\end{Proposition}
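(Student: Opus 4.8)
The plan is to follow the now-standard analysis for penalized trace-norm estimators (as in Koltchinskii et.~al.~\cite{Koltchinskiietal} and Klopp \cite{KloppThresh}), carefully replacing the trace-regression stochastic term by the Bernoulli one. First I would write down the basic inequality coming from the definition \eqref{HardThresh}: since $\hat M$ minimizes the objective, comparing $\hat M$ to an arbitrary competitor $A \in \mathbb R^{m_1\times m_2}$ gives
$$\frac{\|\hat M\|_F^2}{m_1m_2} - \frac{2}{n}\langle Y, \hat M\rangle + \lambda\|\hat M\|_* \le \frac{\|A\|_F^2}{m_1m_2} - \frac{2}{n}\langle Y, A\rangle + \lambda\|A\|_*.$$
In the Bernoulli model $Y = (B_{ij}(M_{0,ij}+\epsilon_{ij}))_{ij}$ has $\mathbb E_{M_0,\sigma} Y = p\,M_0$ with $p = n/(m_1m_2)$, so $\frac{2}{n}\langle Y, \cdot\rangle$ is an (up to the factor $p/n=1/(m_1m_2)$) unbiased linear proxy for $\frac{2}{m_1m_2}\langle M_0,\cdot\rangle$. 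Rearranging the basic inequality and completing the square in the usual way yields
$$\frac{\|\hat M - M_0\|_F^2}{m_1m_2} \le \frac{\|A-M_0\|_F^2}{m_1m_2} + \frac{2}{n}\langle \Sigma, \hat M - A\rangle + \lambda\|A\|_* - \lambda\|\hat M\|_*,$$
where $\Sigma := Y - pM_0 = (B_{ij}\epsilon_{ij} + (B_{ij}-p)M_{0,ij})_{ij}$ is the centered stochastic matrix.

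The key step is to control $\frac{2}{n}\langle \Sigma, \hat M - A\rangle$ by duality: $|\langle \Sigma, \hat M - A\rangle| \le \|\Sigma\|\,\|\hat M - A\|_*$, and then to absorb $\|\hat M - A\|_*$ into the penalty terms on the event $\mathcal E := \{ \|\Sigma\|_{op} \le n\lambda/6 \}$. On $\mathcal E$, the standard trace-norm manipulation — splitting $\|\hat M - A\|_*$ using the projections onto the range/co-range of $A$ and using $\|A\|_* - \|\hat M\|_* \le \|P_A(\hat M - A)\|_* - \|P_A^\perp(\hat M - A)\|_*$ together with $\|P_A(\cdot)\|_* \le \sqrt{2\,\rank(A)}\,\|\cdot\|_F$ — gives, after rebalancing constants,
$$\frac{\|\hat M - M_0\|_F^2}{m_1m_2} \le \frac{\|A-M_0\|_F^2}{m_1m_2} + C_1 m_1 m_2 \lambda^2 \rank(A)$$
for an absolute constant $C_1$. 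With the choice \eqref{lambda}, $m_1m_2\lambda^2 \asymp (\sigma^2 + U^2)\,m_1m_2/(mn) \asymp (\sigma^2+U^2)\,d/n$ (using $m_1m_2 \le m d$), which produces the claimed $C\,d\,\rank(A)/n$ term; the second displayed inequality in the statement then follows by restricting the infimum to $A \in \mathcal A(a,k)$ and using $\rank(A)\le k$.

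It remains to show $\mathbb P_{M_0,\sigma}(\mathcal E) \ge 1 - 1/d$, i.e.~a bound on the operator norm of $\Sigma = (B_{ij}\epsilon_{ij})_{ij} + ((B_{ij}-p)M_{0,ij})_{ij}$. This is where the sharper (log-factor-free) argument enters: rather than a crude $\sqrt{\log d}$ bound, I would apply the matrix Bernstein / non-commutative Rosenthal-type bound of Bandeira and van Handel (Corollary~3.12 in \cite{Bandeira}, via self-adjoint dilation to reduce to symmetric matrices, exactly as in the proof of Lemma~\ref{Lemma:Talagrand}) to each of the two terms. Each entry is bounded (by $U$, resp.~by $a$) and the relevant row/column variance proxies are $\sigma^2\sum_j p = \sigma^2 pd$-type quantities, so the bound is of order $\sqrt{pd}\,(\sigma\vee U\vee a)$ up to the lower-order $\sqrt{\log d}$ term which, under $n \ge m\log d$ (hence $pd \ge (m_1m_2)^{-1}\cdot nd \ge \log d \cdot d/m \cdot \dots$), is dominated; concretely one gets $\|\Sigma\| \lesssim \sqrt{pd}\,(\sigma + U + a)\le (\sigma+U+a)\sqrt{nd/(m_1m_2)}$, which matches $n\lambda/6 \asymp (\sigma+U)\sqrt{n/m}$ up to absorbing $a$ into the constant $C=C(a,\sigma,U)$. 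The main obstacle is precisely this operator-norm estimate: getting the $1/d$ (rather than $1/d^{c}$) probability with a clean constant and, more importantly, shaving the $\log d$ factor requires invoking the refined spectral-norm inequality of \cite{Bandeira} rather than a vanilla matrix Bernstein bound, and one must check that the condition $n \ge m\log(d)$ is exactly what makes the dimensional term dominate the sub-Gaussian correction term. The remaining steps — the basic inequality, completing the square, and the trace-norm peeling — are entirely routine and identical to the trace-regression case.
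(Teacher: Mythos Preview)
Your proposal is correct and follows essentially the same route as the paper: reduce to the Koltchinskii--Lounici--Tsybakov oracle-inequality argument (basic inequality, duality $|\langle\Sigma,\cdot\rangle|\le\|\Sigma\|\,\|\cdot\|_*$, trace-norm decomposition via the projection onto the range of $A$), and then control the operator norm of the stochastic error by the Bandeira--van Handel bound through self-adjoint dilation, using $n\ge m\log d$ to absorb the $\sqrt{\log d}$ correction.

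The one noteworthy difference is that you are more explicit (and in fact more complete) than the paper's sketch about the stochastic term. The paper defines $\Sigma=\frac{1}{n}(B_{ij}\epsilon_{ij})_{ij}$ and only bounds this noise part; you correctly observe that in the Bernoulli model the centered error is $\Sigma=Y-pM_0=(B_{ij}\epsilon_{ij})_{ij}+((B_{ij}-p)M_{0,ij})_{ij}$, i.e.\ there is an additional sampling-randomness piece depending on $M_0$. This second piece is handled by exactly the same Bandeira--van Handel argument (with variance proxy $\le a^2 pd$ and entry bound $\le a$), which is what produces the $a$-dependence in the final constant $C=C(a,\sigma,U)$ that the paper's $\lambda$ (which depends only on $\sigma,U$) does not by itself explain. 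So your plan is the paper's plan, with this small gap in the paper's sketch filled in.
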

\begin{proof}
	Going through the proof of Theorem 2 and Corollary 2 in \cite{Koltchinskiietal} line by line we see that we only need to bound the spectral norm of the matrix
	$$\Sigma:= \frac{1}{n} \left ( B_{ij} \epsilon_{ij} \right )_{i,j} $$
	by $\lambda/3$ with high probability. Using self-adjoint dilation to generalize Corollary 3.12 and Remark 3.13 in \cite{Bandeira} for rectangular matrices (with choices $\eps=1/2$, $\tilde{\sigma_*}=U$ and $$\tilde{\sigma}=\max \left ( \max_j \sqrt{ \sum_{i=1}^{m_1} \mathbb{E}_\sigma B_{ij}^2 \epsilon_{ij}^2 }, \max_i\sqrt{ \sum_{j=1}^{m_2} \mathbb{E}_\sigma B_{ij}^2 \epsilon_{ij}^2 } \right ) = \sigma\sqrt{ n/m}$$  there) we obtain 
	$$ \mathbb{P}_\sigma \left ( \left \| \sum_{i=1}^n \eps_{i}X_{i} \right \| > 3\sqrt{2} \sigma \sqrt{\frac{n}{m}} + t \right ) \leq d \exp \left (-\frac{-t^2}{C_1U^2} \right)$$
	for a constant $C_1 > 0$. Choosing $t=\sqrt{2C_1} U\sqrt{\frac{n}{m}}$ and using that $n \geq m \log(d)$ yields that $\Xi$ occurs with $\mathbb{P}_\sigma$-probability at least $1-1/d$.
	%	$$ \mathbb{P} \left ( \left \| \sum_{i,j} B_{ij}\eps_{ij}X_{ij} \right %\| > 3\sqrt{2} \sigma \sqrt{\frac{n}{m}} + t \right ) \leq d \exp \left %(-\frac{-t^2}{CU^2} \right)$$%
	%	for a constant $C > 0$. Choosing $t=\sqrt{2C} U\sqrt{\frac{n}{m}}$ and %using that $n \geq m \log(d)$ finishes the proof.
\end{proof}

\section*{Acknowledgements}  The work of A.~Carpentier is supported by the DFG’s Emmy Noether grant MuSyAD (CA 1488/1-1). The work of O. Klopp was conducted as part of the project Labex MME-DII (ANR11-LBX-0023-01). The work of M. L\"offler was supported by the UK Engineering and Physical Sciences Research Council (EPSRC) grant EP/L016516/1 and the European Research Council (ERC) grant No. 647812. The latter ERC grant also supported R. Nickl, who is further grateful to A. Tsybakov and ENSAE Paris for their hospitality during a visit in April 2016 where part of this research was undertaken.


\begin{thebibliography}{}
	
	\bibitem{Bandeira}
	A.~S. {Bandeira} and R.~{van Handel}. (2016).
	\newblock {Sharp nonasymptotic bounds on the norm of random matrices with
		independent entries}.
	\newblock{\em Ann. Probab.} \textbf{44}(4), 2479-2506
%	\MR{3531673}
	
	\bibitem{Baraud}
	Y.~{Baraud}. (2004).
	\newblock {Confidence balls in Gaussian regression.}
	\newblock {\em Ann. Statist.}, \textbf{32}(2):528-551.
%	\MR{2060168}
	
	\bibitem{BennetLanning}
	J.~{Bennett} and S.~{Lanning}. (2007).
	\newblock The netflix prize.
	\newblock {\em Proceedings of KDD Cup and Workshop}.
	
	\bibitem{BiswasLiangWangYe}
	P.~{Biswas}, T.~Liang, T.~Wang, and Y.~Ye. (2006).
	\newblock Semidefinite programming based algorithms for sensor network
	localization.
	\newblock {\em ACM Trans. Sen. Netw.}, \textbf{2}(2):188--220.
	
	\bibitem{Boucheronetal}
	S.~{Boucheron}, G. {Lugosi} and P. {Massart}. (2013).
	\newblock {\em Concentration inequalities.}
	\newblock Oxford University Press.
%	\MR{3185193}
	
	\bibitem{BullNickl}
	A.~D. {Bull} and R.~{Nickl}. (2013).
	\newblock Adaptive confidence sets in {$L^2$}.
	\newblock {\em Probab. Theory Related Fields}, \textbf{156}(3):889--919.
%	\MR{3078289}
	
	\bibitem{CaiLow2004}
	T.~T. {Cai} and M.~G. {Low}. (2004).
	\newblock An adaptation theory for nonparametric confidence intervals.
	\newblock {\em Ann. Statist.}, \textbf{32}(5):1805--1840.
%	\MR{2102494}
	
	
	\bibitem{CaiGuo}
	T.~T. {Cai} and Z.~{Guo}. (2016).
	\newblock Accuracy assessment for high-dimensional linear regression.
	\url{http://arxiv.org/abs/1603.03474}
	
	
	%\bibitem{CaiGenomics}
	%T.~{Cai}, T.~T.~{Cai} and A.~{Zhang} (2016).
	%\newblock Structured matrix completion with applications to genomic data %integration.
	%\url{https://arxiv.org/abs/1504.01823}
	\bibitem{CaiZhou}
	T.~T. {Cai} and W.~{Zhou}. (2016).
	\newblock Matrix completion via max-norm constrained optimization.
	\newblock {\em Electron. J. Statist.}, \textbf{10}(1):1493--1525.
%	\MR{3507371}
	
	\bibitem{CandesRecht}
	E.~J. Cand{\`e}s and B.~Recht. (2009).
	\newblock Exact matrix completion via convex optimization.
	\newblock {\em Found. Comput. Math.}, \textbf{9}(6):717--772. 
%	\MR{2565240}
	
	\bibitem{Candes_Tao}
	E.~J. Cand{\`e}s and T.~Tao. (2010).
	\newblock The power of convex relaxation: near-optimal matrix completion.
	\newblock {\em IEEE Trans. Inform. Theory}, \textbf{56}(5):2053--2080.
%	\MR{2723472}
	
	\bibitem{candes_plan_noise}
	Cand{\`e}s, E. J.  and Plan, Y. (2009).
	\newblock Matrix completion with noise.
	\newblock {\em Proceedings of IEEE}, \textbf{98(6)}, 925-936.
	
	\bibitem{candes_plan_tight}
	E.~J. Cand{\`{e}}s and Y.~ Plan. (2011).
	\newblock Tight oracle bounds for low-rank matrix recovery from a minimal
	number of random measurements.
	\newblock {\em IEEE Trans. Inform. Theory}, \textbf{57}(4):2342--2359.
%	\MR{2809094}
	
	
	
	\bibitem{Carpentier}
	A.~{Carpentier}. (2013).
	\newblock Honest and adaptive confidence sets in {$L^p$}.
	\newblock {\em Electron. J. Statist.}, \textbf{7}:2875--2923.
%	\MR{3148371}
	
	\bibitem{CEGN15}
	A.~{Carpentier}, J.~{Eisert}, D.~{Gross}, and R.~{Nickl}. (2015).
	\newblock {Uncertainty Quantification for Matrix Compressed Sensing and Quantum
		Tomography Problems}.
	\url{http://arxiv.org/abs/1504.03234}
	
	\bibitem{CarpentierNickl}
	A.~{Carpentier} and R.~Nickl. (2015).
	\newblock On signal detection and confidence sets for low rank inference
	problems.
	\newblock {\em Electron. J. Statist.}, \textbf{9}(2):2675--2688.
%	\MR{3432430}
	
	\bibitem{Chatterjee}
	S.~{Chatterjee}. (2015).
	\newblock Matrix estimation by universal singular value thresholding.
	\newblock {\em Ann. Statist.}, \textbf{43}(1):177--214.
%	\MR{3285604}
	
	
	\bibitem{Chi}
	E. C.~Chi, H. Zhou, G.K. Chen, D.O. Del Vecchyo and K. Lange. (2013). 
	\newblock Genotype imputation via matrix completion.
	\newblock{\em Genome Res.}, \textbf{23}(3):509–-518.
	
	
	\bibitem{GineNickl10}
	E.~{Gin\'{e}} and R.~{Nickl}. (2010).
	\newblock Confidence bands in density estimation.
	\newblock {\em Ann. Statist.}, \textbf{38}(2):1122--1170.
%	\MR{2604707}
	
	\bibitem{Ginenickl}
	E.~{Gin\'{e}} and R.~{Nickl}. (2016). 
	\newblock {\em Mathematical Foundations of Infinite-Dimensional Statistical
		Methods}.
	\newblock Cambridge University Press.
	
	\bibitem{Goldberg}
	D.~Goldberg, D.~Nichols, B.~M.~Oki, and D.~Terry. (1992).
	\newblock Using collaborative filtering to weave an information tapestry.
	\newblock {\em Commun. ACM}, \textbf{35}(12):61--70.
%	\MR{1279415}
	
	\bibitem{Gross}
	D.~Gross. (2011).
	\newblock Recovering low-rank matrices from few coefficients in any basis.
	\newblock {\em IEEE Trans. Inform. Theory}, \textbf{57}(3):1548--1566.
%	\MR{2815834}
	
	\bibitem{HoffmannNickl}
	M.~{Hoffmann} and R.~{Nickl}. (2011).
	\newblock On adaptive inference and confidence bands.
	\newblock {\em Ann. Statist.}, \textbf{39}(5):2383--2409.
%	\MR{2906872}
	
	%\bibitem{IngsterSuslina}
	%Y.~I.~{Ingster} and I.~A.~{Suslina}. (2003).
	%\newblock {\em Nonparametric Goodness-of-Fit Testing Under Gaussian %Models},
	%volume 169 of {\em Lecture Notes in Statistics}.
	%\newblock Springer, New York.
	%\MR{1991446}
	
	\bibitem{JainNetrapalliSanghavi}
	P.~{Jain}, P.~{Netrapalli} and S.~{Sanghavi}. (2013).
	\newblock 	Low-rank matrix completion using alternating minimization.
	\newblock {\em STOC'13-Proceedings of the 2013 ACM Symposium on Theory of Computing}, 665-674, ACM, New York
%	\MR{3210828}
	
	
	\bibitem{JuditskyLambert}
	A.~{Juditsky} and S.~Lambert-Lacroix. (2004).
	\newblock Nonparametric confidence set estimation.
	\newblock {\em Math. Methods Statist.}, \textbf{12}(4):410--428.
%	\MR{2054156}
	
	\bibitem{Keshavan}
	R.~H. Keshavan, A.~Montanari, and S.~Oh. (2010).
	\newblock Matrix completion from noisy entries.
	\newblock {\em J. Mach. Learn. Res.}, \textbf{11}:2057--2078.
%	\MR{2678022}
	
	\bibitem{KloppTrace}
	O.~{Klopp}. (2014).
	\newblock Noisy low-rank matrix completion with general sampling distribution. 
	\newblock {\em Bernoulli}, \textbf{20}(1):282--303.
%	\MR{3160583}
	
	\bibitem{KloppThresh}
	O.~Klopp. (2015).
	\newblock Matrix completion by singular value thresholding: sharp bounds.
	\newblock {\em Electron. J. Statist.}, \textbf{9}(2):2348--2369.
%	\MR{3417185}
	
	%\bibitem{koltchiskii-stflour}
	%V.~Koltchinskii. (2011).
	%\newblock {\em Oracle inequalities in empirical risk minimization and sparse
	%	recovery problems}, volume 2033 of {\em Lecture Notes in Mathematics}.
	%\newblock Springer, Heidelberg.
	%\MR{2829871}
	
	\bibitem{Koltchinskiietal}
	V.~{Koltchinskii}, K.~Lounici, and A.~B. Tsybakov. (2011).
	\newblock Nuclear-norm penalization and optimal rates for noisy low-rank matrix
	completion.
	\newblock {\em Ann. Statist.}, \textbf{39}(5):2302--2329.
%	\MR{2906869}
	
	%\bibitem{Ledoux_book}
	%M.~Ledoux. (2011).
	%\newblock {\em The concentration of measure phenomenon}, volume~89 of {\em
	%	Mathematical Surveys and Monographs}.
	%\newblock American Mathematical Society, Providence, RI.
	%\MR{1849347}
	
	\bibitem{Low1997}
	M.~G.~Low. (1997).
	\newblock On nonparametric confidence intervals.
	\newblock {\em Ann. Statist.}, \textbf{25}(6):2547--2554.
%	\MR{1604412}
	
	\bibitem{NegahbanWainwright}
	S.~{Negahban} and M.~J.~Wainwright. (2012).
	\newblock Restricted strong convexity and weighted matrix completion: Optimal
	bounds with noise.
	\newblock {\em J. Mach. Learn. Res.}, \textbf{13}:1665--1697.
%	\MR{2930649}
	
	\bibitem{SzaboNickl}
	R. {Nickl} and B. {Szab\'o} (2016).
	\newblock A sharp adaptive confidence ball for self-similar functions.
	\newblock { \em Stochastic Process. Appl.} \textbf{126}(12), 3913-3934. 
%	\MR{3565485}
	
	\bibitem{NicklVandegeer}
	R.~{Nickl} and S.~{van de Geer}. (2013).
	\newblock Confidence sets in sparse regression.
	\newblock {\em Ann. Statist.}, \textbf{41}(6):2852--2876.
%	\MR{3161450}
	
	\bibitem{recht}
	B.~{Recht}. (2011).
	\newblock A simpler approach to matrix completion.
	\newblock {\em J. Mach. Learn. Res.}, \textbf{12}:3413--3430.
%	\MR{2877360}
	
	\bibitem{RobinsVDV}
	J.~{Robins} and A.~W.~van der~Vaart. (2007).
	\newblock Adaptive nonparametric confidence sets.
	\newblock {\em Ann. Statist.}, \textbf{34}(1):229--253.
%	\MR{2275241}
	
	\bibitem{RohdeTsybakov}
	A.~Rohde and A.~Tsybakov. (2011).
	\newblock Estimation of high-dimensional low-rank matrices.
	\newblock {\em Ann. Statist.}, \textbf{39}(2):887--930.
%	\MR{2816342}
	
	%\bibitem{RudelsonVershynin}
	%M.~{Rudelson} and R.~Vershynin. (2009)
	%\newblock Smallest singular value of a random rectangular matrix.
	%\newblock {\em Comm. Pure Appl. Math.}, \textbf{62}(12):1707--1739.
	%\MR{2569075}
	
	\bibitem{Singer}
	A.~{Singer}. (2008).
	\newblock A remark on global positioning from local distances.
	\newblock {\em Proc. Natl. Acad. Sci. U.S.A.}, \textbf{105}(28):9507--9511.
%	\MR{2430205}
	
	\bibitem{Szaboetal}
	B. {Szab\'o}, A. {van der Vaart} and H. {van Zanten} (2015).
	\newblock Frequentist coverage of adaptive nonparametric Bayesian credible sets.
	\newblock {\em Ann. Statist.} \textbf{43}(4):1391--1428.
%	\MR{3357861}
	
	
	
	
	\bibitem{TalagrandInventiones}
	M.~{Talagrand}. (1996).
	\newblock New concentration inequalities in product spaces.
	\newblock {\em Invent. Math.}, \textbf{126}(3):505--563.
%	\MR{1419006}
	
	
	
	%\bibitem{wang_xu}
	%Y.~Wang and H.~Xu. (2012).
	%\newblock Stability of matrix factorization for collaborative filtering.
	%\newblock In {\em Proceedings of the 29th International Conference on %Machine
	%	Learning, {ICML} 2012, Edinburgh}.
	
\end{thebibliography}
\end{document}